\theoremstyle{plain}
\newtheorem{theorem}{Theorem}[section]
\newtheorem{lemma}[theorem]{Lemma}
\newtheorem{conjecture}[theorem]{Conjecture}
\theoremstyle{definition}
\newtheorem{definition}[theorem]{Definition}
\theoremstyle{remark}
\newtheorem{remark}[theorem]{Remark}
\numberwithin{equation}{section}
\begin{document}

\title{The Hilbert--Smith conjecture for three-manifolds}

\author{John Pardon}

\date{9 April 2012; Revised 22 January 2013}

\maketitle

\begin{abstract}
We show that every locally compact group which acts faithfully on a connected three-manifold is a Lie group.  By known reductions, it suffices to show that there is no faithful action of $\mathbb Z_p$ (the $p$-adic integers) on a connected three-manifold.  If $\mathbb Z_p$ acts faithfully on $M^3$, we find an interesting $\mathbb Z_p$-invariant open set $U\subseteq M$ with $H_2(U)=\mathbb Z$ and analyze the incompressible surfaces in $U$ representing a generator of $H_2(U)$.  It turns out that there must be one such incompressible surface, say $F$, whose isotopy class is fixed by $\mathbb Z_p$.  An analysis of the resulting homomorphism $\mathbb Z_p\to\operatorname{MCG}(F)$ gives the desired contradiction.  The approach is local on $M$.

MSC 2010 Primary: 57S10, 57M60, 20F34, 57S05, 57N10

MSC 2010 Secondary: 54H15, 55M35, 57S17

Keywords: transformation groups, Hilbert--Smith conjecture, Hilbert's Fifth Problem, three-manifolds, incompressible surfaces
\end{abstract}

\section{Introduction}

By a \emph{faithful action} of a topological group $G$ on a topological manifold $M$, we mean a continuous injection $G\to\operatorname{Homeo}(M)$ (where $\operatorname{Homeo}(M)$ has the compact-open topology).  A well-known problem is to characterize the locally compact topological groups which can act faithfully on a manifold.  In particular, there is the following conjecture, which is a natural generalization of Hilbert's Fifth Problem.

\begin{conjecture}[Hilbert--Smith]\label{hilbert}
If a locally compact topological group $G$ acts faithfully on some connected $n$-manifold $M$, then $G$ is a Lie group.
\end{conjecture}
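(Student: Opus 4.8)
The plan is to follow the standard reduction of Conjecture \ref{hilbert} to a statement about a single pro-$p$ group, and then to resolve that statement in the dimensions where the topology is tractable. First I would invoke the structure theory of locally compact groups. By the Gleason--Yamabe theorem, $G$ has an open subgroup $G'$ that is an extension of a Lie group by a compact group $K$, and $G$ is a Lie group if and only if $K$ is; since a compact group is an inverse limit of compact Lie groups $K=\varprojlim K/N_i$, if $G$ is not a Lie group this system does not stabilize, and a standard argument (Montgomery--Zippin) then produces a closed subgroup of $G$ isomorphic to the $p$-adic integers $\mathbb Z_p$ for some prime $p$. Restricting the given faithful action, we are reduced to showing that $\mathbb Z_p$ does not act faithfully on any connected $n$-manifold $M$ — the ``known reduction'' referred to in the abstract — and from here the problem is purely topological.

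Next I would assemble the constraints that any hypothetical faithful action $\mathbb Z_p\to\operatorname{Homeo}(M)$ must satisfy, independent of $n$. By Newman's theorem a nontrivial action of a finite group — hence of each finite quotient $\mathbb Z/p^k$ of $\mathbb Z_p$ — cannot fix a nonempty open set, so the fixed loci $M^{\mathbb Z/p^k}$ are nowhere dense; Smith theory moreover bounds their $\mathbb Z/p$-cohomological dimension by $n-2$. One then localizes: there is a point of $M$ and a connected $\mathbb Z_p$-invariant open neighborhood $U$ on which the action is still faithful, so the argument may be run locally on $M$. The deepest general input is C.\,T.\,Yang's computation of the cohomological dimension of the orbit space: were $\mathbb Z_p$ to act freely on $M^n$, the quotient $M/\mathbb Z_p$ would be a finite-dimensional space of cohomological dimension $n+2$ over $\mathbb Z$, which already contradicts $M/\mathbb Z_p$ being well-behaved when $n$ is small. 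The scheme, then, is: control the singular part of the action by Smith theory, and on the free part exploit the dimension jump of the orbit space.

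For $n\le 2$ this programme succeeds classically. For $n=3$ — the case treated here — the orbit-space estimate is not by itself strong enough, and the plan is the one outlined in the abstract: inside a suitable $\mathbb Z_p$-invariant open $U\subseteq M$ one arranges $H_2(U)=\mathbb Z$, studies the incompressible surfaces in $U$ representing a generator, shows the (finite, nonempty) set of their isotopy classes is permuted by $\mathbb Z_p$ and hence contains a class $[F]$ fixed by all of $\mathbb Z_p$, and analyzes the induced homomorphism $\mathbb Z_p\to\operatorname{MCG}(F)$. Since a continuous homomorphism from $\mathbb Z_p$ to a discrete group has open kernel $p^k\mathbb Z_p$ and therefore finite cyclic image, the action on $F$ up to isotopy is essentially that of some $\mathbb Z/p^k$; one then upgrades this to a genuine action of a copy of $\mathbb Z_p$ on $F$ itself (or on a subsurface of $U$ cut along $F$), and the $n=2$ case together with the Smith bound $\dim_{\mathbb Z/p}M^{\mathbb Z/p}\le 1 < \dim F$ produces the contradiction.

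The main obstacle is precisely the step the three-manifold techniques are built to bypass: in dimension $n\ge 4$ there is at present no replacement for incompressible-surface theory, and Yang's dimension estimate alone does not force a contradiction, so Conjecture \ref{hilbert} in full generality remains open — the plan above establishes it only for $n\le 3$. I would therefore expect the substantive content of the paper to be the $n=3$ analysis: constructing the open set $U$ with $H_2(U)=\mathbb Z$ $\mathbb Z_p$-equivariantly, proving finiteness and $\mathbb Z_p$-equivariance of the set of incompressible-surface isotopy classes in $U$, and establishing the rigidity of the homomorphism $\mathbb Z_p\to\operatorname{MCG}(F)$.
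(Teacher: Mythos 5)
Your first paragraph is exactly the paper's (standard) reduction: Gleason--Yamabe/Montgomery--Zippin reduce the conjecture to the nonexistence of a faithful $\mathbb Z_p$-action, and the paper's actual content is Theorem \ref{hs3}, the case $n=3$; the conjecture for $n\geq 4$ is indeed left open. The localization to a small Euclidean chart via Newman's theorem also matches Step 1 of the paper. But your sketch of the $n=3$ endgame contains two steps that would fail as stated. First, the set $\mathcal S(U)$ of isotopy classes of incompressible surfaces generating $H_2(U)$ is \emph{not} finite in general, so you cannot get a fixed class merely from ``a finite set permuted by $\mathbb Z_p$.'' What the paper proves (Lemmas \ref{posetcomp} and \ref{lattice}, using Freedman--Hass--Scott minimal surface theory) is that $\mathcal S(U)$ is a \emph{lattice} under a natural partial order; $\mathbb Z_p$ acts with finite orbits by continuity, and the least upper bound of an orbit is the fixed class $[F]$. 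This lattice structure is one of the paper's main new inputs and is not replaceable by a finiteness claim.

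Second, the step ``upgrade this to a genuine action of a copy of $\mathbb Z_p$ on $F$ itself \ldots and the $n=2$ case produces the contradiction'' is precisely the dimension-reduction the paper explains cannot be carried out: the orbit sets are potentially wild, $F$ is only an ``approximate boundary,'' and there is no natural action of $\mathbb Z_p$ on $F$ --- only a homomorphism $\mathbb Z_p\to\operatorname{MCG}(F)$ with finite cyclic $p$-group image. The paper's contradiction instead comes from the Nielsen classification of $\mathbb Z/p\subseteq\operatorname{MCG}(F)$ (Lemma \ref{homologyaction}): the intersection form on $H_1(F)^{\mathbb Z/p}$ mod $p$ has rank at most two. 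To violate this one needs the carefully engineered compact set $Z$ (orbit of a genus-two handlebody plus an orbit of a connecting arc, tamed by the plus operation), whose coarse genus-two shape forces a rank-four unimodular symplectic submodule of $H_1(F)^{\mathbb Z_p}$, and whose fine structure ($\mathbb Z_p$ acting nontrivially on $\check H^1(Z)$) forces the image in $\operatorname{MCG}(F)$ to be nontrivial. Your proposal omits this construction entirely, and without it neither of the two contradictory properties of the $\operatorname{MCG}(F)$-action is available. The appeal to Smith theory and to Yang's cohomological-dimension jump is likewise not how the paper proceeds; neither is used in the proof.
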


It is well-known (see, for example, Lee \cite{lee} or Tao \cite{taoblog}) that as a consequence of the solution of Hilbert's Fifth Problem (by Gleason \cite{gleason,gleason2}, Montgomery--Zippin \cite{montgomeryzippin,montgomeryzippin2}, as well as further work by Yamabe \cite{yamabe1,yamabe2}), any counterexample $G$ to Conjecture \ref{hilbert} must contain an embedded copy of $\mathbb Z_p$ (the $p$-adic integers).  Thus it is equivalent to consider the following conjecture.

\begin{conjecture}\label{hs}
There is no faithful action of $\mathbb Z_p$ on any connected $n$-manifold $M$.
\end{conjecture}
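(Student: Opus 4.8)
The plan is to prove Conjecture~\ref{hs} for $n=3$ (which, with the reduction recalled above, yields Conjecture~\ref{hilbert} for all three-manifolds), by deriving a contradiction from a hypothetical faithful action of $\mathbb{Z}_p$ on a connected three-manifold $M$. First I would normalize. Replacing $M$ by its orientation double cover and $\mathbb{Z}_p$ by an open subgroup (still $\cong\mathbb{Z}_p$), we may assume $M$ is orientable and the action orientation-preserving. By Smith theory for $p$-adic transformation groups (Smith, Yang, Bredon), the fixed set of each subgroup $p^k\mathbb{Z}_p\cong\mathbb{Z}_p$ is a $\mathbb{Z}_p$-cohomology submanifold of dimension $\le1$, so the locus of points with non-trivial stabilizer---a countable union of these---has covering dimension $\le1$; deleting it leaves a connected invariant open set on which the action is \emph{free}. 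Since everything below is local on $M$, we lose nothing by assuming the action free.

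The first substantial step is to construct a $\mathbb{Z}_p$-invariant connected open $U\subseteq M$ with $H_2(U;\mathbb{Z})=\mathbb{Z}$. Here I would exploit the ``dimension-raising'' phenomenon for $p$-adic actions: by Yang's theorem the orbit space of a free $\mathbb{Z}_p$-action on a three-manifold has integral \v{C}ech cohomological dimension exceeding $3$---ultimately because $B\mathbb{Z}_p$ has non-trivial integral cohomology in infinitely many degrees---so there is an invariant open $W\subseteq M$ with $\check H^k(W/\mathbb{Z}_p;\mathbb{Z})\ne0$ for some $k\ge4$. Pushing this non-vanishing downward through the spectral sequence of the Borel fibration $W\to W/\mathbb{Z}_p\to B\mathbb{Z}_p$ (whose fibre cohomology vanishes above degree $3$) and applying Poincar\'e--Lefschetz duality on the three-manifold $W$ turns it into a non-zero class in $H_2(W;\mathbb{Z})$ (passing to a finite cyclic cover if it first appears as torsion in $H_1$). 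Finally I would minimize: among invariant open subsets of $W$ with non-zero second homology there is an essentially minimal one, whose second homology is then forced to be exactly $\mathbb{Z}$, generated by the class of a connected closed orientable surface; discarding essential spheres, $U$ may be taken irreducible, so this generator is represented by an incompressible surface of genus $\ge1$.

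Next I would analyze the incompressible surfaces in $U$ representing a generator of $H_2(U)$ (necessarily non-separating, a generator of $\mathbb{Z}$ being primitive). Those of least genus form finitely many isotopy classes---this is where the careful choice of $U$ and $3$-manifold finiteness results (normal surface theory) enter---and $\mathbb{Z}_p$, being profinite, permutes this finite set through a finite $p$-group quotient; a short argument (counting modulo $p$, or producing a fixed simplex of an associated complex on which isotopies act) shows that the isotopy class of some such surface $F$ is fixed by all of $\mathbb{Z}_p$. Choosing for each $g\in\mathbb{Z}_p$ an ambient isotopy of $U$ carrying $g(F)$ back to $F$ and restricting the corrected homeomorphisms to $F$ yields a homomorphism $\rho\colon\mathbb{Z}_p\to\operatorname{MCG}(F)$, well defined since a homeomorphism of $U$ isotopic to the identity which preserves the incompressible surface $F$ restricts isotopically trivially to $F$ (the one exceptional case---$U$ fibred over $S^1$ with fibre $F$---reducing directly to one-dimensional Hilbert--Smith). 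Since $\rho$ is locally constant (by local contractibility of $\operatorname{Homeo}(U)$) and $\mathbb{Z}_p$ is compact, $\rho$ has finite image, so $\ker\rho=p^k\mathbb{Z}_p\cong\mathbb{Z}_p$ acts faithfully on $U$ while fixing the isotopy class of $F$ and acting trivially in $\operatorname{MCG}(F)$.

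It remains to derive a contradiction from this last configuration. Splitting $U$ along a representative of $F$---or passing to the infinite cyclic cover dual to $[F]$---and using that the surviving group $p^k\mathbb{Z}_p\cong\mathbb{Z}_p$ fixes $F$ up to isotopies trivial on $F$, one obtains a faithful $\mathbb{Z}_p$-action on a three-manifold that is strictly simpler than before: a suitable complexity (the length of a hierarchy of incompressible surfaces, or the genus of the relevant surface) has strictly decreased. Since this complexity cannot decrease indefinitely, and since the residual low-complexity cases reduce---via the finiteness of $\operatorname{MCG}(S^2)$ and the like---to faithful $\mathbb{Z}_p$-actions in dimensions $2$ and then $1$, which are known to be impossible, we reach the desired contradiction. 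The two genuinely hard points, I expect, are: (a) the Smith-theoretic construction of $U$, in particular pinning $H_2(U)$ down to exactly $\mathbb{Z}$ with an incompressible positive-genus generator; and, above all, (b) making the descent rigorous---showing that cutting along $F$ really yields a strictly simpler instance of the same problem, which requires controlling both the induced $p^k\mathbb{Z}_p$-action and the non-uniqueness of incompressible surfaces in $U$.
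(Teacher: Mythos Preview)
Your descent step does not work as written, and this is a genuine gap rather than a technicality. The kernel $p^k\mathbb Z_p$ acts on $U$, but it does \emph{not} preserve the surface $F$ as a subset---only its isotopy class. Consequently there is no induced $\mathbb Z_p$-action on $U$ split along $F$ or on the infinite cyclic cover dual to $[F]$. What you actually have, for each $g\in p^k\mathbb Z_p$, is a homeomorphism $\psi_g\circ g$ of $U$ preserving $F$ setwise (where $\psi_g$ is your correcting isotopy), but these do not compose to a group action. Straightening so that $F$ becomes honestly invariant is precisely the problem one cannot expect to solve: if one could, $\mathbb Z_p$ would act on the surface $F$ itself, contradicting the two-dimensional case immediately and rendering the rest of the argument unnecessary. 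So the induction never gets off the ground.

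The paper avoids this trap by extracting the contradiction directly from the single homomorphism $\mathbb Z_p\to\operatorname{MCG}(F)$, with no descent. The point is that the open set $U$ is not produced abstractly via Yang's theorem and a minimization, but is built by hand: one takes the $\mathbb Z_p$-orbit of an explicit genus-two handlebody $K_0$, attaches the orbit of a short arc chosen so that $\mathbb Z_p$ acts nontrivially on $\check H^1$ of the resulting compact set $Z$, and lets $U$ be a thin invariant shell around $Z$. This explicit construction buys two things simultaneously: the coarse handlebody shape forces $H_1(F)^{\mathbb Z_p}$ to contain a rank-four submodule on which the intersection form is $\left(\begin{smallmatrix}0&1\\-1&0\end{smallmatrix}\right)^{\oplus 2}$, while the nontrivial action on $\check H^1(Z)$ forces the image in $\operatorname{MCG}(F)$ to be nontrivial. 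A direct lemma (Nielsen's classification of $\mathbb Z/p$-subgroups of $\operatorname{MCG}(F)$) then shows these two facts are incompatible. Two further points where your outline diverges from what actually works: the fixed isotopy class is found not by finiteness of minimal-genus classes (which is delicate for the non-compact $U$ that arises here) but by proving that the relevant isotopy classes form a \emph{lattice}, so any finite orbit has a least upper bound; and the reduction to free actions, while perhaps arrangeable, plays no role and yields no simplification.
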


Conjecture \ref{hilbert} also admits the following reformulation, which we hope will help the reader better understand its flavor.

\begin{conjecture}\label{homeonscs}
Given a connected $n$-manifold $M$ with metric $d$ and open set $U\subseteq M$, there exists $\epsilon>0$ such that the subset:
\begin{equation}
\left\{\phi\in\operatorname{Homeo}(M)\bigm|d(x,\phi(x))<\epsilon\text{ for all }x\in U\right\}
\end{equation}
of $\operatorname{Homeo}(M)$ contains no nontrivial compact subgroup.
\end{conjecture}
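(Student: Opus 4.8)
The plan is to deduce Conjecture \ref{homeonscs} from Conjecture \ref{hilbert} (equivalently Conjecture \ref{hs}); for $n=3$ the latter is the main theorem of this paper, so Conjecture \ref{homeonscs} would then hold unconditionally in that dimension. The one nonelementary geometric ingredient is Newman's theorem: on a connected $n$-manifold with metric $d$ there is, for each relatively compact region, a constant $\nu>0$ such that no nontrivial finite group acts on the manifold with all of its orbits through that region of diameter less than $\nu$. Given $M$, $d$ and $U$, I would let $\nu$ be such a constant for a relatively compact neighborhood of $\overline U$ and claim that $\epsilon=\nu/2$ works. The verification splits according to whether a hypothetical small-displacement compact subgroup is a Lie group.

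So suppose $K\subseteq\operatorname{Homeo}(M)$ is a nontrivial compact subgroup with $d(x,\phi(x))<\epsilon$ for every $\phi\in K$ and every $x\in U$; then each $K$-orbit through a point of $U$ lies in an $\epsilon$-ball, hence has diameter less than $2\epsilon=\nu$. If $K$ is not a Lie group, then by the reduction recalled in the Introduction a locally compact group that fails to be a Lie group --- such as $K$ --- contains an embedded copy of $\mathbb Z_p$ for some prime $p$, and restricting the faithful action $K\hookrightarrow\operatorname{Homeo}(M)$ to that copy yields a faithful action of $\mathbb Z_p$ on the connected manifold $M$, contradicting Conjecture \ref{hs}; note this case uses only that $K$ is nontrivial and non-Lie, not the displacement bound. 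If instead $K$ is a nontrivial compact Lie group, then it contains a copy of $\mathbb Z/p$ for some prime $p$ --- inside a positive-dimensional maximal torus, or, when $K$ is finite, as an element of prime order --- and this $\mathbb Z/p$ acts on $M$ with every orbit through $U$ of diameter less than $\nu$, contradicting Newman's theorem. In either case no such $K$ exists, which is the assertion of Conjecture \ref{homeonscs}.

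The main obstacle, and the place where the statement needs to be read with some care, is the Lie case combined with noncompactness of $M$: on a noncompact $M$ one can construct a compactly supported circle action whose support is disjoint from a neighborhood of $\overline U$, and its image is a nontrivial compact subgroup fixing that neighborhood pointwise, hence lying in the displacement-$\epsilon$ set for every $\epsilon>0$, even though it has large orbits far from $U$. Thus Newman's theorem only bites once the displacement of each group element is controlled on all of $M$ --- equivalently, on a cofinal family of relatively compact sets $U$ --- which is the sense in which the hypothesis should be understood; with that reading the definite-size $\mathbb Z/p$-orbits forced by Newman's theorem are genuinely excluded. Past this point the argument is soft: the reduction to $\mathbb Z_p$ in the non-Lie case is immediate from the cited structure theory, the extraction of $\mathbb Z/p$ in the Lie case is elementary group theory, and the remaining work --- localizing Newman's theorem, whose proof is in any event local (it bounds from below the codimension of the fixed-point set of a $\mathbb Z/p$-action against the near-surjectivity of the quotient map forced by small orbits) --- is classical.
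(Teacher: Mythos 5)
The paper offers no proof of this statement: it is stated as a conjecture, one of several reformulations of Conjecture \ref{hilbert} asserted to be equivalent without argument, and it becomes a theorem only for $n\le 3$ via Theorem \ref{hs3} together with the known cases $n=1,2$. So the comparison is against the standard argument that your first two paragraphs reconstruct, and that part is essentially right: a nontrivial compact subgroup $K$ of the displacement set is either non-Lie, in which case the Gleason--Montgomery--Zippin--Yamabe structure theory recalled in the introduction produces an embedded $\mathbb Z_p\subseteq K$ acting faithfully and contradicts Conjecture \ref{hs}, or Lie, in which case it contains an element $g$ of prime order all of whose orbits through $U$ lie in $\epsilon$-balls; quantitative Newman, applied to the $\langle g\rangle$-invariant domain swept out by a chart ball inside $U$, then forces $g$ to fix a nonempty open set, whence $g=\operatorname{id}$ by the qualitative form of Newman (the fixed-point set of a nontrivial periodic homeomorphism of a connected manifold has empty interior). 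One correction: the region to which you apply Newman should be a relatively compact chart ball \emph{inside} $U$, where you actually have the orbit bound, not ``a relatively compact neighborhood of $\overline U$,'' which need not exist when $U$ is not relatively compact and on which the hypothesis controls nothing.

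The genuine error is your third paragraph. There is no nontrivial compactly supported circle action on a connected manifold, and more generally no nontrivial compact subgroup of $\operatorname{Homeo}(M)$ that fixes a nonempty open set pointwise: if such a $K$ were Lie it would contain a nontrivial periodic homeomorphism whose fixed set has nonempty interior, contradicting Newman, and if it were non-Lie it would contain $\mathbb Z_p$, contradicting Conjecture \ref{hs}. (What does exist is a compactly supported flow, i.e.\ an $\mathbb R$-action; its image in $\operatorname{Homeo}(M)$ is not compact, nor is its closure.) So the ``obstacle'' you describe is vacuous, and the reinterpretation you propose --- demanding the displacement bound on all of $M$ --- proves a strictly weaker statement than the one asserted. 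Worse, that weaker statement could not play the role the paper needs: the implication from Conjecture \ref{homeonscs} back to Conjecture \ref{hilbert} uses that the small subgroups $p^k\mathbb Z_p$ converge to the identity only in the compact-open topology, which gives displacement control on a relatively compact $U$ but never on all of a noncompact $M$. Your own two-case argument, run on a chart ball inside $U$, already proves the conjecture exactly as written (granting Conjecture \ref{hs} for $M$), so the third paragraph should simply be deleted.
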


For any specific manifold $M$, Conjectures \ref{hilbert}--\ref{homeonscs} for $M$ are equivalent.  They also have the following simple consequence for \emph{almost periodic} homeomorphisms of $M$ (a homeomorphism is said to be almost periodic if and only if the subgroup of $\operatorname{Homeo}(M)$ it generates has compact closure; see Gottschalk \cite{gottschalk} for other equivalent definitions).

\begin{conjecture}\label{alp}
For every almost periodic homeomorphism $f$ of a connected $n$-manifold $M$, there exists $r>0$ such that $f^r$ is in the image of some homomorphism $\mathbb R\to\operatorname{Homeo}(M)$.
\end{conjecture}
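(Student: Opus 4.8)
The plan is to deduce Conjecture \ref{alp} for a fixed manifold $M$ from Conjecture \ref{hilbert} for $M$ (so that, by the main theorem, it holds unconditionally when $\dim M=3$). Let $f\in\operatorname{Homeo}(M)$ be almost periodic, and let $G=\overline{\langle f\rangle}\subseteq\operatorname{Homeo}(M)$ be the closure of the cyclic subgroup it generates; by hypothesis $G$ is compact. Since $\langle f\rangle$ is abelian and the set of commuting pairs in a topological group is closed, $G$ is a compact abelian group, and it acts faithfully on $M$ by restriction of the inclusion $G\hookrightarrow\operatorname{Homeo}(M)$. By Conjecture \ref{hilbert} applied to this action, $G$ is a Lie group; hence $G$ is a compact abelian Lie group.

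Next I would invoke the structure of such groups: the identity component $G^0$ is a torus $\mathbb{T}^k$ for some $k\ge 0$, and the component group $G/G^0$ is finite. Moreover $G$ is monothetic (topologically generated by $f$), so $G/G^0$, being a finite discrete group topologically generated by the image of $f$, is in fact generated by that image, hence cyclic. Put $n=\lvert G/G^0\rvert$ and set $r=n$. Then the image of $f^r$ in $G/G^0$ is trivial, i.e.\ $f^r\in G^0=\mathbb{T}^k$.

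Finally, since $\mathbb{T}^k$ is a connected abelian Lie group, its exponential map $\exp\colon\mathbb{R}^k\to\mathbb{T}^k$ is surjective; choose $v\in\mathbb{R}^k$ with $\exp(v)=f^r$. Then $t\mapsto\exp(tv)$ is a continuous homomorphism $\mathbb{R}\to\mathbb{T}^k\hookrightarrow G\hookrightarrow\operatorname{Homeo}(M)$ (the inclusions being continuous because $G$ carries the subspace topology from $\operatorname{Homeo}(M)$), and its value at $t=1$ is $f^r$. This is the desired homomorphism $\mathbb{R}\to\operatorname{Homeo}(M)$ whose image contains $f^r$.

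The argument is essentially routine structure theory of compact abelian Lie groups once Conjecture \ref{hilbert} is granted, so I do not expect a serious obstacle. The only points needing care are that $G$ is a Lie group in its given topology rather than merely abstractly isomorphic to one, and that this Lie-group topology agrees with the compact-open topology inherited from $\operatorname{Homeo}(M)$; both are part of what the solution of Hilbert's Fifth Problem provides for the compact group $G$, and they are exactly what is needed to ensure the one-parameter subgroup constructed above is continuous as a map into $\operatorname{Homeo}(M)$.
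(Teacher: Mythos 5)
Your proposal is correct and is exactly the deduction the paper has in mind: it presents Conjecture \ref{alp} as a ``simple consequence'' of Conjecture \ref{hilbert} for the given $M$ without writing out the details, and your argument (pass to the compact abelian group $G=\overline{\langle f\rangle}$, apply Conjecture \ref{hilbert} to conclude $G$ is a compact abelian Lie group, kill the finite cyclic component group by taking $r=\lvert G/G^0\rvert$, and use surjectivity of $\exp$ on the torus $G^0$) supplies them correctly. Your closing remark about the Lie topology agreeing with the subspace topology from $\operatorname{Homeo}(M)$ is the right point to flag, and it is handled exactly as you say.
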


Conjecture \ref{hilbert} is known in the cases $n=1,2$ (see Montgomery--Zippin \cite[pp233,249]{montgomeryzippin2}).  By consideration of $M\times\mathbb R$, clearly Conjecture \ref{hilbert} in dimension $n$ implies the same in all lower dimensions.

The most popular approach to the conjectures above is via Conjecture \ref{hs}.  Yang \cite{ctyang} showed that for any counterexample to Conjecture \ref{hs}, the orbit space $M/\mathbb Z_p$ must have cohomological dimension $n+2$.  Conjecture \ref{hs} has been established for various regularity classes of actions, $C^2$ actions by Bochner--Montgomery \cite{bochnermontgomery}, $C^{0,1}$ actions by Repov{\u{s}}--{\u{S}}{\u{c}}epin \cite{hslipschitz}, $C^{0,\frac n{n+2}+\epsilon}$ actions by Maleshich \cite{hsholder}, quasiconformal actions by Martin \cite{hsqc}, and uniformly quasisymmetric actions on doubling Ahlfors regular compact metric measure manifolds with Hausdorff dimension in $[1,n+2)$ by Mj \cite{mj}.  In the negative direction, it is known by work of Walsh \cite[p282 Corollary 5.15.1]{walsh} that there \emph{does} exist a continuous decomposition of any compact PL $n$-manifold into cantor sets of arbitrarily small diameter if $n\geq 3$ (see also Wilson \cite[Theorem 3]{wilson}).  By work of Raymond--Williams \cite{raymondwilliams}, there are faithful actions of $\mathbb Z_p$ on $n$-dimensional compact metric spaces which achieve the cohomological dimension jump of Yang \cite{ctyang} for every $n\geq 2$.

In this paper, we establish the aforementioned conjectures for $n=3$.

\begin{theorem}\label{hs3}
There is no faithful action of $\mathbb Z_p$ on any connected three-manifold $M$.
\end{theorem}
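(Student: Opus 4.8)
The plan is to argue by contradiction: suppose $\mathbb Z_p$ acts faithfully on a connected three-manifold $M$. Since the action is faithful but $\mathbb Z_p$ is not a Lie group, the action cannot be ``small'' everywhere, so one first localizes. The orbit space $M/\mathbb Z_p$ has covering dimension $\leq 5$ (by Yang's result it is exactly $n+2 = 5$ at points where the action is nontrivial), while $M$ is only $3$-dimensional, so there is a great deal of ``room'' hidden in the orbits. I would begin by choosing a point $x_0\in M$ at which the action is nontrivial, pass to a small invariant neighborhood, and — using the structure of $\mathbb Z_p$ as an inverse limit $\varprojlim \mathbb Z/p^k$ together with the Newman-type theorems bounding fixed-point sets of finite $p$-group actions on manifolds — arrange that the orbit $\mathbb Z_p\cdot x_0$ is a Cantor set sitting inside a ball. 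The key geometric input should be to produce a $\mathbb Z_p$-invariant open set $U\subseteq M$ with $H_2(U)=\mathbb Z$: intuitively, $U$ is a neighborhood of the Cantor orbit whose ``boundary at infinity'' inside $M$ carries a nontrivial $2$-cycle, and invariance comes from averaging the construction over the group (or, better, from the canonical nature of the construction applied to an invariant Cantor set).

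Next I would study the incompressible surfaces in $U$ representing a generator of $H_2(U)=\mathbb Z$. By standard $3$-manifold topology (compressing a closed surface representing a primitive class, plus a minimal-genus/Haken-type argument), such incompressible surfaces exist; one must further control their number up to isotopy. The heart of the matter is to show that, because $\mathbb Z_p$ is a \emph{pro-$p$} group and hence every finite quotient has $p$-power order, the finite set of isotopy classes of such incompressible surfaces (or a suitably canonical subcollection, e.g.\ the ones of minimal complexity) is permuted by $\mathbb Z_p$ through a finite quotient, and a counting argument modulo $p$ forces a fixed point: there is an incompressible surface $F\subseteq U$ whose isotopy class is $\mathbb Z_p$-invariant. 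This is where I expect the main obstacle to lie — one needs the collection of relevant surfaces to be genuinely finite and canonically defined, and one needs the $p$-adic valuation bookkeeping to be airtight, since if the orbit of isotopy classes had size divisible by $p^\infty$ the argument would collapse; handling non-compactness of $U$ and the behavior of incompressible surfaces near the ``ends'' of $U$ is the delicate technical point.

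Having fixed $F$ up to isotopy, the action of $\mathbb Z_p$ descends to a homomorphism $\rho\colon\mathbb Z_p\to\operatorname{MCG}(F)$, where $F$ is a closed (or bounded) surface, possibly after passing to a finite-index subgroup of $\mathbb Z_p$ (which is again isomorphic to $\mathbb Z_p$, so harmless). Now $\operatorname{MCG}(F)$ is residually finite and, crucially, is a \emph{virtually torsion-free} group that is also known to be ``$p$-small'' in the relevant sense: it contains no copy of $\mathbb Z_p$, because $\mathbb Z_p$ is not topologically finitely generated-as-a-discrete-group image and, more to the point, a continuous injection $\mathbb Z_p\to\operatorname{MCG}(F)$ would have to have image a compact totally disconnected subgroup, whereas $\operatorname{MCG}(F)$ (being residually finite and, after passing to a torsion-free finite-index subgroup, containing no nontrivial finite subgroups) has no nontrivial compact subgroups at all when given the discrete topology — any such would be finite, contradicting injectivity of $\mathbb Z_p\to\operatorname{MCG}(F)$. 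Thus $\rho$ has nontrivial kernel; but the kernel acts trivially on the isotopy class of $F$, and by peeling $F$ out of $U$ (cutting along $F$) and inducting on the complexity of the pieces — or by directly invoking that an action trivial on a separating incompressible surface must preserve each side and hence be small — one reduces to a faithful $\mathbb Z_p$-action on a simpler three-manifold, and ultimately to a contradiction. I would carry out this reduction carefully, tracking that at each stage faithfulness is preserved and the complexity strictly drops, so the induction terminates; assembling these pieces yields Theorem~\ref{hs3}.
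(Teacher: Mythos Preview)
Your proposal shares the broad architecture of the paper's proof --- localize, build a $\mathbb Z_p$-invariant $U$ with $H_2(U)=\mathbb Z$, find a $\mathbb Z_p$-fixed incompressible surface $F$, and study the resulting map $\mathbb Z_p\to\operatorname{MCG}(F)$ --- but the final step, where you actually extract a contradiction, has a genuine gap that I do not see how to repair.

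You correctly observe that $\rho:\mathbb Z_p\to\operatorname{MCG}(F)$ must have finite image (hence nontrivial kernel). But this alone is no contradiction: $\mathbb Z_p$ has plenty of finite $p$-group quotients. Your plan is then to pass to the kernel $p^k\mathbb Z_p\cong\mathbb Z_p$, ``peel off $F$,'' and induct on complexity. This does not work. The kernel only fixes the \emph{isotopy class} of $F$, not $F$ itself, so there is no $\mathbb Z_p$-invariant surface to cut along; and even if there were, the resulting pieces are noncompact open three-manifolds on which no obvious complexity has decreased --- the original action is still faithful on all of $M$, and nothing about the pieces of $U\setminus F$ forces a descent. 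There is no hierarchy here that terminates.

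The paper's endgame is entirely different and depends on a much more specific construction of $U$ than ``a neighborhood of a Cantor orbit.'' One builds an invariant compact set $Z$ that (a) coarsely resembles a genus-two handlebody and (b) carries a \emph{nontrivial} $\mathbb Z_p$-action on $\check H^1(Z)$; then $U$ is roughly $N_\epsilon(Z)\setminus Z$. Property (b) forces the image of $\mathbb Z_p\to\operatorname{MCG}(F)$ to be \emph{nontrivial}, while property (a) produces four explicit classes in $H_1(F)^{\mathbb Z_p}$ on which the intersection form is $\left(\begin{smallmatrix}0&1\\-1&0\end{smallmatrix}\right)^{\oplus 2}$. These two facts are then shown to be incompatible via a classification (after Nielsen) of $\mathbb Z/p$-subgroups of $\operatorname{MCG}(F)$: for any such subgroup, the intersection form on $H_1(F)^{\mathbb Z/p}$ reduced mod $p$ has rank at most two. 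This is the actual contradiction, and it requires both the careful design of $Z$ and a surface-group-theoretic lemma that your outline does not anticipate.

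A secondary issue: your ``counting mod $p$'' argument for producing a fixed incompressible surface presumes finiteness of the relevant set of isotopy classes, which is false in general for open three-manifolds. The paper instead proves that the poset $\mathcal S(U)$ of such isotopy classes is a \emph{lattice}; since $\mathbb Z_p$ acts with finite orbits, the least upper bound of any orbit is a fixed point. You flagged this step as the likely obstacle, but it is in fact the endgame above that is the essential missing idea.
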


\subsection{Rough outline of the proof of Theorem \ref{hs3}}\label{shortproof}

We suppose the existence of a continuous injection $\mathbb Z_p\to\operatorname{Homeo}(M)$ and derive a contradiction.

Since $p^k\mathbb Z_p\cong\mathbb Z_p$, we may replace $\mathbb Z_p$ with one of its subgroups $p^k\mathbb Z_p$ for any large $k\geq 0$.  The subgroups $p^k\mathbb Z_p\subseteq\mathbb Z_p$ form a neighborhood base of the identity in $\mathbb Z_p$; hence by continuity of the action, as $k\to\infty$ these subgroups converge to the identity map on $M$.  By picking a suitable Euclidean chart of $M$ and a suitably large $k\geq 0$, we reduce to the case where $M$ is an open subset of $\mathbb R^3$ and the action of $\mathbb Z_p$ is very close to the identity.\footnote{There are two motivations for this reduction (which is valid in any dimension).  First, recall that a topological group is NSS (``has no small subgroups'') iff there exists an open neighborhood of the identity which contains no nontrivial subgroup.  Then a theorem of Yamabe \cite[p364 Theorem 3]{yamabe2} says that a locally compact topological group is a Lie group iff it is NSS.  Thus the relevant property of $\mathbb Z_p$ which distinguishes it from a Lie group is the existence of the small subgroups $p^k\mathbb Z_p$.  Second, recall Newman's theorem \cite{newman} which implies that a compact Lie group acting nontrivially on a manifold must have large orbits.  In essence, we are extending Newman's theorem to the group $\mathbb Z_p$ (however the proof will be quite different).}

The next step is to produce a compact connected $\mathbb Z_p$-invariant subset $Z\subseteq M$ satisfying the following two properties:\footnote{The motivation to consider such a set is to attempt a \emph{dimension reduction} argument.  In other words, we would like to conclude that $\partial Z$ is a closed surface with a faithful action of $\mathbb Z_p$, and therefore contradicts the (known) case of Conjecture \ref{hs} with $n=2$.  This, of course, is not possible since $Z$ could \emph{a priori} have wild boundary.  We will nevertheless be able to construct a closed surface $F$ which serves as an ``approximate boundary'' of $Z$.}
\begin{enumerate}
\item On a coarse scale, $Z$ looks like a handlebody of genus two.
\item The action of $\mathbb Z_p$ on $H^1(Z)$ is nontrivial.
\end{enumerate}
The eventual contradiction will come by combining the first (coarse) property of $Z$ with the second (fine) property of $Z$.  Constructing such a set $Z$ follows a natural strategy: we take the orbit of a closed handlebody of genus two and attach the orbit of a small loop connecting two points on the boundary.  However, the construction requires checking certain connectedness properties of a number of different orbit sets, and is currently the least transparent part of the proof.

Now we consider an open set $U$ defined roughly as $N_\epsilon(Z)\setminus Z$ (only roughly, since we need to ensure that $U$ is $\mathbb Z_p$-invariant and $H_2(U)=\mathbb Z$).  The set of isotopy classes of incompressible surfaces in $U$ representing a generator of $H_2(U)$ forms a lattice, and this lets us find an incompressible surface $F$ in $U$ which is fixed up to isotopy by $\mathbb Z_p$.  We think of the surface $F$ as a sort of ``approximate boundary'' of $Z$.  Even though $\mathbb Z_p$ does not act naturally on $F$ itself, we do get a natural homomorphism $\mathbb Z_p\to\operatorname{MCG}(F)$ with finite image.  The two properties of $Z$ translate into the following two properties of the action of $\mathbb Z_p$ on $H_1(F)$:
\begin{enumerate}
\item There is a submodule of $H_1(F)$ fixed by $\mathbb Z_p$ on which the intersection form is:
\begin{equation}\label{rankfour}
\left(\begin{matrix}0&1\cr-1&0\end{matrix}\right)\oplus\left(\begin{matrix}0&1\cr-1&0\end{matrix}\right)
\end{equation}
\item The action of $\mathbb Z_p$ on $H_1(F)$ is nontrivial.
\end{enumerate}
This means we have a cyclic subgroup $\mathbb Z/p\subseteq\operatorname{MCG}(F)$ such that $H_1(F)^{\mathbb Z/p}$ has a submodule on which the intersection form is given by (\ref{rankfour}).  The Nielsen classification of cyclic subgroups of the mapping class group shows that this is impossible.  This contradiction completes the proof of Theorem \ref{hs3}.

We conclude this introduction with a few additional remarks on the proof.  First, the proof is a local argument on $M$, similar in that respect to the proof of Newman's theorem \cite{newman}.  Second, our proof works essentially verbatim with any pro-finite group in place of $\mathbb Z_p$ (though this is not particularly surprising).  Finally, we remark that assuming the action of $\mathbb Z_p$ on $M$ is \emph{free} (as is traditional in some approaches to Conjecture \ref{hs}) does not produce any significant simplifications to the argument.

\subsection{Acknowledgements}

We thank Ian Agol for suggesting Lemmas \ref{posetcomp} and \ref{lattice} concerning lattice properties of incompressible surfaces.  We also thank Mike Freedman and Steve Kerckhoff for helpful conversations.  We thank the referee for their comments and for giving this paper a very close reading.

The author was partially supported by a National Science Foundation Graduate Research Fellowship under grant number DGE--1147470.

\section{The lattice of incompressible surfaces}\label{incompressiblesection}

In this section, we study a natural partial order on the set of incompressible surfaces in a particularly nice class of three-manifolds; we call such three-manifolds ``quasicylinders'' (see Definition \ref{approxsurf}).  For a quasicylinder $M$, we let $\mathcal S(M)$ denote the set of isotopy classes of incompressible surfaces in $M$ which generate $H_2(M)$.  The main result of this section (suggested by Ian Agol \cite{mathoverflow}) is that $\mathcal S(M)$ is a lattice (see Lemma \ref{lattice}) under its natural partial order.  Related ideas may be found in papers of Schultens \cite{schultens} and Przytycki--Schultens \cite{przytyckischultens} on the contractibility of the Kakimizu complex \cite{kakimizu}.\footnote{The Kakimizu complex is a sort of ``$\mathbb Z$-equivariant order complex'' of $\mathcal S(\widetilde{\mathbb S^3\setminus K})$, where $\widetilde{\mathbb S^3\setminus K}$ denotes the infinite cyclic cover of the knot complement $\mathbb S^3\setminus K$.  Even though technically $\widetilde{\mathbb S^3\setminus K}$ is not a quasicylinder under our definition, it is easy to give a modified definition (allowing manifolds with boundary) to which the methods of this section would apply.}

Since we will ultimately use the results of this section to study groups of \emph{homeomorphisms}, we need constructions which are functorial with respect to homeomorphisms.  To study the properties of these constructions, however, we use methods and results in PL/DIFF three-manifold theory (for example, general position).  Thus in this section we will always state explicitly which category (TOP/PL/DIFF) we are working in, since this will change frequently (and there is no straightforward way of working just in a single category).  We will, of course, need the key result that every topological three-manifold can be triangulated, as proved by Moise \cite{moise} and Bing \cite{bing} (see also Hamilton \cite{hamilton} for a modern proof based on the methods of Kirby--Siebenmann \cite{kirsie}; the PL structure is unique, but we do not need this).

By surface, we always mean a closed connected orientable surface.  By isotopy (resp.\ PL isotopy) of surfaces, we always mean ambient isotopy through homeomorphisms (resp.\ PL homeomorphisms).

\begin{theorem}[{\cite[p62 Theorem 8]{bing}}]\label{bingplapproxthm}
Let $M_1,M_2$ be two PL three-manifolds where $M_2$ has a metric $d$.  Let $\phi:M_1\to M_2$ be a homeomorphism, and let $f:M_1\to\mathbb R_{>0}$ be continuous.  Then there exists a PL homeomorphism $\phi_1:M_1\to M_2$ with $d(\phi(x),\phi_1(x))\leq f(x)$.
\end{theorem}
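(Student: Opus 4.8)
The plan is to follow Bing's method: assemble $\phi_1$ from local pieces over a sufficiently fine triangulation of $M_1$, the key input being that surfaces in a PL three-manifold admit arbitrarily close PL approximations. First I would fix PL atlases on $M_1$ and $M_2$ and choose a (locally finite) triangulation $T$ of $M_1$ fine enough that, for each simplex $\sigma$, the closed star $\operatorname{st}(\sigma,T)$ lies in a PL chart of $M_1$ whose $\phi$-image lies in a PL chart of $M_2$, and fine enough that $\operatorname{diam}\phi(\operatorname{st}(\sigma,T))$ is small compared to the infimum of $f$ on $\operatorname{st}(\sigma,T)$. The problem then reduces to a controlled local statement in $\mathbb R^3$: given open $U\subseteq\mathbb R^3$, a compact polyhedron $P\subseteq U$, an embedding $\psi:U\hookrightarrow\mathbb R^3$ already PL near a subpolyhedron $P_0\subseteq P$, and $\varepsilon>0$, produce an embedding $\psi'$ that is PL near $P$, equals $\psi$ where $\psi$ was already PL, and has $|\psi'-\psi|<\varepsilon$.

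I would prove the local statement by straightening $\phi$ over the cells of $T$ in order of increasing dimension. Straightening over vertices is vacuous; over each edge it amounts to replacing an embedded arc by a PL arc close to it with the same endpoints, which is immediate after a small perturbation. The substance is at the two- and three-cells. When a two-cell $f$ is reached its boundary has already been straightened, so $\phi(\partial f)$ is a PL circle; I would then invoke Bing's surface/side-approximation theorem --- this is the step where $\dim M_i=3$ is essential --- to replace the topological disk $\phi(f)$ by a polyhedral disk with the same (already PL) boundary and within $\varepsilon$ of $\phi(f)$, correspondingly redefining $\phi$ on $f$. Once all two-cells are straightened, for each three-cell $\Delta$ the sphere $\phi(\partial\Delta)$ is polyhedral; by the PL Schoenflies theorem it bounds a PL three-ball $B$ on the appropriate side in the target chart, and since a PL homeomorphism of a two-sphere extends to a PL homeomorphism of the three-ball, I redefine $\phi$ on the interior of $\Delta$ to be such an extension of its (already PL) boundary values. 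At each stage the modification is supported in an arbitrarily small neighborhood of the cell and leaves $\phi$ unchanged where it was already PL, so the pieces glue.

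Finally I would run this induction over all of $T$, choosing each surface-approximation small enough that the total displacement at each point $x$ stays below $f(x)$ --- legitimate because only finitely many cells of the locally finite $T$ influence any given point and each contributes an arbitrarily small amount. The resulting $\phi_1$ is PL on all of $M_1$ and within $f$ of $\phi$; being a sufficiently close approximation to the homeomorphism $\phi$ between manifolds, it is a local homeomorphism that is injective, proper, and onto $M_2$, hence a PL homeomorphism. The genuine obstacle --- and the only point using three-dimensionality --- is the surface-approximation step: that the homeomorphic image in a PL three-manifold of a polyhedral disk (or two-sphere) can be pushed, by an arbitrarily small perturbation fixing it where it is already polyhedral, to a polyhedral one. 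This is Bing's theorem, proved by intricate ``push the sphere off itself'' arguments with partial polyhedral replacements and careful control of the singular set; the rest of the argument is routine bookkeeping once it is granted. (A modern alternative, following Hamilton and Kirby--Siebenmann, substitutes the three-dimensional handle-smoothing theory --- at bottom the torus trick together with three-manifold topology --- but retains the same architecture of local models glued over a fine triangulation with error control.)
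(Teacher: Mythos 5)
This statement is not proved in the paper at all: it is quoted verbatim as Bing's approximation theorem, with the citation \cite[p62 Theorem 8]{bing} serving as the entire ``proof.'' So there is no argument of the paper's to compare yours against; the only question is whether your sketch is a sound account of how the cited result is actually established. In outline it is: the standard Bing--Moise proof does proceed by choosing a fine triangulation, straightening $\phi$ skeleton by skeleton, invoking the polyhedral approximation of disks and spheres in a $3$-manifold at the $2$-cells, and filling the $3$-cells via the PL Schoenflies theorem and the Alexander coning trick. You have correctly isolated the one step that uses $\dim=3$ essentially.

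Two caveats. First, your proposal is not self-contained in the way a proof should be: the side-approximation/surface-approximation theorem that you defer to ``Bing's theorem'' \emph{is} the mathematical content of the result being proved; everything else is, as you say, bookkeeping. Quoting it is no more informative than quoting Theorem 8 itself, so as a blind proof attempt this reduces the statement to an equally hard black box. Second, two of the bookkeeping steps are glossed in a way that hides real work: (a) the images $\phi(e)$ of edges can be wild arcs, and redefining $\phi$ near the $1$-skeleton so that it remains a global embedding after the arcs are replaced by PL ones is not ``immediate after a small perturbation''---one must keep the new arcs, and later the new disks over distinct cells, pairwise disjoint except along shared faces; and (b) your final claim that $\phi_1$ is injective ``being a sufficiently close approximation to the homeomorphism $\phi$'' is false as a general principle ($C^0$-closeness to a homeomorphism does not imply injectivity); injectivity must instead be extracted from the cell-by-cell construction, namely from the fact that the straightened $2$-spheres bound disjoint PL balls that tile $M_2$. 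With those points made explicit, your sketch is a faithful description of the proof in \cite{bing}.
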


\begin{lemma}\label{straightening}
Let $M$ be a PL three-manifold, and let $F$ be a bicollared surface in $M$.  Then there exists an isotopy of $M$ supported in an arbitrarily small neighborhood of $F$ which maps $F$ to a PL surface.
\end{lemma}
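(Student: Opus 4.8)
The plan is to use the PL approximation theorem (Theorem~\ref{bingplapproxthm}) together with the bicollar structure on $F$ to build the desired isotopy by hand inside the collar, rather than trying to approximate the embedding of $F$ globally. First I would fix a bicollar, that is a PL embedding $c:F\times[-1,1]\to M$ (where here $F$ carries some PL structure as an abstract surface --- it need not agree with anything in $M$) which is a homeomorphism onto a neighborhood of $F=c(F\times\{0\})$; such a collar exists because $F$ is bicollared and $M$ is a PL manifold (one first gets a topological bicollar, then improves it, or one simply works with the given bicollared embedding and composes). Let $N=c(F\times(-1,1))$ be the open collar neighborhood; by shrinking we may assume $N$ is contained in the prescribed arbitrarily small neighborhood of $F$.

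The key point is that $F\times\{0\}\subseteq F\times[-1,1]$ is already PL in the product PL structure, so the failure of $F$ to be PL in $M$ is entirely measured by the homeomorphism $c$, which we now want to replace by a PL homeomorphism without moving $\partial N$ and without moving points far. Concretely I would apply Theorem~\ref{bingplapproxthm} to approximate $c|_{F\times[-1/2,1/2]}$ (or a slightly reparametrized collar) by a PL homeomorphism $c_1:F\times[-1/2,1/2]\to c(F\times[-1/2,1/2])$ which is $f$-close to $c$ for a function $f$ that tends to $0$ near the ends $F\times\{\pm 1/2\}$; then $c_1$ extends by $c$ outside to give a homeomorphism of $M$ which is PL on the relevant region and is the identity outside $N$. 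The surface $c_1(F\times\{0\})$ is then PL in $M$. The isotopy from $F$ to this PL surface is obtained by observing that $c$ and $c_1$ are homeomorphisms which agree near $\partial(F\times[-1/2,1/2])$ and are close, hence are isotopic rel boundary (via the straight-line homotopy in a Euclidean chart, or more cleanly via the Alexander-type trick: push along the collar direction), and this isotopy is supported in $N$.

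The hard part --- and the step requiring the most care --- is the interpolation between $c$ and $c_1$: I need a genuine \emph{isotopy} of $M$ (a path in $\operatorname{Homeo}(M)$), supported in a small neighborhood of $F$, carrying $F$ to $c_1(F\times\{0\})$, not merely the existence of two embeddings with PL image. The clean way to do this is to choose $f$ small enough that $c_1\circ c^{-1}$, viewed on the compact set $c(F\times[-1/2,1/2])$, is uniformly close to the identity and agrees with the identity near the frontier; then use a local-Euclidean straightening (cover by finitely many PL charts and use that a homeomorphism of $\mathbb R^3$ sufficiently close to the identity and equal to it outside a compact set is isotopic to the identity rel complement, e.g.\ by the Alexander isotopy after first making it the identity on a slightly larger set) to produce the ambient isotopy, which by construction is supported in $N$. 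One should double check that the "$f\to 0$ near the ends'' hypothesis in Theorem~\ref{bingplapproxthm} can be arranged so the approximation patches continuously to the identity; this is routine since $f$ is only required to be continuous and positive. Finally, by taking $N$ small at the outset, the entire isotopy is supported in the prescribed neighborhood of $F$, as required.
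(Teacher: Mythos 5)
Your setup (use the bicollar plus Theorem \ref{bingplapproxthm} to make the collar map PL, with the approximation tending to the identity near the frontier so it patches with the original collar) is the right idea and matches the paper's starting point. But the step you yourself flag as ``the hard part'' is a genuine gap, not a routine verification. After producing $c_1$, you need an ambient isotopy carrying $F=c(F\times\{0\})$ to $c_1(F\times\{0\})$, and you propose to get it from the fact that $c_1\circ c^{-1}$ is close to the identity and supported in the thickened surface $c(F\times[-1/2,1/2])$. The Alexander isotopy only handles a homeomorphism supported in a (round or PL) ball; to reduce to that case you would have to factor a near-identity homeomorphism of $M$ into finitely many homeomorphisms each supported in a small chart, and ``first making it the identity on a slightly larger set.'' That factorization/taming step is precisely the content of the \v Cernavski\u\i--Edwards--Kirby local contractibility theorem for homeomorphism groups --- a deep result, not something the continuity hypothesis on $f$ in Theorem \ref{bingplapproxthm} hands you. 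As written, the isotopy from $c$ to $c_1$ is not constructed.

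The paper sidesteps this entirely by approximating the collar on \emph{one side only}: apply Theorem \ref{bingplapproxthm} to $\phi|_{F\times(0,1)}$ with $f\to 0$ at both ends of $(0,1)$, so the resulting $\phi_1$ extends continuously and agrees with $\phi$ on $F\times\{0,1\}$; splicing $\phi_1|_{F\times[0,1]}$ with $\phi|_{F\times[-1,0]}$ gives a new bicollar $\phi_2$ that still has $F$ as its zero level (since $\phi_2=\phi$ on $F\times\{0\}$) and is PL on $F\times(0,1)$. The required isotopy is then just the obvious slide along the collar coordinate, $\phi_2(x,t)\mapsto\phi_2(x,h_s(t))$, carrying $F$ to the PL surface $\phi_2(F\times\{\tfrac12\})$; no ``near-identity homeomorphisms are isotopic to the identity'' input is needed. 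In your version the spliced map no longer has $F$ as a level surface (because $c_1$ and $c$ disagree on $F\times\{0\}$), which is exactly why you are forced into the hard interpolation. If you modify your construction to approximate on only one open side of the collar, your argument closes up and becomes the paper's proof.
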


\begin{proof}
Let $\phi:F\times[-1,1]\to M$ be a bicollar, which we may assume is arbitrarily close to $F=\phi(F\times\{0\})$.  Now pick a PL structure on $F$ and apply Theorem \ref{bingplapproxthm} to $\phi|_{F\times(0,1)}$ and a function $f$ which decreases sufficiently rapidly near the ends of $(0,1)$.  The resulting $\phi_1$ then extends continuously to $F\times[0,1]$ and agrees with $\phi$ on $F\times\{0,1\}$.  Now splice $\phi_1|_{F\times[0,1]}$ and $\phi|_{F\times[-1,0]}$ together to get a bicollar $\phi_2:F\times[-1,1]\to M$ which is PL on $F\times(0,1)$.  Now using the bicollar $\phi_2$ we can easily construct an isotopy of $M$ which sends $F=\phi_2(F\times\{0\})$ to $\phi_2(F\times\{\frac 12\})$, which is PL.
\end{proof}

\begin{lemma}\label{homotopicisotopic}
Let $M$ be an irreducible orientable TOP (resp.\ PL) three-manifold, and let $F_1,F_2$ be two bicollared (resp.\ PL) $\pi_1$-injective surfaces in $M$.  If $F_1,F_2$ are homotopic, then there is a compactly supported (resp.\ PL) isotopy of $M$ which sends $F_1$ to $F_2$.
\end{lemma}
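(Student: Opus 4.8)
The plan is to reduce the TOP statement to the PL statement, and to deduce the PL statement — after passing to a compact piece of $M$ — from Waldhausen's theorem that two homotopic two-sided incompressible surfaces in a Haken $3$-manifold are ambient isotopic. (Since $F_1$ and $F_2$ are homotopic they lie in a single component of $M$, which we take to be all of $M$; and the case in which the $F_i$ are $2$-spheres, to which Waldhausen's theorem does not apply, is immediate because any two embedded $2$-spheres in a connected irreducible $3$-manifold are isotopic. So we may assume the $F_i$ have genus $\geq 1$.)

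For the reduction of TOP to PL, triangulate $M$ (Moise \cite{moise}, Bing \cite{bing}), so that we may take $M$ to be PL; this PL manifold is again irreducible, since a PL $2$-sphere $S$ in $M$ bounds a topological $3$-ball, and cutting $M$ along $S$ realizes the closure of that ball as a compact contractible PL $3$-manifold with $2$-sphere boundary, which is a PL $3$-ball (by the $3$-dimensional Poincaré conjecture). Apply Lemma \ref{straightening} to $F_1$ to get a compactly supported TOP isotopy of $M$ carrying $F_1$ to a PL surface $F_1'$, and then apply Lemma \ref{straightening} again to the image of $F_2$ to carry it to a PL surface $F_2'$. It does not matter that the second isotopy may ruin the PL-ness of $F_1'$: at this stage $F_1'$ and $F_2'$ are both PL surfaces in the PL manifold $M$, are still orientable and $\pi_1$-injective (isotopy-invariant conditions), and are mutually homotopic, since $F_1'\simeq F_1\simeq F_2\simeq F_2'$ with the outer equivalences isotopies and the middle one the given homotopy. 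Applying the PL statement to $F_1'$ and $F_2'$ produces a compactly supported PL — hence TOP — isotopy from $F_1'$ to $F_2'$, and composing it with the two straightening isotopies and their inverses gives the required TOP isotopy from $F_1$ to $F_2$.

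For the PL statement, note that $F_1$, $F_2$ and the homotopy between them have compact image in $M$; enlarging (by a standard argument) any compact PL submanifold that contains this image, we may choose a compact irreducible PL submanifold $N\subseteq M$ whose interior contains $F_1$, $F_2$, and the image of the homotopy. Then $F_1$ and $F_2$ are still $\pi_1$-injective in $N$ — the composite $\pi_1F_i\to\pi_1N\to\pi_1M$ is injective — hence incompressible in $N$; they are homotopic within $N$; and $N$, being compact, orientable, irreducible, and containing the incompressible surface $F_1$, is a Haken manifold. Waldhausen's isotopy theorem now yields a PL ambient isotopy $\{\psi_t\}$ of $N$ with $\psi_0=\operatorname{id}$ and $\psi_1(F_1)=F_2$. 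Since an ambient isotopy of $N$ preserves $\operatorname{int}N$, the track $\bigcup_t\psi_t(F_1)$ is a compact subset of $\operatorname{int}N$, so by the PL isotopy extension theorem the same motion of surfaces is realized by a PL ambient isotopy of $M$ supported in a compact subset of $\operatorname{int}N$, which is the isotopy we want.

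The one genuinely nontrivial ingredient is Waldhausen's isotopy theorem, which I am content to cite (see also Hempel's book on $3$-manifolds); everything else is bookkeeping. Of the bookkeeping, the most delicate step is the extraction of the compact irreducible submanifold $N$ — essentially the assertion that irreducibility is a local property — while the TOP-to-PL reduction becomes routine once one notices that the two applications of Lemma \ref{straightening} need not be performed simultaneously.
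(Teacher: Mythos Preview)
Your proof is correct and follows essentially the same route as the paper: cite Waldhausen for the compact PL case, reduce the general PL case to this by passing to a compact irreducible PL submanifold containing the surfaces and the homotopy, and reduce TOP to PL by triangulating $M$ and straightening the surfaces via Lemma~\ref{straightening}. You are more careful than the paper in several places---separating out the $2$-sphere case, explicitly invoking isotopy extension to promote the isotopy of $N$ to one of $M$, and checking that the chosen PL structure is still irreducible---but these are refinements of the same argument, not a different one. One small comment: appealing to the $3$-dimensional Poincar\'e conjecture to see that a topological $3$-ball with its induced PL structure is a PL $3$-ball is heavier than needed; the uniqueness of PL structures on $3$-manifolds (Moise) already gives this directly.
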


\begin{proof}
Waldhausen \cite[p76 Corollary 5.5]{waldhausen} proves this in the PL category if $M$ is compact with boundary.  It is clear that this implies our PL statement, since the given homotopy will be supported in a compact region of $M$, which is in turn contained in a compact irreducible submanifold with boundary.

For the TOP category, first pick a PL structure on $M$, and use Lemma \ref{straightening} to straighten $F_1,F_2$ by a compactly supported isotopy.  Now use the PL version of this lemma.
\end{proof}

\begin{definition}\label{approxsurf}
A \emph{quasicylinder} is an irreducible orientable three-manifold $M$ with exactly two ends and $H_2(M)\cong\mathbb Z$.  For example, $\Sigma_g\times\mathbb R$ is a quasicylinder for $g\geq 1$.
\end{definition}

\begin{lemma}
Let $M$ be a quasicylinder.  For an embedded surface $F\subseteq M$, the following are equivalent:\\
\indent 1.\ $F$ is nonzero in $H_2(M)$.\\
\indent 2.\ $F$ separates the two ends of $M$.\\
\indent 3.\ $F$ generates $H_2(M)$.
\end{lemma}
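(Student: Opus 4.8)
The plan is to reduce all three conditions to a single integer, namely the intersection number of $F$ with a proper arc joining the two ends of $M$. Since $M$ is connected with two ends, there is a properly embedded line $\rho\colon\mathbb R\to M$ whose two ends run out to the two distinct ends of $M$; triangulating $M$ (Moise--Bing) we may take $\rho$ to be PL, and we may take the given surface to be PL after straightening it by Lemma \ref{straightening} (a closed orientable surface in the orientable manifold $M$ is two-sided, hence bicollared). For a PL surface $G$ transverse to $\rho$, let $\Phi_\rho(G)\in\mathbb Z$ be the signed count of intersection points of $G$ with $\rho$. The first, routine, point is that $\Phi_\rho$ descends to a homomorphism $H_2(M)\to\mathbb Z$: given $G,G'$ with $[G]=[G']$, choose a compact connected PL submanifold-with-boundary $N\subseteq M$ (with $\rho$ transverse to $\partial N$) large enough to contain $G$, $G'$, and a $3$-chain realizing the relation; then $\Phi_\rho(G)$ equals the intersection number of $[G]\in H_2(N)$ with $[\rho\cap N]\in H_1(N,\partial N)$ computed via Poincar\'e--Lefschetz duality in $N$, which depends only on $[G]\in H_2(N)$, hence only on $[G]\in H_2(M)$; additivity is clear.

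The geometric heart of the argument is an analysis of how a connected two-sided surface $G$ meets such an arc. Every component of $M\setminus G$ has frontier meeting $G$ (otherwise it is open and closed in the connected manifold $M$), hence contains one of the two sides of a bicollar of $G$; therefore $M\setminus G$ has at most two components. If moreover $G$ separates the two ends of $M$, then $M\setminus G$ has exactly two components $M^+$ and $M^-$, one containing each end, with $G$ as their common frontier. Now any proper arc $\alpha$ joining the two ends of $M$ and transverse to $G$ begins in $M^+$ and ends in $M^-$; each crossing of $G$ exchanges $M^+$ and $M^-$, so consecutive crossings have opposite signs, and since the arc begins and ends on opposite sides the number of crossings is odd; hence $\Phi_\alpha(G)=\pm1$ for every such $\alpha$. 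Such a surface $G$ exists: take a compact connected PL submanifold-with-boundary $N_0\subseteq M$ whose complement is two noncompact pieces $W_1\sqcup W_2$, and tube together the components of $\partial W_1$ along disjoint arcs in the connected set $W_1$; this leaves the number of ends of $M$ unchanged (each $W_i$ remains a connected complementary component with noncompact closure, and $M$ has only two ends), and the resulting boundary surface $Q$ facing $W_1$ is connected and separates the two ends of $M$. Consequently $\Phi_\alpha(Q)=\pm1$, so $\Phi_\alpha\colon H_2(M)=\mathbb Z\to\mathbb Z$ is surjective, hence an isomorphism, for \emph{every} proper arc $\alpha$ joining the two ends of $M$.

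Granting that each $\Phi_\alpha$ is an isomorphism, the equivalences follow quickly. The implication (3)$\Rightarrow$(1) is immediate. For (2)$\Rightarrow$(3): if $F$ separates the two ends, the crossing computation gives $\Phi_\alpha(F)=\pm1$ for any proper PL arc $\alpha$ joining the ends and transverse to $F$, and since $\Phi_\alpha$ is an isomorphism of $H_2(M)=\mathbb Z$ with $\mathbb Z$, this forces $[F]$ to generate $H_2(M)$. For (1)$\Rightarrow$(2) I argue by contraposition: if $F$ does not separate the two ends, both ends lie in a single component $C$ of $M\setminus F$; choosing small end-neighborhoods of the two ends inside $C$ (they are disjoint from the compact set $F$) and joining them by a path in $C$ gives a proper arc $\alpha'$ joining the two ends with $\alpha'\cap F=\emptyset$, so $\Phi_{\alpha'}(F)=0$; since $\Phi_{\alpha'}$ is injective (it still sends $[Q]$ to $\pm1$), we get $[F]=0$. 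Chaining (1)$\Rightarrow$(2)$\Rightarrow$(3)$\Rightarrow$(1) gives all three equivalences.

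I expect the difficulties here to be bookkeeping rather than conceptual. The two points that need genuine care are (i) the well-definedness and additivity of the intersection pairing $\Phi_\rho$ on the noncompact manifold $M$, which I handle above by passing to a large compact submanifold-with-boundary and invoking Poincar\'e--Lefschetz duality there; and (ii) the sign-alternation step, which depends on the combinatorial fact that a connected two-sided surface has at most two complementary regions, together with the construction of a \emph{connected} separating surface $Q$ by tubing that does not disturb the end structure of $M$. This argument does not use the irreducibility hypothesis in Definition \ref{approxsurf}; the only substantive external inputs are triangulability of three-manifolds and Lemma \ref{straightening}.
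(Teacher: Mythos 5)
Your proof is correct and rests on the same underlying mechanism as the paper's: pairing $H_2(M)$ against a proper arc running between the two ends of $M$. The difference is one of packaging. The paper phrases this as Poincar\'e duality $H_1^{\operatorname{lf}}(M)\cong H^2(M)$ on the open manifold and simply asserts that the arc's locally finite class is non-torsion, from which (1)$\Rightarrow$(2) follows, while (2)$\Rightarrow$(3) comes from the one-line observation that a surface separating the ends admits an arc meeting it exactly once. You instead construct the pairing $\Phi_\rho$ by hand, verifying well-definedness on a compact exhaustion via Poincar\'e--Lefschetz duality, replace the ``meets it exactly once'' observation by the sign-alternation count $\Phi_\alpha(G)=\pm1$ for any connected separating $G$, and you justify the non-torsion (i.e.\ surjectivity) claim explicitly by tubing the boundary components of a large compact piece into a connected surface $Q$ separating the ends. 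Your route is more elementary --- it avoids locally finite homology entirely --- and it fills in the one step the paper leaves implicit (why the arc's class pairs onto $\mathbb Z$), at the cost of more bookkeeping; both arguments correctly make no use of the irreducibility hypothesis.
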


\begin{proof}
(1)$\implies$(2).  A path from one end to the other gives a non-torsion class in $H_1^{\operatorname{lf}}(M)$.  Thus its Poincar\'e dual is a non-torsion class in $H^2(M)$, and thus defines a nonzero map $H_2(M)\to\mathbb Z$.  Since $F$ is nonzero in $H_2(M)\cong\mathbb Z$, every such path must therefore intersect $F$.

(2)$\implies$(3).  If $F$ separates the two ends, then there is a path from one end to the other which intersects $F$ exactly once.  Thus the Poincar\'e dual of the class of this path in $H_1^{\operatorname{lf}}(M)$ evaluates to $1$ on $F\in H_2(M)$.  Thus $F$ represents a primitive element of $H_2(M)\cong\mathbb Z$, and thus generates it.

(3)$\implies$(1).  Trivial.
\end{proof}

\begin{definition}
For a TOP quasicylinder $M$, let $\mathcal S_{\operatorname{TOP}}(M)$ be the set of bicollared $\pi_1$-injective embedded surfaces in $M$ generating $H_2(M)$, modulo homotopy.
\end{definition}

\begin{definition}
For a PL quasicylinder $M$, let $\mathcal S_{\operatorname{PL}}(M)$ be the set of PL $\pi_1$-injective embedded surfaces in $M$ generating $H_2(M)$, modulo homotopy.
\end{definition}

\begin{remark}\label{nonempty}
$\mathcal S_{\operatorname{PL}}(M)$ is always nonempty, since we can pick a PL embedded surface representing a generator of $H_2(M)$ and then take some maximal compression thereof (as in the proof of Lemma \ref{maximalcompression} below), which will be $\pi_1$-injective by the loop theorem.
\end{remark}

\begin{definition}
A \emph{directed quasicylinder} is a quasicylinder along with a labelling of the ends with $\pm$.  For an embedded surface $F\subseteq M$ separating the two ends, let $(M\setminus F)_\pm$ denote the two connected components of $M\setminus F$ (the subscripts corresponding to the labelling of the ends).  It is easy to see that $(M\setminus F)_\pm$ are both (directed) quasicylinders.
\end{definition}

\begin{definition}
Let $M$ be a directed quasicylinder.  For two embedded surfaces $F_1,F_2\subseteq M$ separating the two ends of $M$, we say $F_1\leq F_2$ iff $F_1\subseteq(M\setminus F_2)_-$.  Define a relation $\leq$ on $\mathcal S_{\operatorname{TOP}}(M)$ (resp.\ $\mathcal S_{\operatorname{PL}}(M)$) by declaring that $\mathfrak F_1\leq\mathfrak F_2$ if and only if $\mathfrak F_1,\mathfrak F_2$ have embedded representatives $F_1,F_2$ with $F_1\leq F_2$.
\end{definition}

\begin{lemma}\label{pltopequiv}
Let $M$ be a PL directed quasicylinder.  Then the natural map $\psi:\mathcal S_{\operatorname{PL}}(M)\to\mathcal S_{\operatorname{TOP}}(M)$ is a bijection satisfying $\mathfrak F_1\leq\mathfrak F_2\iff\psi(\mathfrak F_1)\leq\psi(\mathfrak F_2)$.
\end{lemma}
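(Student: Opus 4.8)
The plan is to verify the three assertions in turn: that $\psi$ is a well-defined map of sets, that it is a bijection, and that it reflects (hence preserves) the partial order. Well-definedness and injectivity are essentially formal. Any PL embedded surface in the PL manifold $M$ is locally flat, hence bicollared, and being $\pi_1$-injective and generating $H_2(M)$ are intrinsic properties unaffected by forgetting the PL structure; moreover ``modulo homotopy'' denotes the same (topological) equivalence relation on both sides. Hence $\psi$ is well defined, and it is injective because if two PL surfaces have homotopic images under $\psi$ they were already homotopic in $M$, so they are equal in $\mathcal S_{\operatorname{PL}}(M)$. (If desired one can upgrade such a homotopy to a PL isotopy using Lemma \ref{homotopicisotopic}, but this is not needed here.)

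For surjectivity I would take a bicollared $\pi_1$-injective surface $F\subseteq M$ generating $H_2(M)$ and apply Lemma \ref{straightening} to produce an ambient isotopy of $M$, supported near $F$, carrying $F$ onto a PL surface $F'$. Since $F'$ is ambient isotopic to $F$ it is homotopic to $F$, hence still $\pi_1$-injective and (because an ambient isotopy acts trivially on $H_2(M)$) still generating $H_2(M)$; thus $F'\in\mathcal S_{\operatorname{PL}}(M)$ and $\psi([F'])=[F]$.

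The forward implication $\mathfrak F_1\leq\mathfrak F_2\implies\psi(\mathfrak F_1)\leq\psi(\mathfrak F_2)$ is immediate, since a pair of PL representatives realizing $F_1\leq F_2$ is in particular a pair of bicollared representatives. The content is the converse. Assume $\psi(\mathfrak F_1)\leq\psi(\mathfrak F_2)$, witnessed by bicollared representatives with $F_1\subseteq(M\setminus F_2)_-$; note $F_1$ and $F_2$ are then disjoint compact surfaces, hence at positive distance. First straighten $F_2$: applying Lemma \ref{straightening} with support in a neighborhood of $F_2$ small enough to miss $F_1$ yields a compactly supported ambient isotopy $\phi$ of $M$ with $\phi(F_2)=F_2'$ PL and $\phi|_{F_1}=\operatorname{id}$. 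A compactly supported ambient isotopy fixes the two ends of $M$, so it carries $(M\setminus F_2)_-$ onto $(M\setminus F_2')_-$; hence $F_1=\phi(F_1)\subseteq(M\setminus F_2')_-$. Now $(M\setminus F_2')_-$ is an open subset of the PL manifold $M$, hence itself a PL three-manifold, and $F_1$ is a bicollared surface inside it; a second application of Lemma \ref{straightening}, performed inside $(M\setminus F_2')_-$ and extended by the identity over $M$, carries $F_1$ onto a PL surface $F_1'\subseteq(M\setminus F_2')_-$, i.e.\ $F_1'\leq F_2'$. Finally $F_1',F_2'$ are PL and ambient isotopic in $M$ (hence homotopic) to $F_1,F_2$, which represent $\psi(\mathfrak F_1),\psi(\mathfrak F_2)$; comparing with PL representatives of $\mathfrak F_1,\mathfrak F_2$ and using that homotopy is the equivalence relation defining $\mathcal S_{\operatorname{PL}}(M)$ gives $[F_1']=\mathfrak F_1$ and $[F_2']=\mathfrak F_2$, so $\mathfrak F_1\leq\mathfrak F_2$.

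The only real obstacle is this last converse: one must straighten the two surfaces in an order that preserves the nesting $F_1\subseteq(M\setminus F_2)_-$. Straightening $F_2$ first, using the positive separation of the disjoint compact surfaces to keep the support away from $F_1$, and only then straightening $F_1$ relative to the now-PL surface $F_2'$ inside its negative side, is the crucial maneuver. Everything else reduces to the standard facts that ambient isotopy preserves homotopy class, $\pi_1$-injectivity, and the $H_2$-class, and that compactly supported ambient isotopies preserve the labelling of the ends.
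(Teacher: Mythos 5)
Your proof is correct and takes essentially the same approach as the paper: injectivity is formal, surjectivity follows from Lemma \ref{straightening}, and the order relation is handled by straightening the nested bicollared surfaces using isotopies with small disjoint supports. The paper merely asserts that the two surfaces ``can be straightened preserving the relation $F_1\leq F_2$''; your explicit two-step maneuver (straighten $F_2$ with support missing $F_1$, then straighten $F_1$ inside $(M\setminus F_2')_-$) is exactly the intended justification of that sentence.
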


\begin{proof}
The natural map $(\mathcal S_{\operatorname{PL}}(M),\leq)\to(\mathcal S_{\operatorname{TOP}}(M),\leq)$ is clearly injective, and by Lemma \ref{straightening} it is surjective.

If $\mathfrak F_1,\mathfrak F_2$ have bicollared representatives $F_1,F_2$ with $F_1\leq F_2$, then by Lemma \ref{straightening} they can be straightened preserving the relation $F_1\leq F_2$.  The other direction is obvious, since PL surfaces have a bicollar.
\end{proof}

Having established that $\mathcal S_{\operatorname{PL}}(M)$ and $\mathcal S_{\operatorname{TOP}}(M)$ are naturally isomorphic, we henceforth use the notation $\mathcal S(M)$ for both.

\begin{lemma}\label{arcconst}
Let $M$ be a PL quasicylinder.  Suppose $F$ is a PL embedded surface in $M$ separating the two ends of $M$, and suppose $\gamma$ is a PL arc from one end of $M$ to the other which intersects $F$ transversally in exactly one point.  Denote by $\pi_1(M,\gamma)$ one of the groups $\{\pi_1(M,p)\}_{p\in\gamma}$ (they are all \emph{naturally} isomorphic given the path $\gamma$).\footnote{A categorical way of phrasing this is as follows.  Let $\boldsymbol\gamma$ denote the category whose objects are points $p\in\gamma$, with a single morphism $p\to p'$ for all $p,p'\in\gamma$.  The fundamental group is a functor $\pi_1(M,\cdot):\boldsymbol\gamma\to\mathfrak G\mathfrak r\mathfrak o\mathfrak u\mathfrak p\mathfrak s$, where the morphism $p\to p'$ is sent to the isomorphism $\pi_1(M,p)\to\pi_1(M,p')$ given by the path from $p$ to $p'$ along $\gamma$.  Then $\pi_1(M,\gamma)$ is defined as the limit/colimit of this functor.}  Then for any surface $G\subseteq M$ homotopic to $F$, there is a canonical map $\pi_1(G)\to\pi_1(M,\gamma)$ (defined up to inner automorphism of the domain).
\end{lemma}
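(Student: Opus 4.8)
The plan is to produce the map from the auxiliary choice of an ambient isotopy carrying $F$ to $G$, and then to show that the result is independent of that choice up to inner automorphisms of the domain.

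First note that $G$ is $\pi_1$-injective and separates the two ends of $M$: these depend only on the homotopy class of $G\hookrightarrow M$, which by hypothesis agrees with that of $F\hookrightarrow M$ after precomposition with a homeomorphism $F\to G$. Since $M$ is irreducible, Lemma~\ref{homotopicisotopic} supplies an ambient isotopy $\Phi_t$ of $M$ with $\Phi_0=\operatorname{id}_M$ and $\Phi_1(F)=G$. Let $p_0$ be the unique point of $\gamma\cap F$, put $q_0:=\Phi_1(p_0)\in G$, and let $\delta$ be the path $t\mapsto\Phi_t(p_0)$ from $p_0$ to $q_0$. Because $p_0\in\gamma$, the group $\pi_1(M,p_0)$ is canonically $\pi_1(M,\gamma)$, so we may set
\[
\Theta_\Phi\colon\pi_1(G,q_0)\xrightarrow{\ \iota_*\ }\pi_1(M,q_0)\xrightarrow{\ \delta_*^{-1}\ }\pi_1(M,p_0)=\pi_1(M,\gamma),
\]
equivalently $\Theta_\Phi=(\iota_F)_*\circ\bigl((\Phi_1|_F)_*\bigr)^{-1}$. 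Forgetting the basepoint $q_0$ turns this into a map $\pi_1(G)\to\pi_1(M,\gamma)$ that is well-defined up to inner automorphism of the domain.

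It remains to show $\Theta_\Phi$ does not depend on $\Phi$ modulo such inner automorphisms. Given a second isotopy $\Phi'$ with $\Phi'_1(F)=G$, the homeomorphism $\rho:=\Phi'_1\circ\Phi_1^{-1}$ of $M$ is isotopic to $\operatorname{id}_M$ and satisfies $\rho(G)=G$; chasing definitions (using $\Phi'_1|_F=(\rho|_G)\circ(\Phi_1|_F)$) gives $\Theta_{\Phi'}=\Theta_\Phi\circ\bigl((\rho|_G)_*\bigr)^{-1}$. So it suffices to prove that $\rho|_G\colon G\to G$ is freely homotopic to $\operatorname{id}_G$: then $(\rho|_G)_*$ differs from a change-of-basepoint isomorphism of $\pi_1(G)$ along a path in $G$ by an inner automorphism, and the two maps agree in the required sense. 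Since $G$ has positive genus — an incompressible sphere in the irreducible $M$ would bound a ball and could not generate $H_2(M)$ — it is aspherical, so $\rho|_G\simeq\operatorname{id}_G$ is equivalent to triviality of the induced outer automorphism of $\pi_1(G)$. From $\iota_G\circ(\rho|_G)=\rho\circ\iota_G$ and the fact that $\rho\simeq\operatorname{id}_M$ makes $\rho_\#$ an inner automorphism of $\pi_1(M)$, one finds that the outer automorphism of $\pi_1(G)$ induced by $\rho|_G$ is conjugation by an element $m\in\pi_1(M)$ normalizing $A:=(\iota_G)_*\pi_1(G)$. Thus everything reduces to the group-theoretic statement
\[
N_{\pi_1(M)}(A)=A\cdot C_{\pi_1(M)}(A).
\]

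I expect this identity to be the main obstacle; it is the assertion that the normalizer of an incompressible surface subgroup acts on it only by inner automorphisms, and it uses that $M$ is a quasicylinder in an essential way. To prove it I would suppose $m\in N_{\pi_1(M)}(A)$ with $c_m|_A$ not inner in $A$; multiplying $m$ by an element of $A$, either $m$ has infinite order modulo $A$ or it represents a nontrivial torsion element of $N_{\pi_1(M)}(A)/A$. In the first case $\langle A,m\rangle\cong\pi_1(\Sigma_g)\rtimes\mathbb Z$ with $g=\operatorname{genus}(G)\ge 1$, a group of cohomological dimension $3$; but the cover of $M$ with this fundamental group is a non-compact aspherical $3$-manifold, hence homotopy equivalent to a $2$-complex, so its $\pi_1$ cannot have cohomological dimension $3$. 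In the second case $\langle A,m\rangle$ is a torsion-free group containing $A$ with finite index, hence itself a closed surface group, and the corresponding cover of $M$ is homotopy equivalent to a closed surface; passing to a compact core and classifying compact $3$-manifolds with surface fundamental group forces this cover — and then $M$ itself — to be (virtually) a product $\Sigma\times\mathbb R$, where $\rho|_G\simeq\operatorname{id}_G$ is immediate. Either way we obtain a contradiction, proving the identity and with it the lemma. (Equivalently, one may run the whole argument in the universal cover $\widetilde M$: the preimage of $F$ has a distinguished component through a chosen lift of $p_0$, with stabilizer exactly $A$; since $\pi_1(G)$ has image conjugate to $A$, the preimage of $G$ has a component with stabilizer $A$, unique up to the action of $N_{\pi_1(M)}(A)/A$, and the resulting map $\pi_1(G)\xrightarrow{\sim}A\hookrightarrow\pi_1(M,\gamma)$ is precisely $\Theta_\Phi$ — the displayed identity being exactly what makes this well-defined up to inner automorphism of the domain.)
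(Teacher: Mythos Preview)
Your approach is genuinely different from the paper's, and considerably more indirect. The paper never invokes Lemma~\ref{homotopicisotopic} or any isotopy: it works directly with the arc $\gamma$. One puts $G$ in general position with respect to $\gamma$, declares two points of $G\cap\gamma$ \emph{equivalent} when the arc on $\gamma$ between them spliced with a path on $G$ is null-homotopic in $M$, and observes that the $\bmod\,2$ cardinalities of the equivalence classes are preserved under a general-position homotopy of $G$. Since $F\cap\gamma$ is a single point, $G\cap\gamma$ has a unique equivalence class of odd cardinality; any point in that class serves as a canonical basepoint (up to inner automorphism of $\pi_1(G)$), and the resulting map is manifestly constant under homotopy of $G$. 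Note that this argument uses neither $\pi_1$-injectivity of $F$ nor any structure theory of $3$-manifolds; in fact the paper then \emph{deduces} the triviality of self-homotopies (Lemma~\ref{uniquemcg}) from this lemma, whereas you are essentially trying to prove Lemma~\ref{uniquemcg} first and derive the present lemma from it.

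As for gaps: first, Lemma~\ref{homotopicisotopic} requires $\pi_1$-injectivity, which is not among the hypotheses here (it happens to hold in every application, but your proof does not establish the lemma as stated). Second, and more seriously, the normalizer identity $N_{\pi_1(M)}(A)=A\cdot C_{\pi_1(M)}(A)$ is exactly the content of the lemma you are trying to prove, and your argument for it is incomplete. The infinite-order case is fine, but in the torsion case you assert that the relevant cover, and then $M$ itself, is ``(virtually) a product $\Sigma\times\mathbb R$, where $\rho|_G\simeq\operatorname{id}_G$ is immediate,'' and then claim ``either way we obtain a contradiction.'' This is muddled: you have not explained why $M$ being \emph{virtually} a product makes $\rho|_G\simeq\operatorname{id}_G$ immediate, nor is that conclusion a contradiction---it is the desired statement. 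A clean way to handle this case is to use the amalgamated-product decomposition $\pi_1(M)=\pi_1((M\setminus G)_-)\ast_A\pi_1((M\setminus G)_+)$ coming from the fact that $G$ separates the two ends: in the Bass--Serre tree the edge with stabilizer $A$ is stabilized exactly by $A$, which rules out torsion in $N(A)/A$ directly. But once you have that decomposition in hand, the paper's argument via $\gamma$ is both shorter and more elementary.
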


\begin{proof}
Assume that $G$ intersects $\gamma$ transversally.  Let us call two intersections of $\gamma$ with $G$ \emph{equivalent} iff there is a path between them on $G$ which, when spliced with the path between them on $\gamma$, becomes null-homotopic in $M$ (this is an equivalence relation).  Note that during a general position homotopy of $G$, the $\operatorname{mod}2$ cardinalities of the equivalence classes of intersections with $\gamma$ remain the same.  Thus since $G$ is homotopic to $F$ and $\#(F\cap\gamma)=1$, there is a unique equivalence class of intersection $G\cap\gamma$ of odd cardinality.  Picking any one of these points as basepoint on $G$ and on $\gamma$ gives the same map $\pi_1(G)\to\pi_1(M,\gamma)$ up to inner automorphism of the domain.  This map is clearly constant under homotopy of $G$.
\end{proof}

\begin{lemma}\label{uniquemcg}
Let $M$ be a PL quasicylinder.  Let $F$ be a PL $\pi_1$-injective surface in $M$ separating the two ends of $M$.  Then any homotopy of $F$ to itself induces the trivial element of $\operatorname{MCG}(F)$.
\end{lemma}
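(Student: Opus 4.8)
The plan is to reduce the statement to a known fact about mapping classes of $\pi_1$-injective incompressible surfaces, namely that a $\pi_1$-injective surface $F$ in an irreducible three-manifold $M$ (with the surface not a sphere, which is automatic here since $\pi_1$-injective and $M$ irreducible forces $\pi_1(F)\neq 1$) has the property that the natural map $\operatorname{MCG}(F)\to\pi_0(\operatorname{Homeo}(M))$-orbit data is controlled by $\pi_1$. Concretely, a homotopy $h_t\colon F\to F$ from the identity to the identity (i.e.\ a loop in the space $\operatorname{Map}(F,M)$ based at the inclusion) determines an element $\phi\in\operatorname{MCG}(F)$, and I want to show $\phi$ is trivial. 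First I would observe that by Waldhausen's theorem (the same circle of ideas underlying Lemma \ref{homotopicisotopic}), homotopic incompressible surfaces in an irreducible three-manifold are isotopic, and more precisely the relevant uniqueness: if two embeddings $F\hookrightarrow M$ are homotopic then the resulting identification of $F$ with its image is well-defined up to isotopy of $F$. Applying this to the two ``ends'' of the homotopy — both equal to the inclusion $F\hookrightarrow M$ — gives that the self-homotopy induces a self-homeomorphism of $F$ isotopic to the identity, i.e.\ the trivial mapping class, provided the identification is canonical.

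The key step, and the main technical point, is to show that the identification is canonical, i.e.\ that the element of $\operatorname{MCG}(F)$ produced does not depend on the homotopy. For this I would use Lemma \ref{arcconst}: fix a PL arc $\gamma$ from one end of $M$ to the other meeting $F$ transversally in one point, giving the canonical map $\pi_1(F)\to\pi_1(M,\gamma)$ (up to inner automorphism). Because $F$ is $\pi_1$-injective, this map is injective. A self-homotopy $h_t$ of $F$ induces an automorphism $h_*$ of $\pi_1(F)$ (well-defined up to inner automorphism, depending on a choice of basepoint track), and by Lemma \ref{arcconst} the composite $\pi_1(F)\xrightarrow{h_*}\pi_1(F)\to\pi_1(M,\gamma)$ equals the original map $\pi_1(F)\to\pi_1(M,\gamma)$ up to inner automorphism of $\pi_1(M,\gamma)$. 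Since $\pi_1(F)\to\pi_1(M,\gamma)$ is injective, this forces $h_*$ to be an inner automorphism of $\pi_1(F)$; after adjusting the basepoint track we may take $h_*=\operatorname{id}$ on $\pi_1(F)$.

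It then remains to pass from ``acts trivially on $\pi_1(F)$'' to ``trivial in $\operatorname{MCG}(F)$.'' For this I invoke the classical fact (a consequence of Nielsen's theorem, or of the Dehn--Nielsen--Baer theorem together with the contractibility of Teichm\"uller space) that for a closed orientable surface $F$ the natural map $\operatorname{MCG}(F)\to\operatorname{Out}(\pi_1(F))$ is injective (an isomorphism onto an index-two subgroup, but injectivity is all we need). Hence a self-homeomorphism of $F$ acting trivially on $\pi_1(F)$ up to conjugacy is isotopic to the identity. I expect the main obstacle to be bookkeeping the basepoints and inner automorphisms carefully — tracking how the homotopy $h_t$ moves the chosen basepoint of $F$ along a path in $M$, and checking that Lemma \ref{arcconst}'s ``up to inner automorphism of the domain'' ambiguity can be absorbed consistently on both the $\pi_1(F)$ and $\pi_1(M,\gamma)$ sides — but no new geometric input beyond Lemma \ref{arcconst} and the injectivity of $\operatorname{MCG}(F)\to\operatorname{Out}(\pi_1(F))$ should be needed.
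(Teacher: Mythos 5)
Your proposal is correct and follows essentially the same route as the paper: the paper's (very terse) proof also fixes the arc $\gamma$, uses Lemma \ref{arcconst} to get a canonical injective map $\pi_1(F)\to\pi_1(M,\gamma)$ that is constant under homotopy, deduces that the induced automorphism of $\pi_1(F)$ is inner, and leaves implicit the final step you spell out, namely the injectivity of $\operatorname{MCG}(F)\to\operatorname{Out}(\pi_1(F))$. Your extra care with basepoints and the inner-automorphism ambiguity is exactly the bookkeeping the paper suppresses.
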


\begin{proof}
Pick a PL arc $\gamma$ from one end of $M$ to the other which intersects $F$ transversally exactly once.  By Lemma \ref{arcconst}, we get a canonical map $\pi_1(F)\to\pi_1(M,\gamma)$ which is constant as we move $F$ by homotopy.  Since this map is injective, we know $\pi_1(F)$ up to inner automorphism as a subgroup of $\pi_1(M,\gamma)$.
\end{proof}

\begin{lemma}\label{posetcomp}
Let $M$ be a directed quasicylinder.  Then the pair $(\mathcal S(M),\leq)$ is a partially-ordered set.  That is, for all $\mathfrak F_1,\mathfrak F_2,\mathfrak F_3\in\mathcal S(M)$ we have:\\
\indent 1.\ (reflexivity) $\mathfrak F_1\leq\mathfrak F_1$.\\
\indent 2.\ (antisymmetry) $\mathfrak F_1\leq\mathfrak F_2\leq\mathfrak F_1\implies\mathfrak F_1=\mathfrak F_2$.\\
\indent 3.\ (transitivity) $\mathfrak F_1\leq\mathfrak F_2\leq\mathfrak F_3\implies\mathfrak F_1\leq\mathfrak F_3$.
\end{lemma}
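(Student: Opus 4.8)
The plan is to verify the three poset axioms using the combination of Lemma \ref{homotopicisotopic} (homotopic $\pi_1$-injective surfaces are ambient isotopic) and Lemma \ref{uniquemcg} (a self-homotopy of such a surface is isotopic to the identity of the surface). Reflexivity is essentially immediate: given $\mathfrak F_1$, pick a PL representative $F$, push it slightly off itself using its bicollar to get a parallel copy $F'$ homotopic to $F$ with $F'\subseteq(M\setminus F)_-$, and observe $F\leq F'$ hence $\mathfrak F_1\leq\mathfrak F_1$; alternatively, one can even allow $F_1=F_2$ literally in the definition of $F_1\leq F_2$ by checking whether the definition of $\leq$ on embedded surfaces as stated permits coincident surfaces, but the parallel-copy argument is cleaner and category-agnostic.

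For transitivity, suppose $\mathfrak F_1\leq\mathfrak F_2$ and $\mathfrak F_2\leq\mathfrak F_3$. Choose embedded representatives realizing each inequality. The difficulty is that the representative of $\mathfrak F_2$ used to witness $\mathfrak F_1\leq\mathfrak F_2$ may differ from the one witnessing $\mathfrak F_2\leq\mathfrak F_3$. To fix this I would fix one PL representative $F_2$ of $\mathfrak F_2$, and then, using Lemma \ref{homotopicisotopic}, ambient-isotope $M$ so that the representative $F_1$ of $\mathfrak F_1$ satisfies $F_1\subseteq(M\setminus F_2)_-$; similarly isotope so $F_3$ satisfies $F_3\subseteq(M\setminus F_2)_+$, i.e. $F_2\le F_3$. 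One must be careful to perform these two isotopies so they don't interfere — but since the first can be supported in a neighborhood of $(M\setminus F_2)_-$ and the second in a neighborhood of $(M\setminus F_2)_+$, they have disjoint support and compose. Now $F_1\subseteq(M\setminus F_2)_-\subseteq(M\setminus F_3)_-$ (the last inclusion because $F_2\leq F_3$ means $(M\setminus F_2)_-\subseteq(M\setminus F_3)_-$, as $F_2$ lies on the $-$ side of $F_3$), giving $F_1\leq F_3$ and hence $\mathfrak F_1\leq\mathfrak F_3$.

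Antisymmetry is the main obstacle. Suppose $\mathfrak F_1\leq\mathfrak F_2$ and $\mathfrak F_2\leq\mathfrak F_1$; we want $\mathfrak F_1=\mathfrak F_2$, i.e. the representatives are homotopic. Fix a PL representative $F_1$. Using Lemma \ref{homotopicisotopic}, arrange a representative $F_2$ with $F_1\subseteq(M\setminus F_2)_-$, i.e. $F_1\le F_2$. The relation $\mathfrak F_2\le\mathfrak F_1$ gives (after a further isotopy, supported away from $F_2$) a representative $F_1'$ of $\mathfrak F_1$ with $F_2\subseteq(M\setminus F_1')_-$, i.e. $F_2\le F_1'$. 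Since $F_1$ and $F_1'$ are both representatives of $\mathfrak F_1$, they are homotopic hence ambient isotopic (Lemma \ref{homotopicisotopic}); the point is that now $F_1$, $F_2$, $F_1'$ sit in $M$ with $F_1\le F_2\le F_1'$ and $F_1$ isotopic to $F_1'$. The region $W$ between $F_1$ and $F_1'$ is a compact irreducible three-manifold containing $F_2$, with $\partial W$ two homotopic (hence, in $W$, parallel) copies of the surface; by the standard fact that an incompressible surface in $\Sigma\times[0,1]$ homotopic to $\Sigma\times\{0\}$ is parallel to it (which follows from Waldhausen, as invoked in Lemma \ref{homotopicisotopic}), $F_2$ is parallel in $W$ to $F_1$, hence homotopic to $F_1$ in $M$, so $\mathfrak F_1=\mathfrak F_2$. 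The subtle points to nail down are (i) that $W$ is indeed a product $\Sigma\times[0,1]$ — this needs that $F_1,F_1'$ cobound a product region, which again comes from Waldhausen's theorem together with irreducibility of $M$, and (ii) that Lemma \ref{uniquemcg} is what lets us conclude the homotopy respects the identification so that "parallel in $W$" upgrades to an actual equality in $\mathcal S(M)$ without a subtlety about which isotopy class of parametrization we picked. I expect (i) to require the most care, and I would handle it by first reducing to a compact irreducible submanifold-with-boundary of $M$ containing everything in sight and then citing Waldhausen directly.
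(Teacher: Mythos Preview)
Your reflexivity and transitivity are essentially the paper's arguments, with one unnecessary detour: for transitivity the two isotopies need not have disjoint support or ``compose'' at all. Each is used only to manufacture a representative (set $F_1:=\phi(F_1^a)$ where $\phi$ carries $F_2^a$ to the fixed $F_2$, and $F_3:=\psi(F_3^b)$ where $\psi$ carries $F_2^b$ to $F_2$); then $F_1\le F_2\le F_3$ holds for embedded surfaces, and that relation is transitive on the nose. The paper does this with a single isotopy (carry $F_1$ along the isotopy from $F_2$ to $F_2'$), which is tidier. Your claim that the isotopies can be supported on opposite sides of $F_2$ is unjustified and unneeded.

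For antisymmetry your route is genuinely different from the paper's. You argue geometrically: from $F_1\le F_2\le F_1'$ with $F_1\simeq F_1'$, the region $W$ between $F_1$ and $F_1'$ is a product $F_1\times I$ (Waldhausen's parallelism theorem for disjoint homotopic incompressible surfaces---a statement slightly stronger than what Lemma~\ref{homotopicisotopic} records), and then every closed incompressible surface in $F_1\times I$ is horizontal, so $F_2$ is parallel to $F_1$. The paper instead argues algebraically: it chooses an arc $\gamma$ meeting each of $F_1,F_2,F_1',F_2'$ once, uses Lemma~\ref{arcconst} to view each $\pi_1(F_i)$ as a canonical subgroup of $\pi_1(M,\gamma)$ (with $\pi_1(F_i)=\pi_1(F_i')$), and then the van Kampen amalgamated-product decomposition over $F_2$ forces $\pi_1(F_1)\subseteq\pi_1(F_2)$; symmetry gives equality, and obstruction theory (using $\pi_2(M)=0$ from the sphere theorem) yields the homotopy. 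Both approaches are correct. Yours is more geometric but imports two external facts (Waldhausen parallelism and the classification of closed incompressible surfaces in $\Sigma\times I$); the paper's is self-contained modulo elementary properties of amalgamated products, and this is why it introduced Lemma~\ref{arcconst} in the first place. Your invocation of Lemma~\ref{uniquemcg} in point~(ii) is a red herring: once $F_2$ is parallel to $F_1$ in $W$ they are homotopic in $M$, and that is exactly the statement $\mathfrak F_1=\mathfrak F_2$.
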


\begin{proof}
By Lemma \ref{pltopequiv}, it suffices to work in the PL category.

Reflexivity is obvious.

For transitivity, suppose $\mathfrak F_1\leq\mathfrak F_2\leq\mathfrak F_3$.  Then pick representatives $F_1,F_2,F_2',F_3'$ such that $F_1\leq F_2$ and $F_2'\leq F_3'$.  By Lemma \ref{homotopicisotopic}, there is a PL isotopy from $F_2$ to $F_2'$.  Applying this isotopy to $F_1$ produces $F_1'\leq F_2'\leq F_3'$.

For antisymmetry, suppose $\mathfrak F_1\leq\mathfrak F_2$ and $\mathfrak F_2\leq\mathfrak F_1$.  Pick PL representatives $F_1\leq F_2\leq F_1'\leq F_2'$ (this is possible by the argument used for transitivity).  Now pick a PL arc $\gamma$ from one end of $M$ to the other which intersects each of $F_1,F_1',F_2,F_2'$ exactly once.  Since the maps $\pi_1(F_1),\pi_1(F_1'),\pi_1(F_2),\pi_1(F_2')\to\pi_1(M,\gamma)$ are injective, we will identify each of the groups on the left with its image in $\pi_1(M,\gamma)$.  By Lemma \ref{arcconst}, we have $\pi_1(F_i)=\pi_1(F_i')$ under this identification.

By van Kampen's theorem we have:
\begin{equation}\label{amalg}
\pi_1(M,\gamma)=\pi_1((M\setminus F_2)_-,\gamma)\mathop\ast_{\pi_1(F_2)}\pi_1((M\setminus F_2)_+,\gamma)
\end{equation}
Since $F_1\leq F_2\leq F_1'$, we have $\pi_1(F_1)\subseteq\pi_1((M\setminus F_2)_-,\gamma)$ and $\pi_1(F_1')\subseteq\pi_1((M\setminus F_2)_+,\gamma)$.  Since $\pi_1(F_1)=\pi_1(F_1')$, we have $\pi_1(F_1)\subseteq\pi_1((M\setminus F_2)_-,\gamma)\cap\pi_1((M\setminus F_2)_+,\gamma)$.  It follows from the properties of the amalgamated product (\ref{amalg}) that this intersection is just $\pi_1(F_2)$.  Thus we have $\pi_1(F_1)\subseteq\pi_1(F_2)$.  A symmetric argument shows the reverse inclusion, so we have $\pi_1(F_1)=\pi_1(F_2)$.  Now an easy obstruction theory argument shows $F_1$ and $F_2$ are homotopic (using the fact that $\pi_2(M)=0$ by the sphere theorem \cite{papa}).  Thus $\mathfrak F_1=\mathfrak F_2$.
\end{proof}

\begin{lemma}\label{subposetone}
Let $M$ be a directed quasicylinder.  Let $F$ be a bicollared embedded $\pi_1$-injective surface.  Then the natural map $\psi:\mathcal S((M\setminus F)_-)\to\mathcal S(M)$ is a bijection of $\mathcal S((M\setminus F)_-)$ with the set $\left\{\mathfrak G\in\mathcal S(M)\bigm|\mathfrak G\leq[F]\right\}$.  Furthermore, this bijection satisfies $\mathfrak G_1\leq\mathfrak G_2\iff\psi(\mathfrak G_1)\leq\psi(\mathfrak G_2)$.
\end{lemma}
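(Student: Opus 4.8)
The plan is to construct the map $\psi$ geometrically and then verify it is a well-defined, order-preserving bijection onto the downset of $[F]$ by combining the structural results already established (Lemmas~\ref{homotopicisotopic}, \ref{pltopequiv}, \ref{posetcomp}, and especially the van Kampen/amalgamated-product bookkeeping from the proof of Lemma~\ref{posetcomp}). By Lemma~\ref{pltopequiv} we may work entirely in the PL category and straighten $F$ to a PL surface; then $(M\setminus F)_-$ is a PL directed quasicylinder (with the $-$ end inherited from $M$ and the new end being the copy of $F$), so $\mathcal S((M\setminus F)_-)$ makes sense. The map $\psi$ is simply ``include'': given a PL $\pi_1$-injective surface $G\subseteq (M\setminus F)_-$ generating $H_2((M\setminus F)_-)$, view it as a surface in $M$. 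It still separates the two ends of $M$ (a path from the $-$ end to the $+$ end of $M$ can be routed through $F$, hence crosses $G$), so it lies in $\mathcal S(M)$, and visibly $G\leq F$ in the sense that $G\subseteq (M\setminus F)_-$; thus $\psi(\mathfrak G)\leq[F]$. That $\psi$ respects $\leq$ in the forward direction is immediate from the definition (an isotopy realizing $\mathfrak G_1\leq\mathfrak G_2$ inside $(M\setminus F)_-$ is in particular one in $M$), and in the reverse direction one pushes the witnessing representatives for $\psi(\mathfrak G_1)\leq\psi(\mathfrak G_2)$ back into $(M\setminus F)_-$ using Lemma~\ref{homotopicisotopic} to reposition $\psi(\mathfrak G_2)$ onto a representative contained in $(M\setminus F)_-$, carrying $\psi(\mathfrak G_1)$ along.

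For surjectivity onto $\{\mathfrak G\in\mathcal S(M)\mid\mathfrak G\leq[F]\}$: if $\mathfrak G\leq[F]$ then by definition (and the argument used for transitivity in Lemma~\ref{posetcomp}) there are representatives with $G\leq F$, i.e.\ $G\subseteq(M\setminus F)_-$, and $G$ generates $H_2((M\setminus F)_-)$ (it separates the two ends of $(M\setminus F)_-$ since it separates the $-$ end of $M$ from $F$), so $\mathfrak G$ is in the image of $\psi$. For injectivity: suppose $G_1,G_2\subseteq(M\setminus F)_-$ are homotopic in $M$; I need them homotopic inside $(M\setminus F)_-$. Pick a PL arc $\gamma$ from the $-$ end of $M$ to the $+$ end meeting each of $G_1,G_2,F$ exactly once, arranged so that $\gamma$ crosses $G_1,G_2$ before $F$. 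By Lemma~\ref{arcconst} the canonical injections $\pi_1(G_1),\pi_1(G_2)\hookrightarrow\pi_1(M,\gamma)$ have the same image (since $G_1\simeq G_2$), and both land in $\pi_1((M\setminus F)_-,\gamma)$ because $G_1,G_2$ lie on that side. Running the arc-and-equivalence-class argument of Lemma~\ref{arcconst} inside $(M\setminus F)_-$ with the truncated arc $\gamma\cap(M\setminus F)_-$ identifies these with the canonical images of $\pi_1(G_1),\pi_1(G_2)$ in $\pi_1((M\setminus F)_-,\gamma)$, which is a subgroup of $\pi_1(M,\gamma)$ by the amalgam decomposition~(\ref{amalg}); hence the two images agree in $\pi_1((M\setminus F)_-,\gamma)$, and the obstruction-theory argument from the end of Lemma~\ref{posetcomp} (using $\pi_2=0$ by the sphere theorem, now applied to $(M\setminus F)_-$) shows $G_1\simeq G_2$ in $(M\setminus F)_-$, i.e.\ $\mathfrak G_1=\mathfrak G_2$ in $\mathcal S((M\setminus F)_-)$.

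The main obstacle is injectivity: one must be careful that ``homotopic in $M$'' really forces ``homotopic in the smaller quasicylinder,'' and this is exactly where the amalgamated-free-product structure of $\pi_1(M,\gamma)$ over $\pi_1(F)$ does the work, guaranteeing that a $\pi_1$-class supported on the $-$ side is detected there. The subtlety is bookkeeping the several basepoint identifications consistently (which Lemma~\ref{arcconst} is designed to handle) and confirming that $(M\setminus F)_-$ is genuinely irreducible and a quasicylinder so that Lemmas~\ref{homotopicisotopic} and the sphere theorem apply — irreducibility of $(M\setminus F)_-$ follows since any embedded $2$-sphere in it bounds a ball in $M$ (as $M$ is irreducible) disjoint from $F$ (push it off, $F$ being $\pi_1$-injective hence not $\pi_1$-trivial, so the ball cannot swallow $F$), and that ball lies in $(M\setminus F)_-$. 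Everything else is a direct assembly of the cited lemmas.
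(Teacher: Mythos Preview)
Your proposal is correct and follows essentially the same route as the paper's own proof: reduce to PL via Lemma~\ref{pltopequiv}, use Lemma~\ref{homotopicisotopic} to realize surjectivity and the nontrivial direction of order-preservation, and establish injectivity by identifying the images of $\pi_1(G_1),\pi_1(G_2)$ inside $\pi_1((M\setminus F)_-,\gamma)$ via Lemma~\ref{arcconst} and the injectivity of $\pi_1((M\setminus F)_-,\gamma)\to\pi_1(M,\gamma)$ coming from the amalgam over $\pi_1(F)$, then concluding with the obstruction-theory step. The only differences are cosmetic: for the reverse order implication the paper first arranges $G_1'\leq G_2'\leq F'$ (via the transitivity trick) and then isotopes $F'$ to $F$, whereas you isotope $G_2'$ directly onto a representative inside $(M\setminus F)_-$ and invoke injectivity of $\psi$ afterward; and you spell out the irreducibility of $(M\setminus F)_-$, which the paper had asserted earlier without proof.
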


\begin{proof}
By Lemma \ref{pltopequiv}, it suffices to work in the PL category.

Certainly the image of $\psi$ is contained in $\left\{\mathfrak G\in\mathcal S(M)\bigm|\mathfrak G\leq[F]\right\}$.  If $\mathfrak G\leq[F]$, then there are representatives $G'\leq F'$.  By Lemma \ref{homotopicisotopic}, there is a PL isotopy sending $F'$ to $F$.  Applying this isotopy to $G'$ gives a representative $G\leq F$.  Thus the image of $\psi$ is exactly $\left\{\mathfrak G\in\mathcal S(M)\bigm|\mathfrak G\leq[F]\right\}$.  To prove that $\psi$ is injective, suppose $G_1,G_2\leq F$ are two $\pi_1$-injective embedded surfaces which are homotopic in $M$.  Pick an arc $\gamma$ from one end of $M$ to the other which intersects $G_1,F$ exactly once.  Since $G_1,G_2$ are homotopic, Lemma \ref{arcconst} gives canonical maps $\pi_1(G_1),\pi_1(G_2)\to\pi_1(M,\gamma)$ with the same image.  Now these both factor through $\pi_1((M\setminus F)_-,\gamma)\to\pi_1(M,\gamma)$, which is injective since $\pi_1(F)\to\pi_1((M\setminus F)_\pm,\gamma)$ are injective.  Thus $\pi_1(G_1),\pi_1(G_2)\to\pi_1((M\setminus F)_-,\gamma)$ have the same image, so the same obstruction theory argument used in the proof of Lemma \ref{posetcomp} implies that $G_1,G_2$ are homotopic in $(M\setminus F)_-$.  Thus $\psi$ is injective.

Now it remains to show that $\psi$ preserves $\leq$.  The only nontrivial direction is to show that $\psi(\mathfrak G_1)\leq\psi(\mathfrak G_2)\implies\mathfrak G_1\leq\mathfrak G_2$.  If $\psi(\mathfrak G_1)\leq\psi(\mathfrak G_2)$, then we have representatives $G_1'\leq G_2'\leq F'$ (by the argument for transitivity in Lemma \ref{posetcomp}).  Now by Lemma \ref{homotopicisotopic} there is a PL isotopy from $F'$ to $F$, and applying this isotopy to $G_1',G_2'$, we get $G_1\leq G_2\leq F$.  Since $\psi$ is injective, we have $[G_i]=\mathfrak G_i$ in $\mathcal S((M\setminus F)_-)$.  Thus $\mathfrak G_1\leq\mathfrak G_2$.
\end{proof}

\begin{lemma}\label{subposettwo}
Let $M$ be a directed quasicylinder.  Let $F_1\leq F_2$ be two bicollared embedded $\pi_1$-injective surfaces.  Then the natural map $\psi:\mathcal S((M\setminus F_1)_+\cap(M\setminus F_2)_-)\to\mathcal S(M)$ is a bijection of $\mathcal S((M\setminus F_1)_+\cap(M\setminus F_2)_-)$ with $\left\{\mathfrak G\in\mathcal S(M)\bigm|[F_1]\leq\mathfrak G\leq[F_2]\right\}$.  Furthermore, this bijection satisfies $\mathfrak G_1\leq\mathfrak G_2\iff\psi(\mathfrak G_1)\leq\psi(\mathfrak G_2)$.
\end{lemma}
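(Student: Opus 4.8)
The plan is to deduce Lemma~\ref{subposettwo} from Lemma~\ref{subposetone} by applying the latter twice. First note that Lemma~\ref{subposetone} has an evident mirror form: reversing the labelling of the two ends of a directed quasicylinder interchanges the two labelled components of $M\setminus F$ and reverses the partial order on every $\mathcal S(\cdot)$ involved, so Lemma~\ref{subposetone} equally asserts that the natural map $\mathcal S((M\setminus F)_+)\to\mathcal S(M)$ is a bijection onto $\{\mathfrak G\in\mathcal S(M)\mid\mathfrak G\geq[F]\}$ with $\mathfrak G_1\leq\mathfrak G_2\iff\psi(\mathfrak G_1)\leq\psi(\mathfrak G_2)$. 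Applying this mirror form to $M$ and $F_1$ identifies the poset $\mathcal S((M\setminus F_1)_+)$ with the subposet $\{\mathfrak G\in\mathcal S(M)\mid[F_1]\leq\mathfrak G\}$ of $\mathcal S(M)$ via the natural map. Since $F_1\leq F_2$ forces $F_2\subseteq(M\setminus F_1)_+$, under this identification the class of $F_2$ in $\mathcal S((M\setminus F_1)_+)$ goes to the class of $F_2$ in $\mathcal S(M)$.

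Next I would apply Lemma~\ref{subposetone} itself to $N:=(M\setminus F_1)_+$, which is a directed quasicylinder (with its induced labelling of ends) by the remarks following the definition of directed quasicylinder, and to the surface $F_2\subseteq N$. One must check that $F_2$ satisfies the hypotheses: it is bicollared in $N$ since it was bicollared in $M$ and lies in the open set $N$; it is $\pi_1$-injective in $N$ because $\pi_1(F_2)\to\pi_1(M)$ is injective and factors through $\pi_1(N)\to\pi_1(M)$, the latter being injective since $\pi_1(F_1)\to\pi_1((M\setminus F_1)_\pm)$ are injective; and $F_2$ separates the two ends of $N$ (the end near $F_1$ from the $+$-end of $M$), again because $F_1\leq F_2$. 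The region between $F_1$ and $F_2$ satisfies $(M\setminus F_1)_+\cap(M\setminus F_2)_-=(N\setminus F_2)_-$; call it $W$. Lemma~\ref{subposetone} then gives that the natural map $\mathcal S(W)\to\mathcal S(N)$ is a bijection onto $\{\mathfrak G\in\mathcal S(N)\mid\mathfrak G\leq[F_2]\}$ which is order-preserving and order-reflecting.

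Finally I would compose. The composite $\mathcal S(W)\to\mathcal S(N)\to\mathcal S(M)$ is the natural map induced by $W\hookrightarrow M$, that is, the map $\psi$ in the statement. Each of the two maps is an order-preserving, order-reflecting bijection onto its image, hence so is $\psi$. Its image is the image under $\mathcal S(N)\to\mathcal S(M)$ of $\{\mathfrak G\in\mathcal S(N)\mid\mathfrak G\leq[F_2]\}$; since that map is order-preserving with image $\{\mathfrak G\in\mathcal S(M)\mid[F_1]\leq\mathfrak G\}$ and carries the class of $F_2$ in $N$ to the class of $F_2$ in $M$, the image of $\psi$ equals $\{\mathfrak G\in\mathcal S(M)\mid[F_1]\leq\mathfrak G\}\cap\{\mathfrak G\in\mathcal S(M)\mid\mathfrak G\leq[F_2]\}=\{\mathfrak G\in\mathcal S(M)\mid[F_1]\leq\mathfrak G\leq[F_2]\}$, as desired.

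There is no deep step here; the content is bookkeeping, and the only point demanding real care is verifying that Lemma~\ref{subposetone} genuinely applies to the pair $(N,F_2)$, together with tracking how the $\pm$-labellings, and hence the partial orders, behave under passing to $(M\setminus F_1)_+$ and under the mirror construction. Should one wish to avoid the reduction, one could instead repeat the arc-and-van-Kampen argument of Lemma~\ref{subposetone} verbatim, now with a PL arc meeting each of $F_1$, $F_2$ and the competing surfaces exactly once and using that $\pi_1(W)$ injects into $\pi_1(M)$ because $W$ is a vertex group in a graph-of-groups splitting of $M$ along the incompressible surfaces $F_1,F_2$; but the two-fold application of Lemma~\ref{subposetone} is cleaner.
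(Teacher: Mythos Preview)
Your argument is correct and follows exactly the route the paper takes: the paper's own proof reads in its entirety ``This is just two applications of Lemma~\ref{subposetone}.'' You have simply spelled out those two applications (one in its mirror form) together with the necessary verifications that $F_2$ is $\pi_1$-injective in $(M\setminus F_1)_+$ and that the images and orders match up under composition.
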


\begin{proof}
This is just two applications of Lemma \ref{subposetone}.
\end{proof}

\begin{lemma}\label{maximalcompression}
Let $M$ be a PL directed quasicylinder.  Let $G\subseteq M$ be any PL embedded surface (not necessarily $\pi_1$-injective) separating the two ends of $M$.  Then there exists a class $\mathfrak G\in\mathcal S(M)$ such that for all $\mathfrak F\in\mathcal S(M)$ with a representative $F\leq G$ (resp.\ $F\geq G$), we have $\mathfrak F\leq\mathfrak G$ (resp.\ $\mathfrak F\geq\mathfrak G$).
\end{lemma}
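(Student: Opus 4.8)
The plan is to perform a maximal compression of $G$ inside $M$, and to show that the resulting $\pi_1$-injective surface represents the desired class $\mathfrak G$. First I would apply the loop theorem repeatedly: whenever the inclusion $G\hookrightarrow M$ fails to be $\pi_1$-injective, there is an embedded compressing disk $D$ with $\partial D\subseteq G$ essential on $G$, and $D$ can be taken to meet $G$ only along $\partial D$. Compressing $G$ along $D$ replaces $G$ by a new PL embedded surface $G'$ which still separates the two ends of $M$ (compression does not change the homology class, and a surface generating $H_2(M)$ is exactly one separating the ends), and which is ``smaller'' in complexity — say, has smaller $-\chi(G)$, or fewer components after discarding the (at most one) component that separates the ends. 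Since each compression strictly decreases a nonnegative integer complexity, after finitely many steps we arrive at a surface which admits no essential compressing disk, i.e.\ is $\pi_1$-injective; call this final surface (the component separating the two ends) $F_0$, and let $\mathfrak G = [F_0]\in\mathcal S(M)$, which is well-defined by Remark \ref{nonempty} and Lemma \ref{homotopicisotopic}.

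Next I would verify the claimed maximality property. Suppose $\mathfrak F\in\mathcal S(M)$ has a representative $F\leq G$. I want to arrange that the entire compression procedure above can be carried out in the complement $(M\setminus F)_+$, so that at every stage the surface stays on the $+$ side of $F$; then $F_0\subseteq (M\setminus F)_+$ as well, hence $F\leq F_0$, i.e.\ $\mathfrak F\leq\mathfrak G$. The point is that the compressing disks can be chosen disjoint from $F$: if an innermost compressing disk $D$ for the current surface meets $F$, then $D\cap F$ is a union of circles, each bounding a subdisk of $D$; since $F$ is $\pi_1$-injective and $M$ is irreducible, an innermost such circle bounds a disk in $F$, and we can do an innermost exchange (replacing a subdisk of $D$ by a parallel copy of the subdisk of $F$ and pushing off) to reduce $\#(D\cap F)$. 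After finitely many such exchanges we obtain a compressing disk disjoint from $F$, and since $F$ separates $M$ and the current surface lies (inductively) in $(M\setminus F)_+$, so does the compressed surface. This shows that, up to replacing the compressing disks, the whole process happens inside $(M\setminus F)_+$; therefore $F_0\subseteq (M\setminus F)_+$, giving $\mathfrak F\leq\mathfrak G$. The case $F\geq G$ is entirely symmetric, working in $(M\setminus F)_-$ instead; note also that by Lemma \ref{homotopicisotopic} the conclusion $\mathfrak F\leq\mathfrak G$ does not depend on which representative $F$ of $\mathfrak F$ with $F\leq G$ we chose.

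The main obstacle I anticipate is the bookkeeping in the second paragraph: making the ``compression'' well-defined as an operation on the complexity, confirming that the surface never ceases to separate the two ends (so that $\mathcal S$ of the relevant sub-quasicylinder is nonempty and the construction keeps making sense), and — most delicately — checking that the innermost-disk exchange argument genuinely lets one push all the compressing disks off of $F$ simultaneously for the cofinal sequence of compressions, rather than just one at a time. A clean way to organize this is to first isotope $G$ so that it is transverse to $F$ and, using irreducibility of $M$ together with $\pi_1$-injectivity of $F$, to remove all circles of $G\cap F$ that bound disks on both sides; then $G\cap F=\varnothing$ is impossible only if $G$ and $F$ are parallel (in which case $\mathfrak F=\mathfrak G$ is forced), and otherwise one gets $G$ entirely on one side of $F$ and the compression proceeds in a single sub-quasicylinder with no further interaction with $F$. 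I would also remark that the reverse inequality implicit in applications (that $\mathfrak G$ is the \emph{least} such upper bound when $G$ is itself $\pi_1$-injective) is immediate, since then the compression is trivial and $\mathfrak G=[G]$.
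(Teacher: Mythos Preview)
Your approach is the same as the paper's: compress $G$ repeatedly (retaining a component nonzero in $H_2(M)$) until the result is incompressible, and observe that each compression step preserves the property ``isotopable into $(M\setminus F)_\pm$'' for any fixed incompressible $F$, justified via the innermost-disk exchange you describe. The paper's proof is simply a terser version of yours; note that your final paragraph's worry about circles of $G\cap F$ is moot, since the hypothesis $F\leq G$ already places $F$ and $G$ on disjoint sides.
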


\begin{proof}
As long as $G$ is compressible, we can perform the following operation.  Do some compression on $G$ (this may disconnect $G$), and pick one of the resulting connected components which is nonzero in $H_2(M)$ to keep.  This operation does not destroy the property that $G$ can be isotoped to lie in $(M\setminus F)_+$ (resp.\ $(M\setminus F)_-$) for an incompressible surface $F$.  Since each step decreases the genus of $G$, we eventually reach an incompressible surface, which by the loop theorem is $\pi_1$-injective, and thus defines a class $\mathfrak G\in\mathcal S(M)$.
\end{proof}

\begin{lemma}\label{existupperandlowerbounds}
Let $M$ be a directed quasicylinder.  Let $A\subseteq\mathcal S(M)$ be a finite set.  Then there exist elements $\mathfrak A_-,\mathfrak A_+\in\mathcal S(M)$ such that $\mathfrak A_-\leq\mathfrak A\leq\mathfrak A_+$ for all $\mathfrak A\in A$.
\end{lemma}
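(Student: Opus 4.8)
The plan is to reduce to Lemma~\ref{maximalcompression}: for the upper bound it suffices to build one \emph{connected} PL surface $G\subseteq M$ separating the two ends such that every element of $A$ has a representative $\leq G$, for then Lemma~\ref{maximalcompression} supplies an element $\mathfrak A_+\in\mathcal S(M)$ with $\mathfrak A\leq\mathfrak A_+$ for all $\mathfrak A\in A$; the lower bound follows by the same construction with the labelling of the two ends reversed (using the ``$\geq$'' half of Lemma~\ref{maximalcompression}). By Lemma~\ref{pltopequiv} we may work in the PL category, and the case $A=\varnothing$ is immediate from Remark~\ref{nonempty}, so write $A=\{\mathfrak A_1,\dots,\mathfrak A_n\}$ with $n\geq 1$.

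First I would choose PL representatives $F_1,\dots,F_n$ of $\mathfrak A_1,\dots,\mathfrak A_n$, isotope them into general position so that $\mathcal F:=F_1\cup\cdots\cup F_n$ is a compact PL subcomplex of $M$, and take a closed regular neighborhood $N\subseteq M$ of $\mathcal F$. Since $\mathcal F\supseteq F_1$ represents a generator of $H_2(M)$ it separates the two ends, hence so does the compact set $N$; let $C_+$ be the unique component of $M\setminus N$ cofinal with the $+$ end. Then $\overline{C_+}$ is a connected PL three-manifold whose compact boundary $G_0:=\operatorname{fr}(C_+)$ is a union $P_1\sqcup\cdots\sqcup P_k$ of components of the closed PL surface $\partial N$ (with $k\geq 1$). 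If $k\geq 2$ I would tube the $P_i$ together inside $C_+$: pick pairwise disjoint PL arcs $\alpha_1,\dots,\alpha_{k-1}$ in $\overline{C_+}$ with interiors in $C_+$ and endpoints on $G_0$ whose endpoint-pairs form a spanning tree on $\{P_1,\dots,P_k\}$, take disjoint tubular neighborhoods $\alpha_j\times D^2$, and set
\[
V':=\overline{C_+}\setminus\bigcup_{j=1}^{k-1}\operatorname{int}(\alpha_j\times D^2),\qquad G:=\partial V'
\]
(if $k=1$ just take $V'=\overline{C_+}$, $G=G_0$). Each tunnel merges two boundary components, so after the $k-1$ tunnels $G$ is a \emph{connected} closed PL surface, and $V'$ is a connected PL three-manifold.

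It remains to check that $G$ works. Since $M$, $V'$, and $G=\partial V'$ are all connected, $M\setminus\operatorname{int}V'$ is connected; hence $M\setminus G=\operatorname{int}V'\sqcup(M\setminus V')$ is a decomposition into exactly two nonempty open sets, the first cofinal with the $+$ end and the second cofinal with the $-$ end, so $G$ separates the two ends. Moreover $F_i\subseteq\mathcal F\subseteq\operatorname{int}N$ while $\overline{C_+}\cap\operatorname{int}N=\varnothing$ (as $C_+\cap N=\varnothing$ and $G_0\subseteq\partial N$), so $V'\cap F_i=\varnothing$ and therefore $F_i\subseteq M\setminus V'=(M\setminus G)_-$, i.e.\ $F_i\leq G$. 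Thus each $\mathfrak A_i$ has a representative $\leq G$, and Lemma~\ref{maximalcompression} yields $\mathfrak G\in\mathcal S(M)$ with $\mathfrak A_i\leq\mathfrak G$ for all $i$; set $\mathfrak A_+:=\mathfrak G$, and obtain $\mathfrak A_-$ symmetrically.

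The only genuinely delicate step is producing a \emph{single connected} separating PL surface lying disjointly on the negative side of all the $F_i$ at once: the natural candidate --- the frontier of the $+$-end component $C_+$ of the complement of a neighborhood of $\bigcup F_i$ --- may be disconnected, so one tubes its components together inside $C_+$ and then re-verifies that $M$ splits along the resulting surface into a ``$+$ half'' and a ``$-$ half''. The remaining ingredients --- PL representatives and general position, regular neighborhoods of PL subcomplexes, and the elementary connectivity bookkeeping above, followed by one invocation of Lemma~\ref{maximalcompression} --- are routine.
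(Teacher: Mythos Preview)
Your proof is correct and follows essentially the same strategy as the paper's: produce a single connected separating PL surface $G$ with every $F_i\leq G$, then apply Lemma~\ref{maximalcompression} (and symmetrically for the lower bound). The only difference is cosmetic---the paper sets $A_+:=\partial\big(((M\setminus A_1)_+\cap\cdots\cap(M\setminus A_n)_+)_0\big)$ directly and asserts that this frontier is already connected, whereas you pass through a regular neighborhood of $\bigcup F_i$ and tube the boundary components of the $+$-end region together to guarantee connectedness explicitly.
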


\begin{proof}
By Lemma \ref{pltopequiv}, it suffices to work in the PL category.

Pick PL transverse representatives $A_1,\ldots,A_n$ of all the surfaces in $A$.  Let $((M\setminus A_1)_\pm\cap\cdots\cap(M\setminus A_n)_\pm)_0$ denote the unique unbounded component of $(M\setminus A_1)_\pm\cap\cdots\cap(M\setminus A_n)_\pm$, and define $A_\pm=\partial(((M\setminus A_1)_\pm\cap\cdots\cap(M\setminus A_n)_\pm)_0)$.  It is easy to see that $A_\pm$ are connected and separate the two ends of $M$.  Now apply Lemma \ref{maximalcompression} to $A_-$ and $A_+$ to get the desired classes $\mathfrak A_-,\mathfrak A_+\in\mathcal S(M)$.
\end{proof}

\begin{lemma}[suggested by Ian Agol \cite{mathoverflow}]\label{lattice}
Let $M$ be a directed quasicylinder.  Then the partially-ordered set $(\mathcal S(M),\leq)$ is a lattice.  That is, for $\mathfrak F_1,\mathfrak F_2\in\mathcal S(M)$, the following set has a least element:
\begin{equation}
X(\mathfrak F_1,\mathfrak F_2)=\left\{\mathfrak H\in\mathcal S(M)\bigm|\mathfrak F_1,\mathfrak F_2\leq\mathfrak H\right\}
\end{equation}
(and the same holds in the reverse ordering).
\end{lemma}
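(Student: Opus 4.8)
\emph{Proof proposal.} The plan is to exhibit an explicit candidate for the least element of $X(\mathfrak F_1,\mathfrak F_2)$ — a maximal compression of the boundary of the ``common region above $F_1$ and $F_2$'' — and then to verify the two required properties. By Lemma \ref{pltopequiv} I work throughout in the PL category, and I fix a labelling $\pm$ of the ends of $M$. Choose PL representatives $F_1,F_2$ of $\mathfrak F_1,\mathfrak F_2$ in minimal position, so that every circle of $F_1\cap F_2$ is essential on both $F_1$ and $F_2$ (arranged by the usual innermost-disk argument, using that $F_1,F_2$ are incompressible and $M$ irreducible). Let $R$ be the closure of the component of $(M\setminus F_1)_+\cap(M\setminus F_2)_+$ containing a neighbourhood of the $+$ end; as in the proof of Lemma \ref{existupperandlowerbounds}, $\partial R$ is a connected surface separating the two ends of $M$, and $\partial R\subseteq F_1\cup F_2$. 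Pushing $\partial R$ slightly into the interior of $R$ yields a PL surface $G$ which separates the two ends, is disjoint from $F_1\cup F_2$, satisfies $F_1,F_2\leq G$, and has $(M\setminus G)_+\subseteq R$. Applying Lemma \ref{maximalcompression} to $G$ produces a class $\mathfrak H_0\in\mathcal S(M)$ such that every $\mathfrak F\in\mathcal S(M)$ with a representative $\leq G$ satisfies $\mathfrak F\leq\mathfrak H_0$, and every $\mathfrak F$ with a representative $\geq G$ satisfies $\mathfrak F\geq\mathfrak H_0$.

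The easy half is that $\mathfrak H_0\in X(\mathfrak F_1,\mathfrak F_2)$: since $F_1,F_2$ are incompressible they represent $\mathfrak F_1,\mathfrak F_2$, and $F_1,F_2\leq G$, so Lemma \ref{maximalcompression} gives $\mathfrak F_1,\mathfrak F_2\leq\mathfrak H_0$; in particular $X(\mathfrak F_1,\mathfrak F_2)\neq\emptyset$. (In the degenerate case where $\mathfrak F_1,\mathfrak F_2$ are comparable, say $\mathfrak F_2\leq\mathfrak F_1$, minimal position makes $F_1,F_2$ disjoint, $R=\overline{(M\setminus F_1)_+}$, and $\mathfrak H_0=\mathfrak F_1$, as it must be.)

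The substance is to show $\mathfrak H_0\leq\mathfrak H$ for every $\mathfrak H\in X(\mathfrak F_1,\mathfrak F_2)$. By the second conclusion of Lemma \ref{maximalcompression} it suffices to produce a representative $H$ of $\mathfrak H$ with $H\geq G$, i.e.\ with $H\subseteq(M\setminus G)_+$. Since $(M\setminus G)_+\subseteq R$, this amounts to isotoping a representative of $\mathfrak H$ off $F_1\cup F_2$ into the correct complementary region and then, as the resulting surface is a compact subset of the interior of $R$, pushing it off the collar between $\partial R$ and $G$. For the first step I would take $H$ transverse to $F_1\cup F_2$ and argue it may be taken disjoint from $F_1\cup F_2$: minimal-position/innermost-disk reductions (pushing $H$ across a ball cut off by a disk in $H$ and a disk in $F_1$ or $F_2$) eliminate every intersection circle that is inessential on $H$, on $F_1$, or on $F_2$ — an innermost-on-$H$ disk with boundary essential on $F_1$ would be a compressing disk for $F_1$, which is impossible — while the hypotheses $\mathfrak F_1\leq\mathfrak H$ and $\mathfrak F_2\leq\mathfrak H$ are used to dispose of circles that are essential on all surfaces at once; a natural scheme is to first make $H\cap F_1=\emptyset$ using $\mathfrak F_1\leq\mathfrak H$ and Lemma \ref{homotopicisotopic}, pass to the directed quasicylinder $M_1:=(M\setminus F_1)_+$ (in which $G$ again separates the two ends), and then repeat the analysis for $F_2\cap\overline{M_1}$ inside $M_1$. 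Once $H$ is disjoint from $F_1\cup F_2$ it lies in a single complementary component, which the hypotheses $\mathfrak F_1,\mathfrak F_2\leq\mathfrak H$ — together with antisymmetry (Lemma \ref{posetcomp}) to handle the degenerate possibility $\mathfrak H=\mathfrak F_i$ — force to be $R$. This gives $H\geq G$, hence $\mathfrak H\geq\mathfrak H_0$, so $\mathfrak H_0$ is the least element of $X(\mathfrak F_1,\mathfrak F_2)$. Running the same argument on $M$ with its two end-labels interchanged gives the statement for the reverse ordering.

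I expect the step of putting a representative of an \emph{arbitrary} upper bound simultaneously off both $F_1$ and $F_2$ to be the main obstacle: a naive ``minimise the total number of intersection circles with $F_1\cup F_2$'' argument does not by itself remove circles that are essential on $H$, on $F_1$, and on $F_2$ simultaneously, so the proof must genuinely combine the two relations $\mathfrak F_1\leq\mathfrak H$ and $\mathfrak F_2\leq\mathfrak H$ (via the cut-one-surface-at-a-time scheme above, or a double-curve-sum argument), and must carefully track which complementary region the final surface occupies. The remaining ingredients — connectedness of $\partial R$, the exact description of $R$ and of $(M\setminus G)_+$, and the comparable/degenerate cases — are routine and parallel the corresponding verifications in Lemmas \ref{maximalcompression} and \ref{existupperandlowerbounds}.
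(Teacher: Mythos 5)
Your overall architecture matches the paper's: take the boundary of the unbounded component of $(M\setminus F_1)_+\cap(M\setminus F_2)_+$, maximally compress it via Lemma \ref{maximalcompression}, and then show every $\mathfrak H\in X(\mathfrak F_1,\mathfrak F_2)$ has a representative lying in that region. But the step you yourself flag as the main obstacle --- producing a representative of an \emph{arbitrary} upper bound $\mathfrak H$ that is \emph{simultaneously} disjoint from $F_1$ and $F_2$ --- is a genuine gap, and the ``cut one surface at a time'' scheme you propose does not close it. After you isotope $H$ off $F_1$ so that $H\subseteq M_1=(M\setminus F_1)_+$, the surface $F_2$ is not contained in $M_1$: it meets $F_1$ in circles essential on both, so $F_2\cap\overline{M_1}$ is a properly embedded surface \emph{with boundary}, to which none of the closed-surface machinery of this section (Lemmas \ref{homotopicisotopic}, \ref{arcconst}, \ref{subposetone}) applies, and the hypothesis $\mathfrak F_2\leq\mathfrak H$ is a statement about closed surfaces in $M$ that does not obviously descend to $M_1$. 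If instead you work in $M$ and use Lemma \ref{homotopicisotopic} to drag a representative of $\mathfrak F_2$ onto $F_2$ carrying $H$ along, that ambient isotopy can push $H$ back across $F_1$; the two disjointness conditions are achieved by different ambient isotopies and nothing forces them to be compatible. Innermost-disk arguments cannot help with circles of $H\cap F_i$ that are essential on all surfaces involved, as you note.

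The paper resolves exactly this point with minimal surface theory rather than PL topology: in the case of tame ends it equips $M$ with a metric with convex boundary, takes least-area representatives of $\mathfrak F_1$, $\mathfrak F_2$, and of each $\mathfrak H\in X(\mathfrak F_1,\mathfrak F_2)$, and invokes Freedman--Hass--Scott \cite[p630 Theorem 6.2]{freedman} to conclude that the least-area representative of $\mathfrak H$ is automatically disjoint from both $F_1$ and $F_2$ at once (the correct side then follows from antisymmetry, as you observe). Since this requires compactness, the paper adds a second reduction you do not address under your method but would need under theirs: the general quasicylinder is handled by showing each truncated set $X(\mathfrak F_1,\mathfrak F_2;\mathfrak G)$ has a least element inside a tame sub-quasicylinder via Lemma \ref{subposettwo}, and that these least elements all coincide. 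A purely PL substitute along the lines of your ``double curve sum'' aside is plausible --- the paper's remark about Jaco--Rubinstein normal surface theory points the same way --- but as written your proof is missing its central ingredient.
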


\begin{proof}
By Lemma \ref{pltopequiv}, it suffices to work in the PL category.

First, let us deal with the case where $M$ has tame ends, that is $M$ is the interior of a compact PL manifold with boundary $(M,\partial M)$.  Equip $(M,\partial M)$ with a smooth structure and a smooth Riemannian metric so that the boundary is convex.

Now let us recall some facts about area minimizing surfaces.  The standard existence theory of Schoen--Yau \cite{schoenyau} and Sacks--Uhlenbeck \cite{sacksuhlenbeck,sacksuhlenbeck2} gives the existence of area minimizing maps in any homotopy class of $\pi_1$-injective surfaces.  By Osserman \cite{osserman1,osserman2} and Gulliver \cite{gulliver}, such area minimizing maps are immersions.  Now Freedman--Hass--Scott \cite{freedman} have proved theorems to the effect that area minimizing representatives of $\pi_1$-injective surfaces in irreducible three-manifolds have as few intersections as possible given their homotopy class.  Though they state their main results only for closed irreducible three-manifolds, they remark \cite[p634 \S 7]{freedman} that their results remain true for compact irreducible three-manifolds with convex boundary.

Suppose $\mathfrak F_1,\mathfrak F_2\in\mathcal S(M)$; let us show that $X(\mathfrak F_1,\mathfrak F_2)$ has a least element.  Pick area minimizing representatives $F_1,F_2$ in the homotopy classes of $\mathfrak F_1,\mathfrak F_2$.  By \cite[p626 Theorem 5.1]{freedman}, $F_1,F_2$ are embedded.  Let $((M\setminus F_1)_+\cap(M\setminus F_2)_+)_0$ denote the unique unbounded component of $(M\setminus F_1)_+\cap(M\setminus F_2)_+$, and define $F=\partial(((M\setminus F_1)_+\cap(M\setminus F_2)_+)_0)$.  It is easy to see that $F$ is connected and separates the two ends of $M$.  Note that $F_1,F_2$ can be isotoped to lie in $(M\setminus F)_-$.  Furthermore, if $\mathfrak G\in X(\mathfrak F_1,\mathfrak F_2)$ (and $\mathfrak G\ne\mathfrak F_1,\mathfrak F_2$), then $\mathfrak G$ has a least area representative $G$, which by \cite[p630 Theorem 6.2]{freedman} is disjoint from $F_1$ and $F_2$.  Thus $F_1,F_2\leq G$, so $F\leq G$.  Thus we may apply Lemma \ref{maximalcompression} to $F$ and produce an element $\mathfrak F\in\mathcal S(M)$ which is a least element of $X(\mathfrak F_1,\mathfrak F_2)$.  This finishes the proof for $M$ with tame ends.

Now let us treat the case of general $M$.  Consider the following sets:
\begin{equation}
X(\mathfrak F_1,\mathfrak F_2;\mathfrak G)=\left\{\mathfrak H\in\mathcal S(M)\bigm|\mathfrak F_1,\mathfrak F_2\leq\mathfrak H\leq\mathfrak G\right\}\subseteq X(\mathfrak F_1,\mathfrak F_2)
\end{equation}
We claim that it suffices to show that if $\mathfrak G\in X(\mathfrak F_1,\mathfrak F_2)$, then $X(\mathfrak F_1,\mathfrak F_2;\mathfrak G)$ has a least element.  To see this, we argue as follows.  If $\mathfrak G_1,\mathfrak G_2\in X(\mathfrak F_1,\mathfrak F_2)$ and $\mathfrak G_1\leq\mathfrak G_2$, then it is easy to see that the natural inclusion $X(\mathfrak F_1,\mathfrak F_2;\mathfrak G_1)\to X(\mathfrak F_1,\mathfrak F_2;\mathfrak G_2)$ sends the least element of the domain to the least element of the target (assuming both sets have least elements).  Since for every $\mathfrak G_1,\mathfrak G_2$, there exists $\mathfrak G_3$ greater than both (by Lemma \ref{existupperandlowerbounds}), we see that the least elements of all the sets $\{X(\mathfrak F_1,\mathfrak F_2;\mathfrak G)\}_{\mathfrak G\in X(\mathfrak F_1,\mathfrak F_2)}$ are actually the same element $\mathfrak F\in X(\mathfrak F_1,\mathfrak F_2)$.  We claim that this $\mathfrak F$ is a least element of $X(\mathfrak F_1,\mathfrak F_2)$.  This is trivial: if $\mathfrak G\in X(\mathfrak F_1,\mathfrak F_2)$, then $\mathfrak F$ is a least element of $X(\mathfrak F_1,\mathfrak F_2;\mathfrak G)$, so \emph{a fortiori} $\mathfrak F\leq\mathfrak G$.  Thus it suffices to show that each set $X(\mathfrak F_1,\mathfrak F_2;\mathfrak G)$ has a least element.

Let us show that $X(\mathfrak F_1,\mathfrak F_2;\mathfrak G)$ has a least element.  By Lemma \ref{existupperandlowerbounds} there exists $\mathfrak F_0\in\mathcal S(M)$ so that $\mathfrak F_0\leq\mathfrak F_1,\mathfrak F_2$.  Now pick PL representatives $F_0,G$ of $\mathfrak F_0,\mathfrak G$ with $F_0\leq G$.  Let $M_0=(M\setminus F_0)_+\cap(M\setminus G)_-$, which is a directed quasicylinder with tame ends.  Thus by the case dealt with earlier, $\mathcal S(M_0)$ is a lattice, so $\mathfrak F_1,\mathfrak F_2$ have a least upper bound in $\mathcal S(M_0)$.  By Lemma \ref{subposettwo}, $\mathcal S(M_0)\to\mathcal S(M)$ is an isomorphism onto the subset $\left\{\mathfrak F\in\mathcal S(M)\bigm|\mathfrak F_0\leq\mathfrak F\leq\mathfrak G\right\}$.  Thus we get the desired least element of $X(\mathfrak F_1,\mathfrak F_2;\mathfrak G)$.
\end{proof}

\begin{remark}
It should be possible to prove an existence result for area minimizing surfaces in \emph{any} DIFF quasicylinder (with appropriate conditions on the Riemannian metric near the ends).  In that case, the argument used for tame quasicylinders would apply in general (the results of Freedman--Hass--Scott \cite{freedman} extend as long as one has the existence of area minimizing representatives of $\pi_1$-injective surfaces in $M$).
\end{remark}

\begin{remark}
Jaco--Rubinstein \cite{jacorubinstein} have developed a theory of \emph{normal surfaces} in triangulated three-manifolds analogous to that of minimal surfaces in smooth three-manifolds with a Riemannian metric.  In particular, they have results analogous to those of Freedman--Hass--Scott \cite{freedman}.  Thus it is likely that we could eliminate entirely the use of the smooth category and the results in minimal surface theory for the proof of Lemma \ref{lattice}.
\end{remark}

\section{Tools applicable to arbitrary open/closed subsets of manifolds}\label{toolssec}

In our study of a hypothetical action of $\mathbb Z_p$ by homeomorphisms on a three-manifold, it will be essential to study certain properties of orbit sets (for example, their homology).  However, we do not have the luxury of assuming such sets are at all well behaved; the most we can hope for is that we will be able to construct $\mathbb Z_p$-invariant sets which are either open or closed.  Nevertheless, \v Cech cohomology is still a reasonable object in such situations.  The purpose of this section is to develop an elementary theory centered on Alexander duality for arbitrary open and closed subsets of a manifold.  We use these tools in the proof of Theorem \ref{hs3}, specifically in Sections \ref{constructZsec}--\ref{cohomologyZsec} to construct the set $Z$ and to study its properties.

In this (and all other) sections, we always take homology and cohomology with integer coefficients.  We denote by $H_\ast$ and $H^\ast$ singular homology and cohomology, and we let $\check H^\ast$ denote \v Cech cohomology.

\subsection{\v Cech cohomology}

\begin{lemma}\label{continuityaxiom}
For a compact subset $X$ of a manifold, the natural map below is an isomorphism:
\begin{equation}\label{cechdirectlimit}
\lim_{\begin{smallmatrix}U\supseteq X\cr U\text{ open}\end{smallmatrix}}H^\ast(U)\xrightarrow\sim\check H^\ast(X)
\end{equation}
\end{lemma}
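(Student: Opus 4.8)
The statement is a form of the \emph{continuity} property of \v Cech cohomology, and the plan is to deduce it from two standard facts: that \v Cech and singular cohomology agree on open subsets of manifolds, and that \v Cech cohomology carries a decreasing sequence of compacta to a direct limit. First I would reduce to the case that $M$ is metrizable. Choosing finitely many Euclidean charts whose union $W$ contains the compact set $X$, the space $W$ is an open subset of $M$, hence a manifold, and it is second countable, hence metrizable. Since the open neighborhoods of $X$ contained in $W$ are cofinal among all open neighborhoods of $X$ in $M$, and since $\check H^\ast(X)$ and the groups $H^\ast(U)$ do not depend on the ambient manifold, it suffices to prove the statement with $M$ replaced by $W$. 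So I fix a metric $d$ on $M=W$.

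Now set $N_n=\{y\in M:d(y,X)<1/n\}$ and $K_n=\overline{N_n}$. Since $X$ is compact and $M$ is locally compact, $K_n$ is compact for all large $n$; for such $n$ the $K_n$ form a decreasing sequence of compacta with $\bigcap_n K_n=X$, while the open sets $N_n$ are cofinal among all open neighborhoods of $X$ (because $d(X,M\setminus U)>0$ for every open $U\supseteq X$). Each $N_n$, being an open subset of a manifold, is a paracompact, locally contractible Hausdorff space --- indeed an ANR, hence of the homotopy type of a CW complex --- so the natural map $H^\ast(N_n)\to\check H^\ast(N_n)$ is an isomorphism. And, by the continuity axiom for \v Cech cohomology (Eilenberg--Steenrod), the natural map $\varinjlim_n\check H^\ast(K_n)\to\check H^\ast(X)$ is an isomorphism.

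To assemble these, note that $N_{n+1}\subseteq K_{n+1}\subseteq N_n\subseteq K_n$, so the directed systems $\{N_n\}$ and $\{K_n\}$ are mutually cofinal and the direct limits of $\check H^\ast$ over them coincide. Hence
\[
\varinjlim_{U\supseteq X}H^\ast(U)\;\cong\;\varinjlim_n\check H^\ast(N_n)\;\cong\;\varinjlim_n\check H^\ast(K_n)\;\cong\;\check H^\ast(X),
\]
where the first isomorphism combines cofinality of $\{N_n\}$ with the agreement of singular and \v Cech cohomology on each $N_n$; a routine diagram chase, using naturality of $H^\ast\to\check H^\ast$ with respect to the inclusions $X\hookrightarrow U$ (and the fact that inclusions compose), identifies the composite with the map in the statement.

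I do not expect a genuine difficulty here: the content resides entirely in the two quoted facts about \v Cech cohomology. If one preferred a self-contained argument, the continuity axiom is the substantive ingredient --- it is proved using the observation that, by compactness of $X=\varprojlim K_n$, every open cover of $X$ is refined by the restriction of an open cover of some $K_n$, together with a comparison of the associated nerves; alternatively it is a special case of the tautness of closed subsets of paracompact Hausdorff spaces. The only mild technical points are the reduction to a metrizable (in fact relatively compact) neighborhood, needed so that the $K_n$ are genuinely compact, and the final check that the composite displayed above is the natural map.
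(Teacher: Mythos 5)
Your proof is correct and takes essentially the same approach as the paper: the paper simply defers to Steenrod and Spanier for this continuity property, and your argument --- interleaving a cofinal sequence of open neighborhoods $N_n$ with their compact closures $K_n$, applying the continuity axiom to the $K_n$ and the agreement of singular and \v Cech cohomology on open subsets of manifolds --- is precisely the standard argument behind that citation. The technical points you flag (reducing to a relatively compact metrizable neighborhood so that the $K_n$ are genuinely compact, and the cofinality checks) are handled correctly.
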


\begin{proof}
This is due to Steenrod \cite{steenrod}; see also Spanier \cite[p419]{spanier}, the key fact being that \v Cech cohomology satisfies the ``continuity axiom''.
\end{proof}

\begin{lemma}\label{mayervietoris}
If $X$ and $Y$ are two compact subsets of a manifold, then there is a (Mayer--Vietoris) long exact sequence:
\begin{equation}\label{mveq}
\cdots\to\check H^\ast(X\cup Y)\to\check H^\ast(X)\oplus\check H^\ast(Y)\to\check H^\ast(X\cap Y)\to\check H^{\ast+1}(X\cup Y)\to\cdots
\end{equation}
\end{lemma}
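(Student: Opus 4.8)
The plan is to deduce this from the ordinary Mayer--Vietoris sequence for open sets, Lemma \ref{continuityaxiom}, and the exactness of filtered direct limits. Fix compact $X,Y$ in the manifold and consider the directed system of pairs $(U,V)$ of open sets with $U\supseteq X$ and $V\supseteq Y$, ordered by reverse inclusion in each coordinate; this system is directed since $(U_1\cap U_2,V_1\cap V_2)$ dominates $(U_1,V_1)$ and $(U_2,V_2)$. For each such pair, the usual Mayer--Vietoris long exact sequence reads
\begin{equation}
\cdots\to H^\ast(U\cup V)\to H^\ast(U)\oplus H^\ast(V)\to H^\ast(U\cap V)\to H^{\ast+1}(U\cup V)\to\cdots,
\end{equation}
and this sequence is natural with respect to the inclusions induced by shrinking $(U,V)$. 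Since direct limits over a directed index set are exact, passing to the limit produces a long exact sequence of the limit groups.

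It then remains to identify each limit group with the claimed \v Cech cohomology, which is a cofinality question. The $U$'s are cofinal among open neighborhoods of $X$ and the $V$'s among those of $Y$, so $\varinjlim H^\ast(U)=\check H^\ast(X)$ and $\varinjlim H^\ast(V)=\check H^\ast(Y)$ by Lemma \ref{continuityaxiom}; the sets $U\cup V$ are cofinal among open neighborhoods of $X\cup Y$ (take $U=V=W$ for any open $W\supseteq X\cup Y$), so $\varinjlim H^\ast(U\cup V)=\check H^\ast(X\cup Y)$. The one point requiring an argument is that the sets $U\cap V$ are cofinal among open neighborhoods of $X\cap Y$. Granting this, $\varinjlim H^\ast(U\cap V)=\check H^\ast(X\cap Y)$, and matching up the maps is immediate from naturality of Mayer--Vietoris and of the isomorphism in Lemma \ref{continuityaxiom}.

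I expect this last cofinality to be the main (and only) obstacle, and it reduces to normality of the ambient manifold. Given an open $W\supseteq X\cap Y$, the sets $X\setminus W$ and $Y\setminus W$ are disjoint compact sets (their intersection lies in $(X\cap Y)\setminus W=\emptyset$), so by metrizability of the manifold we may choose disjoint open sets $A\supseteq X\setminus W$ and $B\supseteq Y\setminus W$. Then $U:=W\cup A$ and $V:=W\cup B$ are open neighborhoods of $X$ and $Y$ with $U\cap V\subseteq W$ because $A\cap B=\emptyset$, and each $U\cap V$ is a neighborhood of $X\cap Y$ since $X\cap Y\subseteq W\subseteq U\cap V$. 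This establishes the cofinality and completes the proof; everything else in the argument is formal.
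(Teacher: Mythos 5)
Your proof is correct and follows the same route as the paper: take the singular Mayer--Vietoris sequences for open neighborhoods $U\supseteq X$, $V\supseteq Y$, pass to the filtered colimit (which is exact), and identify the limits via Lemma \ref{continuityaxiom}. The only difference is that you explicitly verify the cofinality of the sets $U\cap V$ among neighborhoods of $X\cap Y$ (via normality), a point the paper's one-line proof leaves implicit; your argument for it is correct.
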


\begin{proof}
For arbitrary open neighborhoods $U\supseteq X$ and $V\supseteq Y$, we have a Mayer--Vietoris sequence of singular cohomology for $U$ and $V$.  Taking the direct limit over all $U$ and $V$ gives a sequence of the form (\ref{mveq}) by Lemma \ref{continuityaxiom}, and it is exact since the direct limit is an exact functor.
\end{proof}

\subsection{Alexander duality}

Alexander duality for subsets of $\mathbb S^n$ is most naturally stated using reduced homology and cohomology, which we denote by $\tilde H$.

\begin{lemma}\label{alexanderduality}
Let $X\subseteq\mathbb S^n$ be a compact set.  Then:
\begin{equation}\label{alexdualeq}
\tilde{\check H}^\ast(X)=\tilde H_{n-1-\ast}(\mathbb S^n\setminus X)
\end{equation}
\end{lemma}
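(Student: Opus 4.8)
The plan is to prove (\ref{alexdualeq}) first for $X$ a finite union of compact convex sets lying in a single Euclidean chart of $\mathbb S^n$, by induction on the number of convex pieces using Lemma \ref{mayervietoris}, and then to pass to an arbitrary compact $X$ by a direct limit, invoking Lemma \ref{continuityaxiom} on the cohomology side and the compact support of singular chains on the homology side.

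To run the induction I would first fix, for every compact $X\subseteq\mathbb S^n$, a \emph{natural} ``Alexander duality'' homomorphism $D_X\colon\tilde{\check H}^\ast(X)\to\tilde H_{n-1-\ast}(\mathbb S^n\setminus X)$; one convenient choice uses $\tilde{\check H}^\ast(X)=\varinjlim_{W\supseteq X}\tilde H^\ast(W)$ (Lemma \ref{continuityaxiom}) together with the composite $H^k(W)\xrightarrow{\,\cap[\mathbb S^n]\,}H_{n-k}(\mathbb S^n,\mathbb S^n\setminus W)\xrightarrow{\ \partial\ }H_{n-k-1}(\mathbb S^n\setminus W)\to H_{n-k-1}(\mathbb S^n\setminus X)$, the last map induced by $\mathbb S^n\setminus W\subseteq\mathbb S^n\setminus X$; what matters is that $D$ is natural with respect to inclusions $X\subseteq X'$, which is clear from this description. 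With $D$ in hand the induction proceeds as follows. For the base case, a single compact convex set $X$ in a chart is contractible, so $\tilde{\check H}^\ast(X)=0$, and $\mathbb S^n\setminus X$ is homeomorphic to $\mathbb R^n$ (since $X$ is ambient-isotopic to a point inside the chart), so $\tilde H_\ast(\mathbb S^n\setminus X)=0$ as well, and $D_X$ is trivially an isomorphism. The case $X=X_1\sqcup X_2$ of a disjoint union is reduced to the connected case by comparing the singular Mayer--Vietoris sequence of the open cover $\{\mathbb S^n\setminus X_1,\mathbb S^n\setminus X_2\}$ of $\mathbb S^n=\mathbb S^n\setminus(X_1\cap X_2)$ with the natural maps relating $\tilde{\check H}^\ast(X_1\sqcup X_2)$ to $\tilde{\check H}^\ast(X_1)\oplus\tilde{\check H}^\ast(X_2)$, using $\tilde H_\ast(\mathbb S^n)=0$ for $\ast\neq n$ and $\tilde H_n(\mathbb S^n\setminus X_i)=0$. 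For the inductive step, write a connected $X$ as $A\cup B$ with $A$ one convex piece and $B$ the union of the remaining $m-1$ pieces; then $A\cap B$ is a union of at most $m-1$ convex sets, and it is nonempty because $X$ is connected and $A,B$ are closed. Compare the reduced form of the Mayer--Vietoris sequence of Lemma \ref{mayervietoris} for $A\cup B$ (valid since $A\cap B\neq\emptyset$) with the reduced singular Mayer--Vietoris sequence of the cover $\{\mathbb S^n\setminus A,\mathbb S^n\setminus B\}$ of $\mathbb S^n\setminus(A\cap B)$: the maps $D$ for $A$, $B$, $A\cap B$ and $A\cup B$ fit into a commuting ladder between these two long exact sequences in which degree $k$ on the Čech side matches degree $n-1-k$ on the singular side and the connecting maps correspond. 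Since $D$ is an isomorphism for $A$, $B$ and $A\cap B$ by the inductive hypothesis together with the disjoint-union case, the five lemma shows $D_X$ is an isomorphism.

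For general compact $X\subseteq\mathbb S^n$ I would, assuming $X\neq\mathbb S^n$ (the contrary case being immediate), fix a point of $\mathbb S^n\setminus X$, work in the resulting copy of $\mathbb R^n$, and choose compact neighborhoods $X_\alpha\supseteq X$, each a finite union of closed round balls, with $\bigcap_\alpha X_\alpha=X$ and with the interiors $\mathring X_\alpha$ cofinal among all open neighborhoods of $X$. Then Lemma \ref{continuityaxiom} gives $\tilde{\check H}^\ast(X)=\varinjlim_\alpha\tilde{\check H}^\ast(X_\alpha)$, while $\mathbb S^n\setminus X=\bigcup_\alpha(\mathbb S^n\setminus X_\alpha)$ is a directed union of open sets, so $\tilde H_\ast(\mathbb S^n\setminus X)=\varinjlim_\alpha\tilde H_\ast(\mathbb S^n\setminus X_\alpha)$ because singular chains have compact support. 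By naturality the maps $D_{X_\alpha}$ are compatible with both direct systems, and each is an isomorphism by the previous paragraph, so $D_X$ is an isomorphism. The step I expect to require the most care is pinning down the natural duality map $D$ precisely enough --- in particular the signs of the connecting homomorphisms --- that the Mayer--Vietoris ladder in the inductive step genuinely commutes; the disjoint-union and low-degree bookkeeping and the final limiting argument are then formal.
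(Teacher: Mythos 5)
Your argument is correct in outline, but it reaches the key input by a genuinely different (and much longer) route than the paper does. Both proofs share the same limiting skeleton: establish the duality for a cofinal family of nice approximants of $X$, check naturality under inclusion, and then pass to the limit using Lemma \ref{continuityaxiom} on the \v Cech side and compact supports of singular chains on the homology side. The paper's approximants are open sets $U\supseteq X$ with smooth boundary, for which $\tilde H^\ast(U)\cong\tilde H_{n-1-\ast}(\mathbb S^n\setminus U)$ is simply quoted as well known (it is Poincar\'e--Lefschetz duality for the compact manifold-with-boundary $\mathbb S^n\setminus U$), so the entire proof reduces to the naturality square plus the limit. You instead take compact approximants that are finite unions of closed balls and prove the duality for them from scratch by Mayer--Vietoris induction on the number of convex pieces; this is essentially the standard textbook proof of Alexander duality, and it buys self-containedness at the price of having to construct the duality map $D$ explicitly and verify that it intertwines the two Mayer--Vietoris connecting homomorphisms --- exactly the point you flag as delicate. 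Two small repairs to your write-up: the relative cap product naturally produces a map $H^k(W)\to H_{n-k}(\mathbb S^n,\mathbb S^n\setminus X)$ (cap with the image of $[\mathbb S^n]$ under $H_n(\mathbb S^n)\to H_n(\mathbb S^n,\mathbb S^n\setminus X)\cong H_n(W,W\setminus X)$), not a map into $H_{n-k}(\mathbb S^n,\mathbb S^n\setminus W)$ as written, so $D_X$ is better defined by following this with the connecting map of the pair $(\mathbb S^n,\mathbb S^n\setminus X)$; and a compact convex set is not ambient-isotopic to a point (it is merely cellular), though the fact you actually need --- $\tilde H_\ast(\mathbb S^n\setminus X)=0$ --- follows by writing $\mathbb S^n\setminus X$ as an increasing union of complements of closed convex bodies, each of which is an open $n$-cell.
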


\begin{proof}
If $U\subseteq\mathbb S^n$ is an open subset with smooth boundary, then it is well-known that there is a natural isomorphism $\tilde H^\ast(U)\xrightarrow\sim\tilde H_{n-1-\ast}(\mathbb S^n\setminus U)$.  Furthermore, for $U\supseteq V$ the following diagram commutes:
\begin{equation}
\begin{CD}
\tilde H^\ast(U)@>\sim >>\tilde H_{n-1-\ast}(\mathbb S^n\setminus U)\cr
@VVV@VVV\cr
\tilde H^\ast(V)@>\sim >>\tilde H_{n-1-\ast}(\mathbb S^n\setminus V)
\end{CD}
\end{equation}
Now consider the direct limit over all such open sets $U\supseteq X$ to get an isomorphism of the direct limits.  This family of sets $U$ forms a final system of neighborhoods of $X$, so by Lemma \ref{continuityaxiom} the direct limit on the left is $\tilde{\check H}^\ast(X)$.  The direct limit on the right is $\tilde H_{n-1-\ast}(\mathbb S^n\setminus X)$ by elementary properties of singular homology.
\end{proof}

\subsection{The plus operation}\label{plusopsec}

If we have a bounded subset $X\subseteq\mathbb R^3$, we want to be able to consider ``$X$ union all of the bounded connected components of $\mathbb R^3\setminus X$'' (we think of this operation as a way to simplify the set $X$, which could be very wild).  The following definition makes this precise.

\begin{definition}\label{plusdeff}
For a bounded set $X\subseteq\mathbb R^3$, we define:
\begin{equation}\label{plusdefeq}
X^+=\bigcap_{\begin{smallmatrix}\text{open }U\supseteq X\cr U\text{ bounded}\cr H_2(U)=0\end{smallmatrix}}U
\end{equation}
By Lemma \ref{alexanderduality}, for bounded open sets $U\subseteq X$, we have $H_2(U)=0\iff\check H^0(\mathbb R^3\setminus U)=\mathbb Z$.  With this reformulation, it is easy to see that if $U$ and $V$ both appear in the intersection, then so does $U\cap V$.  Let us call $X\mapsto X^+$ the \emph{plus operation}.
\end{definition}

\begin{lemma}\label{plusprop}
The plus operation satisfies the following properties:\\
\indent i.\ $X\subseteq Y\implies X^+\subseteq Y^+$.\\
\indent ii.\ $X^{++}=X^+$.\\
\indent iii.\ If $X$ is closed and bounded, then $\mathbb R^3\setminus X^+$ is the unbounded component of $\mathbb R^3\setminus X$.\\
\indent iv.\ If $X$ is closed and bounded, then $X=X^+\iff\check H^2(X)=0\iff\mathbb R^3\setminus X$ is connected.
\end{lemma}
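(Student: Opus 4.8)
The plan is to establish the four parts in order, using Alexander duality (Lemma \ref{alexanderduality}) as the main bookkeeping tool, together with the reformulation noted in Definition \ref{plusdeff} that for a bounded open $U$, $H_2(U) = 0 \iff \check H^0(\mathbb R^3 \setminus U) = \mathbb Z$ (i.e.\ $\mathbb R^3 \setminus U$ is connected, since $\mathbb R^3 \setminus U$ is nonempty when $U$ is bounded).

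\textbf{Part (i): monotonicity.} If $X \subseteq Y$, then every bounded open $U \supseteq Y$ with $H_2(U) = 0$ also satisfies $U \supseteq X$ and $H_2(U) = 0$, so it appears in the intersection defining $X^+$. Hence $X^+ \subseteq U$ for each such $U$, and intersecting over all of them gives $X^+ \subseteq Y^+$.

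\textbf{Part (iii): identification of $\mathbb R^3 \setminus X^+$ for closed bounded $X$.} This is the crux, and I expect it to be the main obstacle, since it requires controlling what the infimum in \eqref{plusdefeq} actually produces. Let $W$ denote the unbounded component of $\mathbb R^3 \setminus X$. First I would show $\mathbb R^3 \setminus X^+ \supseteq W$, equivalently $X^+ \subseteq \mathbb R^3 \setminus W$: one must exhibit, for any point $q \in W$, a bounded open $U \supseteq X$ with $H_2(U) = 0$ avoiding $q$. Take a large ball $B$ containing $X$ in its interior; then $\mathbb R^3 \setminus B$ lies in $W$, and one can find an open set $U$ with $X \subseteq U \subseteq B$, $U$ connected, $q \notin U$, and $\mathbb R^3 \setminus U$ connected (thicken a path-connected neighborhood of $X$ inside $B \setminus \{q\}$ slightly, using that $X$ is compact; connectivity of the complement can be arranged because $W \cup \{\text{bounded components}\}$ can be routed around). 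For the reverse inclusion $\mathbb R^3 \setminus X^+ \subseteq W$, i.e.\ $X^+ \supseteq \mathbb R^3 \setminus W = X \cup (\text{bounded components of } \mathbb R^3 \setminus X)$: any bounded open $U$ appearing in \eqref{plusdefeq} has $\mathbb R^3 \setminus U$ connected and closed; since $\mathbb R^3 \setminus U \subseteq \mathbb R^3 \setminus X$ meets the unbounded component $W$ (as $U$ is bounded) and is connected, it must lie entirely in $W$; hence $U \supseteq \mathbb R^3 \setminus W$. Intersecting over all such $U$ gives $X^+ \supseteq \mathbb R^3 \setminus W$. Combining, $\mathbb R^3 \setminus X^+ = W$. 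In particular $X^+$ is closed and bounded, which makes iterating the operation sensible.

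\textbf{Part (ii): idempotence.} From (i) applied with $Y = X$ one expects $X^+ \subseteq X^{++}$ is not immediate; rather, I would argue as follows. First, for bounded $X$, one checks $X \subseteq X^+$ directly from \eqref{plusdefeq} (every $U$ in the intersection contains $X$). Hence by (i), $X^+ \subseteq X^{++}$. For the reverse, note $X^+$ is closed and bounded by (iii) (applied to $\overline X$, or directly — $X^+$ is an intersection of the closures of the relevant $U$, or one observes it equals $\mathbb R^3 \setminus W$ with $W$ open), and by (iii) its complement $\mathbb R^3 \setminus X^+ = W$ is connected, so $\check H^0(\mathbb R^3 \setminus X^+) = \mathbb Z$. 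Now take any bounded open $U \supseteq X^+$ with $H_2(U) = 0$: I claim such $U$ also contains $X^{++}$, but more directly, since $X^+$ itself has connected complement, one can find arbitrarily small such $U$ (thicken $X^+$ slightly), and these $U$ shrink down to $X^+$; hence $X^{++} = \bigcap U \subseteq X^+$ as well. The key sub-point is that $X^+$ admits a neighborhood basis of bounded open sets with connected complement, which follows from $\mathbb R^3 \setminus X^+$ being open and connected (take $U = \mathbb R^3 \setminus C$ for $C$ a large closed connected subset of $W$ exhausting it).

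\textbf{Part (iv): the equivalences for closed bounded $X$.} The chain is $X = X^+ \iff \check H^2(X) = 0 \iff \mathbb R^3 \setminus X$ connected. For the second equivalence: by Alexander duality (Lemma \ref{alexanderduality}), $\check H^2(X) = \tilde H_0(\mathbb S^3 \setminus X)$, and since $X$ is bounded in $\mathbb R^3$ the complement $\mathbb S^3 \setminus X$ is just $(\mathbb R^3 \setminus X) \sqcup \{\infty\}$ with $\infty$ in the unbounded component, so $\tilde H_0(\mathbb S^3 \setminus X) = \tilde H_0(\mathbb R^3 \setminus X)$, which vanishes iff $\mathbb R^3 \setminus X$ is connected. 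For the first equivalence: by (iii), $\mathbb R^3 \setminus X$ connected means it equals its own unbounded component $W = \mathbb R^3 \setminus X^+$, i.e.\ $X = X^+$; conversely if $X = X^+$ then $\mathbb R^3 \setminus X = \mathbb R^3 \setminus X^+ = W$ is connected. This closes the loop.

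\textbf{Main obstacle.} The genuinely delicate step is Part (iii), specifically producing, for a given point $q$ in the unbounded component, a bounded open $U \supseteq X$ with $H_2(U) = 0$ (equivalently $\mathbb R^3 \setminus U$ connected) that misses $q$: one must simultaneously thicken the compact set $X$ to an open set and keep the complement connected while excising $q$. The cleanest route is to use that $\mathbb R^3 \setminus X^+ = W$ should be shown open and connected by working with closed connected exhausting subsets $C \subseteq W$ containing a neighborhood of $\infty$ and $q$, setting $U = \mathbb R^3 \setminus C$; verifying $U \supseteq X$ and that $C$ can always be taken connected (joining $q$ to $\infty$ within the connected open set $W$) is where the topology of $\mathbb R^3$ is used. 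Everything else is formal manipulation of Alexander duality and the definitions.
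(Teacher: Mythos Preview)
Your approach to parts (i), (iii), and (iv) is essentially the same as the paper's. In particular, for (iii) the paper does exactly what you describe as the ``cleanest route'': exhaust the unbounded component $V$ of $\mathbb R^3\setminus X$ by closed connected sets $V_i$ with bounded complement, so that $U_i=\mathbb R^3\setminus V_i$ appears in the defining intersection and $\bigcap_i U_i=\mathbb R^3\setminus V$; the reverse inclusion is your connectedness argument verbatim.

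The one point of divergence is (ii). The paper says it is immediate from the definition, and it is: for any bounded $X$ one has $X\subseteq X^+$, and every $U$ in the defining family for $X^+$ automatically contains $X^+$ (being one of the sets in the intersection), so the defining families for $X^+$ and for $(X^+)^+$ coincide. Your route through (iii) is unnecessarily roundabout and, as written, has a small gap: part (iii) assumes $X$ is closed, but for a general bounded $X$ the set $X^+$ need not be closed (e.g.\ if $X$ is an open ball then $X\in\mathcal F_X$ and $X^+=X$), so you cannot feed $X^+$ back into (iii) without further argument. Your parenthetical fixes (``applied to $\overline X$'', ``intersection of the closures'') do not repair this. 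The direct argument avoids the issue entirely.
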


\begin{proof}
Both (i) and (ii) are immediate from the definition.

For (iii), argue as follows.  Let $V$ denote the unbounded component of $\mathbb R^3\setminus X$.  As $V$ is open and connected, it has an exhaustion by closed connected subsets $V=\bigcup_{i=1}^\infty V_i$, where $V_1\subseteq V_2\subseteq\cdots$ and $\mathbb R^3\setminus V_i$ is bounded for all $i$.  By Lemma \ref{alexanderduality}, $H_2(\mathbb R^3\setminus V_i)=0$, and thus $X^+\subseteq\bigcap_{i=1}^\infty(\mathbb R^3\setminus V_i)=\mathbb R^3\setminus V$.  It remains to show the reverse inclusion, namely that if $W$ is a bounded component of $\mathbb R^3\setminus X$, then $W\subseteq X^+$.  Suppose $U\supseteq X$ is open, bounded, and $H_2(U)=0$.  Then $\mathbb R^3\setminus U=(\mathbb R^3\setminus(U\cup W))\cup(W\setminus U)$, which is a disjoint union of closed sets.  However $\mathbb R^3\setminus U$ is connected and $\mathbb R^3\setminus(U\cup W)$ is unbounded so \emph{a fortiori} it is nonempty.  Thus we must have $W\setminus U=\varnothing$, that is $U\supseteq W$.

For (iv), argue as follows.  Lemma \ref{alexanderduality} gives $\check H^2(X)=0\iff\mathbb R^3\setminus X$ is connected.  By (iii), we have $X=X^+\iff\mathbb R^3\setminus X$ is connected.
\end{proof}

\begin{lemma}\label{plusfinal}
Suppose $X\subseteq\mathbb R^3$ is closed and bounded with $X=X^+$.  If $\{U_\alpha\}_{\alpha\in A}$ is a final collection of open sets containing $X$, then so is $\{U_\alpha^+\}_{\alpha\in A}$.
\end{lemma}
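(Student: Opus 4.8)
The plan is to show directly that $\{U_\alpha^+\}_{\alpha\in A}$ is a final collection of open neighborhoods of $X$; that is, every open set $W\supseteq X$ contains some $U_\alpha^+$. First I would observe that each $U_\alpha^+$ is indeed an open set containing $X$: by Lemma \ref{plusprop}(iii) applied to the closed bounded set $\mathbb R^3\setminus U_\alpha$ (or rather to a suitable closed bounded neighborhood of it — one must be slightly careful since $U_\alpha$ is open, so I would instead argue directly from Definition \ref{plusdeff} that $U_\alpha^+$ is open as an intersection of a filtered family whose members one can arrange to be nested and open, or note that $\mathbb R^3 \setminus U_\alpha^+$ is the unbounded component of the closed set $\mathbb R^3\setminus U_\alpha$, hence closed). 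And $X=X^+\subseteq U_\alpha^+$ by Lemma \ref{plusprop}(i). So the content is the finality statement.

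Next, given an open $W\supseteq X$, I would like to produce $\alpha$ with $U_\alpha^+\subseteq W$. The natural move is to first shrink $W$ to a bounded open set $W'$ with $X\subseteq W'\subseteq W$ and $H_2(W')=0$. This is possible precisely because $X=X^+$: since $\mathbb R^3\setminus X$ is connected (Lemma \ref{plusprop}(iv)), one can find such a $W'$ — for instance take a bounded open $W_0$ with $X\subseteq W_0\subseteq \overline{W_0}\subseteq W$, and then let $W'=(W_0)^+$, which satisfies $H_2(W')=0$ by the definition of the plus operation (the intersection defining $(W_0)^+$ consists of open sets with vanishing $H_2$, and this property passes to $W'$ itself because, as noted in Definition \ref{plusdeff}, the family is closed under finite intersection and one can check the limit retains $H_2=0$ via Lemma \ref{continuityaxiom} and Lemma \ref{alexanderduality}: $\mathbb R^3\setminus W'$ is connected). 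Crucially, by Lemma \ref{plusprop}(iii), $\mathbb R^3\setminus W'$ being the unbounded component of $\mathbb R^3\setminus W_0$, we still have $X\subseteq W'\subseteq \overline{W_0}\subseteq W$.

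Now since $\{U_\alpha\}$ is final, pick $\alpha$ with $U_\alpha\subseteq W'$. Then $U_\alpha^+\subseteq (W')^+$ by Lemma \ref{plusprop}(i). But $W'$ is a bounded open set with $H_2(W')=0$, so $W'$ itself appears in the intersection \eqref{plusdefeq} defining $(W')^+$ — hence $(W')^+\subseteq W'$, and combined with Lemma \ref{plusprop}(i) (monotonicity, with $W'$ replaced by a closed bounded approximant, or directly) we get $(W')^+= W'$. Wait — $(W')^+\subseteq W'$ is immediate since $W'$ is one of the sets being intersected; that suffices. Therefore $U_\alpha^+\subseteq (W')^+\subseteq W'\subseteq W$, as desired.

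The main obstacle, and the step requiring the most care, is the construction of the intermediate set $W'$ and verifying that the plus operation of a bounded open set is again open with vanishing $H_2$ — i.e., that $W'$ genuinely appears in its own defining intersection. This hinges on the reformulation in Definition \ref{plusdeff} ($H_2(U)=0 \iff \check H^0(\mathbb R^3\setminus U)=\mathbb Z$) together with the continuity of \v Cech cohomology (Lemma \ref{continuityaxiom}): passing to the filtered intersection of open sets with connected complement yields a set whose complement is still connected, so the plus of a bounded open set has vanishing $H_2$ and is open. Everything else is a formal consequence of Lemma \ref{plusprop}(i)--(iv) and the finality of $\{U_\alpha\}$.
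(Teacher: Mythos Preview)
Your overall strategy is the same as the paper's: find an intermediate open set $W'$ with $X\subseteq W'\subseteq W$ and $(W')^+=W'$, then use finality of $\{U_\alpha\}$ to pick $U_\alpha\subseteq W'$ and conclude $U_\alpha^+\subseteq (W')^+=W'\subseteq W$. The gap is in your construction of $W'$. You set $W'=(W_0)^+$ for some bounded open $W_0$ with $\overline{W_0}\subseteq W$ and then assert $W'\subseteq\overline{W_0}$. That containment is false in general: take $W_0=\{x:1<|x|<2\}$; then $(W_0)^+=B(0,2)$, which is not contained in $\overline{W_0}=\{x:1\le|x|\le 2\}$. In particular, if $X$ is a single point inside this shell and $W=\{x:\tfrac12<|x|<\tfrac52\}$, then $X=X^+$ and $\overline{W_0}\subseteq W$, yet $(W_0)^+\not\subseteq W$. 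Your invocation of Lemma~\ref{plusprop}(iii) here is also off: that lemma is stated for closed bounded sets, and in any case it would only tell you that $\mathbb R^3\setminus W'$ is the unbounded component of $\mathbb R^3\setminus W_0$, which says nothing about $W'\subseteq\overline{W_0}$.

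The hypothesis $X=X^+$ must be used to build $W'$, not merely mentioned. The paper does this directly: since $\mathbb R^3\setminus X$ is connected (Lemma~\ref{plusprop}(iv)), exhaust it by closed connected sets $V_1\subseteq V_2\subseteq\cdots$ with each $\mathbb R^3\setminus V_i$ bounded. Given $W$ (or, as the paper phrases it, given $U_\alpha$), the compact set $\mathbb R^3\setminus W\subseteq\mathbb R^3\setminus X$ is contained in some $V_i$, so $W':=\mathbb R^3\setminus V_i\subseteq W$. This $W'$ is open, bounded, contains $X$, and has connected complement $V_i$, hence $(W')^+=W'$. From here your argument goes through verbatim. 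So the fix is to replace your $(W_0)^+$ construction with this exhaustion argument; everything else you wrote is fine.
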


\begin{proof}
Since $\{U_\alpha\}_{\alpha\in A}$ is final, to show that $\{U_\alpha^+\}_{\alpha\in A}$ is final it suffices to show that for every $\alpha\in A$, there exists $\beta\in A$ such that $U_\beta^+\subseteq U_\alpha$.

Thus let us suppose $\alpha\in A$ is given.  As in the proof of Lemma \ref{plusprop}(iii), there exists an exhaustion by closed connected subsets $\mathbb R^3\setminus X=\bigcup_{i=1}^\infty V_i$, where $V_1\subseteq V_2\subseteq\cdots$ and $\mathbb R^3\setminus V_i$ is bounded for all $i$.  Then for sufficiently large $i$, we have $\mathbb R^3\setminus V_i\subseteq U_\alpha$.  On the other hand, for every $i$ there exists $\beta$ such that $U_\beta\subseteq\mathbb R^3\setminus V_i$.  But now $U_\beta^+\subseteq(\mathbb R^3\setminus V_i)^+=\mathbb R^3\setminus V_i\subseteq U_\alpha$ as needed.
\end{proof}

\begin{definition}
Let $X$ be a topological space.  We say $X$ \emph{has dimension zero} iff whenever $x\in U\subseteq X$ with $U$ open, there exists a clopen (closed and open) set $V\subseteq X$ with $x\in V\subseteq U$.  It is immediate that any subspace of a space of dimension zero also has dimension zero.
\end{definition}

\begin{remark}\label{orbithasdimzero}
If $M$ is a manifold with a continuous action of $\mathbb Z_p$, then every orbit is either a finite discrete set (if the stabilizer is $p^k\mathbb Z_p$) or a cantor set (if the stabilizer is trivial).  In particular, every orbit has dimension zero.  It follows that if $A\subseteq M$ is any finite set, then $\mathbb Z_pA$ has dimension zero, as does any subset thereof.
\end{remark}

\begin{lemma}\label{nodisconnect}
If $M$ is a manifold of dimension $n\geq 2$ and $X\subseteq M$ is closed and has dimension zero, then the map $H_0(M\setminus X)\to H_0(M)$ is an isomorphism.
\end{lemma}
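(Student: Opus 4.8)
The plan is to show that $X$ cannot separate $M$ locally, and then globalize. The statement $H_0(M\setminus X)\to H_0(M)$ is an isomorphism amounts to two assertions: (a) $M\setminus X$ is nonempty (surjectivity onto $H_0(M)=\mathbb Z$, using that $M$ is connected is not assumed — actually $H_0(M)$ may be larger, so more precisely we show $M\setminus X$ meets every component of $M$ and that two points of $M\setminus X$ lying in the same component of $M$ lie in the same component of $M\setminus X$); and (b) $M\setminus X$ has no more components than $M$. Since the question is purely about $\pi_0$ and is local in nature, I would first reduce to the case $M=\mathbb R^n$ (or an open ball), because any two points of $M\setminus X$ in the same component of $M$ can be joined by a path in $M$, which is covered by finitely many coordinate balls, and it suffices to push the path off $X$ inside each ball in turn.

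The key local statement is then: if $B\subseteq\mathbb R^n$ is an open ball ($n\geq 2$) and $X\subseteq B$ is closed (in $M$, hence $X\cap \bar B$ compact after shrinking slightly) of dimension zero, then $B\setminus X$ is connected. First I would handle the compact case via Alexander duality: for a compact $X\subseteq\mathbb S^n$ of dimension zero, $\check H^0(X)$ is the ring of continuous $\mathbb Z$-valued functions on $X$, but the \emph{reduced} group $\tilde{\check H}^{n-1}(X)$ vanishes for $n-1\geq 1$ since a dimension-zero compact space has \v Cech cohomological dimension zero (covering it by small clopen sets gives a cofinal system of covers with nerves that are disjoint unions of points). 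Hence by Lemma \ref{alexanderduality}, $\tilde H_0(\mathbb S^n\setminus X)=\tilde{\check H}^{n-1}(X)=0$, so $\mathbb S^n\setminus X$ is connected; removing one extra point (to pass from $\mathbb S^n$ to $\mathbb R^n$) keeps it connected since $n\geq 2$. To treat a general closed $X\subseteq M$ I exhaust: $M\setminus X=\bigcup_i (M_i\setminus X)$ where $M_i$ are suitable compact pieces, write the relevant $X\cap M_i$ as compact dimension-zero sets, and assemble connectedness along the path as described above.

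Concretely, the steps in order: (1) observe $H_0(M)\to$ is about components, so reduce to showing every point of $M$ is connected in $M\setminus X$ to a nearby point of $M\setminus X$, and any path in $M$ can be homotoped rel endpoints off $X$; (2) cover a given path by finitely many balls $B_1,\dots,B_k$ with $\overline{B_j}$ compact and contained in a chart, so that it suffices to replace a sub-path lying in one $B_j$; (3) inside a chart, with $X\cap\overline{B_j}$ compact and dimension zero, embed in $\mathbb S^n$ and apply Alexander duality (Lemma \ref{alexanderduality}) together with the vanishing of $\tilde{\check H}^{n-1}$ of a compact dimension-zero space to conclude $\mathbb S^n\setminus(X\cap\overline{B_j})$, hence $B_j\setminus X$, is path-connected; (4) also note $M\setminus X$ is nonempty in each component of $M$ by the same local argument (a ball minus a dimension-zero closed set is nonempty, indeed dense, since a dimension-zero set has empty interior in $\mathbb R^n$ for $n\geq 1$); (5) conclude the map on $H_0$ is an isomorphism.

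The main obstacle is step (3): establishing that a compact space of dimension zero (in the clopen-basis sense defined just above) has vanishing reduced \v Cech cohomology in positive degrees. This is where the precise definition of ``dimension zero'' does the work — one shows the clopen covers are cofinal among all open covers and have nerves which are disjoint unions of simplices (equivalently, refine any open cover by a finite clopen \emph{partition}), so the \v Cech complex computing $\check H^{\geq 1}$ is exact. Everything else is routine point-set topology and the bookkeeping of patching local connectedness along a compact path; the duality input is exactly Lemma \ref{alexanderduality} already proved above.
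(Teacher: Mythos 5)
Your core input is the same as the paper's: a compact dimension-zero space admits a cofinal system of finite disjoint clopen covers, hence has vanishing reduced \v Cech cohomology in positive degrees, and Alexander duality (Lemma \ref{alexanderduality}) then kills $\tilde H_0$ of the complement in $\mathbb S^n$. Your globalization differs in flavor (pushing a path off $X$ ball by ball, rather than the paper's argument that $H^0(M)\to H^0(M\setminus X)$ is an isomorphism because locally constant functions extend uniquely across each chart), but both globalizations rest on the same local claim: for a chart $B\cong\mathbb R^n$, the set $B\setminus X$ is connected and nonempty.

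There is, however, a genuine gap in your step (3), which is exactly the point where you diverge from the paper. From the connectedness of $\mathbb S^n\setminus(X\cap\overline{B_j})$ you cannot conclude (``hence'') that $B_j\setminus X$ is connected: the complement of a compact set in $\mathbb S^n$ can be connected while its intersection with a smaller open ball is disconnected (remove an arc from $\mathbb R^2$ and intersect with a small ball centered on the arc). The set you actually need to handle is $X\cap B_j$, which is closed in $B_j\cong\mathbb R^n$ but need not be compact, so the compact Alexander-duality case does not apply to it directly. The conclusion you want is still true, but it requires the intermediate step the paper supplies: for a closed, possibly noncompact, dimension-zero set $X\subseteq\mathbb R^n$, pass to the one-point compactification and verify that $X\cup\{\infty\}\subseteq\mathbb S^n$ still has dimension zero (this uses that $X\cap\overline{B(\mathbf 0,R)}$ is compact and dimension zero, so it can be covered by finitely many small clopen pieces whose complement in $X\cup\{\infty\}$ is a clopen neighborhood of $\infty$); only then does the compact case give $\mathbb R^n\setminus X=\mathbb S^n\setminus(X\cup\{\infty\})$ connected. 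With that step inserted, your path-pushing bookkeeping (choosing subdivision points off $X$, which is legitimate since $X$ has empty interior) goes through and the rest of your outline is fine.
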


\begin{proof}
First, let us deal with the case $M=\mathbb S^n$ (thus $X$ is compact).  Pick a metric on $\mathbb S^n$ and fix $\epsilon>0$.  Since $X$ has dimension zero and is compact, we can cover $X$ with finitely many clopen sets $U_i\subseteq X$ ($1\leq i\leq N$) of diameter $\leq\epsilon$.  Now $\{U_i\setminus\bigcup_{1\leq j<i}U_j\}_{i=1}^N$ is a cover of $X$ by \emph{disjoint} open sets of diameter $\leq\epsilon$.  Since $\epsilon>0$ was arbitrary, this implies that $\check H^i(X)=0$ for $i>0$.  Thus by Lemma \ref{alexanderduality}, $\tilde H_0(\mathbb S^n\setminus X)=\tilde{\check H}^{n-1}(X)=0$ (since $n\geq 2$), which is sufficient.

Second, let us deal with the case $M=\mathbb R^n$.  Define $\bar X=X\cup\{\infty\}\subseteq\mathbb R^n\cup\{\infty\}=\mathbb S^n$.  We claim that $\bar X$ has dimension zero; to see this, it suffices to produce small clopen neighborhoods of $\infty\in\bar X$.  Fix $R<\infty$.  Since $X$ has dimension zero and $X\cap\overline{B(\mathbf 0,R)}$ is compact, there exist finitely many clopen sets $U_i\subseteq X$ ($1\leq i\leq N$) of diameter $\leq 1$ with $X\cap\overline{B(\mathbf 0,R)}\subseteq\bigcup_{i=1}^NU_i$.  Then $\bar X\setminus\bigcup_{i=1}^NU_i$ is clopen in $\bar X$ and contains $\infty$.  Since $R<\infty$ was arbitrary, we get arbitrarily small clopen neighborhoods of $\infty\in\bar X$.  Hence $\bar X$ has dimension zero, so by the case $M=\mathbb S^n$ dealt with above, we have $H_0(\mathbb R^n\setminus X)=H_0(\mathbb S^n\setminus\bar X)=\mathbb Z$, which is sufficient.

Now let us deal with the case of general $M$.  It suffices to show that $H^0(M)\to H^0(M\setminus X)$ is an isomorphism (recall that $H^0$ is just the group of locally constant maps to $\mathbb Z$).  Now consider any $p\in M$, and pick an open neighborhood $p\in U\cong\mathbb R^n$.  Since $X$ has dimension zero, so does $U\cap X$.  Thus by the case $M=\mathbb R^n$ dealt with above, $U\setminus X$ is connected and nonempty.  Thus any locally constant function $M\setminus X\to\mathbb Z$ can be extended uniquely to a locally constant function $M\to\mathbb Z$, as needed.
\end{proof}

\section{Nonexistence of faithful actions of $\mathbb Z_p$ on three-manifolds}

\begin{proof}[Proof of Theorem \ref{hs3}]

Let $M$ be a connected three-manifold and suppose there is a continuous injection of topological groups $\mathbb Z_p\to\operatorname{Homeo}(M)$.

\subsection{Step 1: Reduction to a local problem in $\mathbb R^3$}\label{localsec}

Let $B(r)$ denote the open ball of radius $r$ centered at the origin $\mathbf 0$ in $\mathbb R^3$.  Fix a small positive number $\eta=2^{-10}$.  In this section, we reduce to the case where $M$ is an open subset of $\mathbb R^3$ such that:\\
\indent i.\ $B(1)\subseteq M\subseteq B(1+\eta)$.\\
\indent ii.\ $d_{\mathbb R^3}(x,\alpha x)\leq\eta$ for all $x\in M$ and $\alpha\in\mathbb Z_p$.\\
\indent iii.\ No subgroup $p^k\mathbb Z_p$ fixes an open neighborhood of $\mathbf 0$.\\
This reduction is valid in any dimension.

\begin{definition}
We say that an action of a group $G$ on a topological space $X$ is \emph{locally of finite order} at a point $x\in X$ if and only if some subgroup of finite index $G'\leq G$ fixes an open neighborhood of $x$.  The action is \emph{(globally) of finite order} if and only if some subgroup of finite index $G'\leq G$ fixes all of $X$.
\end{definition}

Newman's theorem \cite[p6 Theorem 2]{newman} (see also Dress \cite[p204 Theorem 1]{dress} or Smith \cite{smith}) implies that on a connected manifold, a group action which is (everywhere) locally of finite order is globally of finite order.  Thus if $\mathbb Z_p\to\operatorname{Homeo}(M)$ is injective, then there exists a point $m\in M$ where the action is not locally of finite order.  In other words, no subgroup $p^k\mathbb Z_p$ fixes an open neighborhood of $m$.

Now fix some homeomorphism between $B(3)\subseteq\mathbb R^3$ and an open set in $M$ containing $m$, such that $\mathbf 0$ is identified with $m$.  Note that (by continuity of the map $\mathbb Z_p\to\operatorname{Homeo}(M)$) for sufficiently large $k$, we have $p^k\mathbb Z_pB(2)\subseteq B(3)$ and $d_{\mathbb R^3}(x,\alpha x)\leq\eta$ for all $x\in B(2)$ and $\alpha\in p^k\mathbb Z_p$.  Thus the action of $p^k\mathbb Z_p$ (which is isomorphic to $\mathbb Z_p$) on $M':=p^k\mathbb Z_pB(1)$ satisfies (i), (ii), and (iii) above.  Thus it suffices to replace $(M,\mathbb Z_p)$ with $(M',p^k\mathbb Z_p)$ and derive a contradiction.

Henceforth we assume that $M$ is an open subset of $\mathbb R^3$ satisfying (i), (ii), and (iii) above.

\subsection{Step 2: An invariant metric and the plus operation in $M$}\label{invmetricsec}

Equip $M$ with the following $\mathbb Z_p$-invariant metric, which induces the same topology as $d_{\mathbb R^3}$:
\begin{equation}
d_{\operatorname{inv}}(x,y):=\int_{\mathbb Z_p}d_{\mathbb R^3}(\alpha x,\alpha y)\,d\mu_{\operatorname{Harr}}(\alpha)
\end{equation}
We will never mention $d_{\operatorname{inv}}$ again explicitly, but we will often write $N^{\operatorname{inv}}_\epsilon$ for the open $\epsilon$-neighborhood in $M$ with respect to $d_{\operatorname{inv}}$.  Note that if $X\subseteq M$ is $\mathbb Z_p$-invariant, then so is $N^{\operatorname{inv}}_\epsilon(X)$.

\begin{lemma}\label{plusinvariant}
If $X\subseteq B(1)\subseteq M$, then $X^+\subseteq B(1)\subseteq M$.  If in addition $X$ is $\mathbb Z_p$-invariant, then so is $X^+$.
\end{lemma}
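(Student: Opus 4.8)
The plan is to verify the two assertions separately, both by manipulating the defining intersection \eqref{plusdefeq} for the plus operation. For the first claim, suppose $X \subseteq B(1)$. I would note that $B(1)$ itself is a bounded open set containing $X$, and I need to check that $H_2(B(1)) = 0$ — this is immediate since $B(1)$ is a ball, hence contractible. Therefore $B(1)$ appears among the sets $U$ in the intersection defining $X^+$, so $X^+ \subseteq B(1)$. The inclusion $B(1) \subseteq M$ is given by hypothesis (i) of Step 1. Note that $X^+$ is defined via open subsets of $\mathbb R^3$, not of $M$, but this causes no difficulty precisely because $X^+$ lands inside $B(1) \subseteq M$; implicitly one should remark that $X^+$ as computed in $\mathbb R^3$ is the relevant object.

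For the second claim, assume in addition that $X$ is $\mathbb Z_p$-invariant; I want to show $X^+$ is $\mathbb Z_p$-invariant. The key point is that the collection of sets $U$ appearing in the intersection \eqref{plusdefeq} is itself $\mathbb Z_p$-invariant as a \emph{family}. Concretely, fix $\alpha \in \mathbb Z_p$. Since $\alpha$ acts on $M$ by a homeomorphism and $B(1) \subseteq M$, if $U$ is a bounded open set with $X \subseteq U \subseteq B(1)$ and $H_2(U) = 0$, then $\alpha U$ is again a bounded open subset of $B(1)$ (using hypothesis (ii), which keeps points within distance $\eta$, together with the fact — already established in the first part — that we may restrict attention to $U \subseteq B(1)$), it contains $\alpha X = X$, and $H_2(\alpha U) = H_2(U) = 0$ since $\alpha|_U$ is a homeomorphism. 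Hence $\alpha$ permutes the indexing family of the intersection, so $\alpha X^+ = X^+$. Since $\alpha \in \mathbb Z_p$ was arbitrary, $X^+$ is $\mathbb Z_p$-invariant.

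The only mild subtlety — and the step I expect to require the most care — is the interaction between the ``bounded subset of $\mathbb R^3$'' setting of Definition \ref{plusdeff} and the fact that $\mathbb Z_p$ acts only on $M$, not on all of $\mathbb R^3$. The resolution is exactly the first part of the lemma: once we know $X^+ \subseteq B(1)$, every set $U$ in the defining intersection can be intersected with $B(1)$ without changing the value of $X^+$ (this uses that the family is closed under finite intersections, as noted in Definition \ref{plusdeff}), and then the homeomorphisms $\alpha$, which are defined on $B(1)$, act on all relevant sets. So one should first prove the $B(1)$-confinement, then use it to make sense of the $\mathbb Z_p$-action on the indexing family, and only then conclude invariance.
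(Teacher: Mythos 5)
Your first claim and its proof are fine and match the paper. The second part, however, contains a genuine error in the key step. You assert that if $X\subseteq U\subseteq B(1)$ with $U$ bounded, open, and $H_2(U)=0$, then $\alpha U$ is again a subset of $B(1)$, citing hypothesis (ii) of Step 1. But (ii) only guarantees $d_{\mathbb R^3}(x,\alpha x)\leq\eta$, so all you can conclude is $\alpha U\subseteq N_\eta(B(1))\cap M\subseteq B(1+\eta)$; the homeomorphism $\alpha$ does not preserve $B(1)$, and in general $\alpha U\not\subseteq B(1)$. Consequently the family $\{U:X\subseteq U\subseteq B(1),\ H_2(U)=0\}$ is \emph{not} permuted by $\mathbb Z_p$, and your argument that ``$\alpha$ permutes the indexing family'' breaks down for the family you chose. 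Your closing paragraph compounds the issue by suggesting that intersecting every $U$ with $B(1)$ lets the $\alpha$'s ``act on all relevant sets''; they act, but they do not map that restricted family to itself.

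The fix is exactly one step away from what you wrote, and it is what the paper does: use the restriction $U\subseteq B(1)$ only to show that $X^+$ is unchanged if one intersects over the larger family $\mathcal F_M=\{U\text{ open}:X\subseteq U\subseteq M,\ H_2(U)=0\}$ (all such $U$ are automatically bounded since $M\subseteq B(1+\eta)$). Indeed, writing $\mathcal F$ for the full defining family and $\mathcal F_{B(1)}$ for the $B(1)$-restricted one, closure under finite intersection together with $B(1)\in\mathcal F$ gives $\bigcap_{\mathcal F}U=\bigcap_{\mathcal F_{B(1)}}U$, and since $\mathcal F_{B(1)}\subseteq\mathcal F_M\subseteq\mathcal F$ all three intersections agree. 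The family $\mathcal F_M$ \emph{is} permuted by $\mathbb Z_p$, because each $\alpha$ is a homeomorphism of $M$ onto itself fixing $X$ setwise and preserving $H_2$. Applying your permutation argument to $\mathcal F_M$ rather than $\mathcal F_{B(1)}$ completes the proof.
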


\begin{proof}
The first statement is clear since $B(1)$ appears in the intersection (\ref{plusdefeq}) defining $X^+$.

Now suppose that in addition $X$ is $\mathbb Z_p$-invariant.  As remarked in Definition \ref{plusdeff}, the collection of open sets appearing in the intersection (\ref{plusdefeq}) is closed under finite intersection.  In particular, we could consider just those $U$ contained in $B(1)$.  Thus \emph{a fortiori} $X^+$ is the intersection of all open sets $U$ with $X\subseteq U\subseteq M$ and $H_2(U)=0$.  The set of such $U$ is permuted by $\mathbb Z_p$, so $X^+$ is $\mathbb Z_p$-invariant.
\end{proof}

\subsection{Step 3: Construction of an interesting compact set $Z\subseteq M$}\label{constructZsec}

In this section, we construct a compact $\mathbb Z_p$-invariant set $Z\subseteq M$ such that:\\
\indent 1.\ On a coarse scale, $Z$ looks like a handlebody of genus two.\\
\indent 2.\ The action of $\mathbb Z_p$ on $\check H^1(Z)$ is nontrivial.\\
We follow a natural strategy to construct the set $Z$.  We first let $K$ be the orbit under $\mathbb Z_p$ of a closed handlebody of genus two, and we then construct a set $L$ as the orbit under $\mathbb Z_p$ of a small arc connecting two points on the boundary of $K$.  Then we let $Z=K\cup L$.  This strategy is complicated by the fact that the orbit set $K$ is \emph{a priori} very wild, and we will need to investigate the connectedness properties of it and other sets in order to make the construction work.  We will take advantage of the tools we have developed in Section \ref{plusopsec} and the setup from Sections \ref{localsec}--\ref{invmetricsec}.  Even with this preparation, though, careful verification of each step takes us a while.

\begin{figure}
\centering
\includegraphics{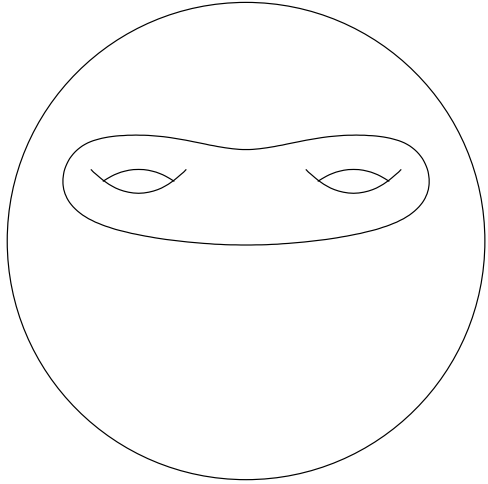}
\caption{Diagram of $K_0$ inside $B(1)\subseteq M$.}\label{diagram}
\end{figure}

Let $x_0\in B(\eta)\setminus\operatorname{Fix}\mathbb Z_p$ (such an $x_0$ exists by Step 1(iii)).

\begin{definition}[see Figure \ref{diagram}]\label{Kdef}
Let $K_0\subseteq B(1)$ be a closed handlebody of genus two whose unique point of lowest $z$-coordinate is $x_0$.  Note that the closed $\eta$-neighborhood (in the Euclidean metric) of $K_0$ is a larger handlebody isotopic to $K_0$.  Let $K=(\mathbb Z_pK_0)^+$.
\end{definition}

We think of $K$ as being illustrated essentially by Figure \ref{diagram}; it's just $K_0$ plus some wild fuzz of width $\eta$.  Note that $K$ is compact, $\mathbb Z_p$-invariant (by Lemma \ref{plusinvariant}), and $K=K^+$.  Also, $K$ is connected; actually, we will need the following stronger fact.

\begin{lemma}\label{Kpathconnected}
If $A\subseteq\partial K$ is any finite set, then $K\setminus\mathbb Z_pA$ is path-connected.
\end{lemma}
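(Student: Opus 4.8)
The plan is to exploit the structure of $K$ recorded above: by Lemma \ref{plusprop}(iii), $\mathbb R^3\setminus K$ is the unbounded component of $\mathbb R^3\setminus\mathbb Z_pK_0$ (note that $\mathbb Z_pK_0$ is compact, being a continuous image of $\mathbb Z_p\times K_0$), so
\begin{equation}\label{Kdecomp}
K\setminus\mathbb Z_pA=\bigl(\mathbb Z_pK_0\setminus\mathbb Z_pA\bigr)\cup\bigcup_W W,
\end{equation}
where $W$ ranges over the bounded components of $\mathbb R^3\setminus\mathbb Z_pK_0$ (each of which is disjoint from $\mathbb Z_pA\subseteq\mathbb Z_pK_0$). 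Each translate $\alpha K_0$ ($\alpha\in\mathbb Z_p$) is a genus-two handlebody, and because the action moves every point of $M$ by at most $\eta$, all of these translates share a common ball: if we take $K_0$ to contain a round ball $B(c_0,2\eta)$ (compatible with Definition \ref{Kdef} since $\eta$ is small), then $d_{\mathbb R^3}(\alpha^{-1}c_0,c_0)\leq\eta$ for all $\alpha$, so $\alpha^{-1}c_0\in\overline{B(c_0,\eta)}\subseteq\operatorname{int}K_0$, i.e.\ $c_0\in\operatorname{int}(\alpha K_0)\subseteq\operatorname{int}K$. I would next pin down $\mathbb Z_pA$: since $K$ is $\mathbb Z_p$-invariant (Lemma \ref{plusinvariant}), so is $\partial K$, hence $\mathbb Z_pA\subseteq\partial K$; in particular $\mathbb Z_pA\cap\operatorname{int}K=\varnothing$, so $c_0\notin\mathbb Z_pA$, and, since $\operatorname{int}(\alpha K_0)\subseteq\operatorname{int}K$,
\begin{equation}\label{badloc}
\mathbb Z_pA\cap\alpha K_0\subseteq\partial(\alpha K_0)\qquad\text{for every }\alpha\in\mathbb Z_p.
\end{equation}
Recall also that $\mathbb Z_pA$ is closed and has dimension zero by Remark \ref{orbithasdimzero}.

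First I would show $\alpha K_0\setminus\mathbb Z_pA$ is path-connected for each $\alpha$. Put $S=\mathbb Z_pA\cap\alpha K_0$, which by (\ref{badloc}) is a closed, dimension-zero subset of the closed surface $\partial(\alpha K_0)$. By Lemma \ref{nodisconnect} applied to the $2$-manifold $\partial(\alpha K_0)$, the set $\partial(\alpha K_0)\setminus S$ is connected. Choosing a collar $\partial(\alpha K_0)\times[0,1)\hookrightarrow\alpha K_0$, whose rays meet $\partial(\alpha K_0)$, and hence $S$, only at their endpoints, we see that inside $\alpha K_0\setminus S$ the set $\partial(\alpha K_0)\setminus S$ lies in the closure of the connected open set $\operatorname{int}(\alpha K_0)$; thus $\alpha K_0\setminus S$ is connected, and being a manifold with boundary it is locally path-connected, hence path-connected. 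Since $c_0\in\operatorname{int}(\alpha K_0)\setminus\mathbb Z_pA$ lies in every one of these sets, the union $\mathbb Z_pK_0\setminus\mathbb Z_pA=\bigcup_\alpha(\alpha K_0\setminus\mathbb Z_pA)$ is path-connected.

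By (\ref{Kdecomp}) it then remains to join each bounded component $W$ of $\mathbb R^3\setminus\mathbb Z_pK_0$ to $c_0$ within $K\setminus\mathbb Z_pA$; note $W$ is path-connected, being open and connected. I expect this to be the main obstacle, because $\partial W$ may be wild. The crucial claim is that some point of $\partial W$ accessible by an arc inside $W$ avoids $\mathbb Z_pA$. On the one hand, the accessible points are dense in $\partial W$: given $q\in\partial W$ and a nearby $w\in W$, the first point at which the straight segment from $w$ to $q$ meets $\partial W$ is accessible and is at distance at most $|w-q|$ from $q$. On the other hand, $\partial W$ does not have dimension zero: it is a nonempty closed set, and $\mathbb R^3\setminus\partial W=W\sqcup(\mathbb R^3\setminus\overline W)$ is disconnected (both pieces being open and nonempty, as $W$ is bounded), whereas Lemma \ref{nodisconnect} forbids a closed dimension-zero set from disconnecting $\mathbb R^3$. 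If every accessible boundary point of $W$ lay in $\mathbb Z_pA$, then, $\mathbb Z_pA$ being closed and the accessible points dense, we would get $\partial W\subseteq\mathbb Z_pA$, which would force $\partial W$ to have dimension zero, a contradiction. Hence there is an accessible point $q\in\partial W\setminus\mathbb Z_pA$; since $q\in\partial W\subseteq\mathbb Z_pK_0$ we have $q\in\alpha K_0$ for some $\alpha$, so $q\in\alpha K_0\setminus\mathbb Z_pA$. Concatenating an arc in $W\cup\{q\}$ from a point of $W$ to $q$ with an arc in $\alpha K_0\setminus\mathbb Z_pA$ from $q$ to $c_0$ joins $W$ to $c_0$ inside $K\setminus\mathbb Z_pA$. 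Combined with the previous paragraph, every point of $K\setminus\mathbb Z_pA$ is joined to $c_0$, proving the lemma.
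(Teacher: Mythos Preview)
Your argument is correct and follows the same three–stage skeleton as the paper's proof: first show that $\mathbb Z_pK_0\setminus\mathbb Z_pA$ is path-connected, then join each bounded component of $\mathbb R^3\setminus\mathbb Z_pK_0$ to it.  The differences are in execution.  For the first stage, the paper simply notes that all the interiors $\alpha K_0^\circ$ overlap (your common-ball observation in disguise) and then pushes any boundary point of $\alpha K_0$ straight inward along a collar; it does not invoke Lemma~\ref{nodisconnect} on the surface $\partial(\alpha K_0)$ as you do, though your route is perfectly fine.  For the second stage, the paper's argument is considerably shorter than yours: starting from $x\in W$, it takes a path from $x$ to infinity in $\mathbb R^3\setminus\mathbb Z_pA$ (which exists by Lemma~\ref{nodisconnect} applied to $\mathbb R^3$) and stops at the first time this path meets $\partial W\subseteq\mathbb Z_pK_0$; up to that time the path stays in $W\subseteq K$ and at that time it lands in $\mathbb Z_pK_0\setminus\mathbb Z_pA$.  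This bypasses your density-of-accessible-points and dimension-of-$\partial W$ argument entirely, though what you wrote also works.  Finally, your assumption that $K_0$ contains a ball of radius $2\eta$ is exactly the implicit ``thickness'' hypothesis the paper uses when it asserts $\mathbb Z_p(K_0^\circ)$ is path-connected.
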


\begin{proof}
By definition, $K_0^\circ$ is path-connected.  Now the action of $\mathbb Z_p$ is within $\eta=2^{-10}$ of the identity, so $\mathbb Z_p(K_0^\circ)$ is also path-connected.

Now suppose $x\in\mathbb Z_pK_0\setminus\mathbb Z_pA$.  Then there exists $\alpha\in\mathbb Z_p$ so that $\alpha^{-1}x\in K_0$.  From $\alpha^{-1}x$ there is clearly a path inwards to $K_0^\circ$, and this is disjoint from $\mathbb Z_pA$ since $\mathbb Z_pA\subseteq\partial K$.  Translating this path by $\alpha$ shows that $\mathbb Z_pK_0\setminus\mathbb Z_pA$ is path-connected.

Now suppose $x\in K\setminus\mathbb Z_pA$.  If $x\notin\mathbb Z_pK_0$, then let $V$ denote the (open) connected component of $K\setminus\mathbb Z_pK_0$ containing $x$.  By Remark \ref{orbithasdimzero} and Lemma \ref{nodisconnect}, we can find a path from $x$ to infinity in $\mathbb R^3\setminus\mathbb Z_pA$.  There is a time when this path first hits $\partial V\subseteq\mathbb Z_pK_0$, thus giving us a path from $x$ to $\mathbb Z_pK_0$ contained in $K\setminus\mathbb Z_pA$.  Thus $K\setminus\mathbb Z_pA$ is path-connected, as was to be shown.
\end{proof}

\begin{lemma}\label{constructL}
There exist points $x_1,x_2\in K$ with $x_1\notin\operatorname{Fix}\mathbb Z_p$ and $\mathbb Z_px_1\cap\mathbb Z_px_2=\varnothing$, along with a compact $\mathbb Z_p$-invariant set $L\subseteq M$ of diameter $\leq 4\eta$ so that $L=L^+$, $K\cap L=\mathbb Z_px_1\cup\mathbb Z_px_2$, and $L$ contains a path from $x_1$ to $x_2$.
\end{lemma}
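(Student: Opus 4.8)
The idea is to build the set $L$ as the orbit of a small arc $\gamma$ connecting two suitable points $x_1,x_2\in\partial K$, then ``plus'' the resulting orbit set to kill bounded complementary components, and finally verify the listed properties. The main work is in choosing the endpoints $x_1,x_2$ so that their orbits are disjoint from each other and so that the orbit of the arc does not accidentally re-intersect $K$ in more than the prescribed set, and in controlling the diameter after applying the plus operation. First I would choose $x_1\in\partial K\setminus\operatorname{Fix}\mathbb Z_p$ arbitrarily; such a point exists since $\partial K$ is uncountable and $\operatorname{Fix}\mathbb Z_p$ being the full fixed set would contradict Step 1(iii) together with Newman's theorem. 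Then I would choose $x_2\in\partial K$ close to $x_1$ (say within $\eta$) but lying in a different orbit; this is possible because the orbit $\mathbb Z_p x_1$ is a Cantor set (or finite), hence has empty interior in $\partial K$, so arbitrarily near $x_1$ there are points of $\partial K$ outside $\mathbb Z_p x_1$, and among those we can avoid the countably-many-orbits issue by a Baire-category or direct measure-theoretic argument (the union of orbits meeting a fixed neighborhood and equal to $\mathbb Z_p x_1$ is a single Cantor set). Having fixed $x_1,x_2$, pick a short PL arc $\gamma_0$ from $x_1$ to $x_2$ of diameter $\leq\eta$ which meets $K$ only at its endpoints — possible since $K$ is within $\eta$ of the PL handlebody $K_0$ and we can route $\gamma_0$ just outside $K$ using a collar of $\partial K_0$.

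Next I would set $L_0=\mathbb Z_p\gamma_0$ and $L=L_0^+$. Since each translate $\alpha\gamma_0$ has diameter $\leq\eta$ and the action moves points by at most $\eta$, the orbit $L_0$ has diameter $\leq 3\eta$; moreover $L_0$ lies in the $\eta$-neighborhood of the $2\eta$-ball around $x_1$, which is bounded, so $L_0^+$ is defined and (by the analogue of Lemma \ref{plusinvariant}, or directly by the fact that the plus operation cannot increase diameter by more than a controlled amount — actually $L\subseteq\operatorname{conv}(L_0)$ is false in general, but $L^+$ is contained in the union of $L_0$ with its bounded complementary components, all of which lie within the convex hull, hence within diameter $\leq 4\eta$ after a small slack). $L$ is $\mathbb Z_p$-invariant by the invariant-plus lemma (Lemma \ref{plusinvariant} applied in a ball of radius $\leq 4\eta$ around $x_1$), and $L=L^+$ by idempotence (Lemma \ref{plusprop}(ii)). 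The arc $\gamma_0\subseteq L_0\subseteq L$ gives the required path from $x_1$ to $x_2$, and $x_1\notin\operatorname{Fix}\mathbb Z_p$ by construction, with $\mathbb Z_p x_1\cap\mathbb Z_p x_2=\varnothing$.

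The remaining point — and the one I expect to be the genuine obstacle — is to show $K\cap L=\mathbb Z_p x_1\cup\mathbb Z_p x_2$, i.e.\ that applying the plus operation to $L_0$ does not drag in points of $K$. The issue is that a bounded component $W$ of $\mathbb R^3\setminus L_0$ could conceivably contain points of $K$; if so those points land in $L=L_0^+$. To rule this out I would argue that $K\setminus(\mathbb Z_p x_1\cup\mathbb Z_p x_2)$ is connected and unbounded-reaching: by Lemma \ref{Kpathconnected} (applied with $A=\{x_1,x_2\}$), $K\setminus\mathbb Z_p\{x_1,x_2\}$ is path-connected, and from any of its points there is a path to infinity in $\mathbb R^3$ avoiding $L_0$ — indeed avoiding $L_0\cap\partial K$, because $\mathbb Z_p\gamma_0$ meets $\partial K$ only in $\mathbb Z_p\{x_1,x_2\}$ by our routing of $\gamma_0$ in the exterior collar, so one can first escape $K$ through its interior (disjoint from $L_0$ since $\gamma_0\cap K^\circ_0=\varnothing$ up to the $\eta$-fuzz) and then run off to infinity outside the $4\eta$-ball, which $L_0$ does not reach. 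Hence $K\setminus\mathbb Z_p\{x_1,x_2\}$ lies in the unbounded component of $\mathbb R^3\setminus L_0$, which by Lemma \ref{plusprop}(iii) is exactly $\mathbb R^3\setminus L$; therefore $K\cap L\subseteq\mathbb Z_p x_1\cup\mathbb Z_p x_2$, and the reverse inclusion is immediate since $x_1,x_2\in\partial K\subseteq K$ and $x_1,x_2\in\gamma_0\subseteq L$ together with $\mathbb Z_p$-invariance of both $K$ and $L$. Assembling these observations gives all the asserted properties of $L$.
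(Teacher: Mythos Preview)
Your overall architecture matches the paper's, but the step where you ``pick a short PL arc $\gamma_0$ from $x_1$ to $x_2$ of diameter $\leq\eta$ which meets $K$ only at its endpoints'' is a genuine gap, and the justification you give (``route $\gamma_0$ just outside $K$ using a collar of $\partial K_0$'') does not work. The set you must avoid is $K=(\mathbb Z_pK_0)^+$, not $K_0$; while it is true that $K_0\subseteq K\subseteq\overline{N_\eta(K_0)}$, the boundary $\partial K$ is \emph{a priori} wild, has no collar, and the shell $\overline{N_\eta(K_0)}\setminus K_0^\circ$ may be largely filled by $K$. For an arbitrary point $x_1\in\partial K$ there is no reason an arc leaving $x_1$ can exit $K$ without re-entering it in every neighborhood of $x_1$ (think of boundaries with infinitely many folds accumulating at $x_1$); and even if you can escape at $x_1$, there is no reason you can arrange to land precisely at a pre-chosen $x_2\in\partial K$ without touching $K$ in between. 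A related secondary issue is that your argument for the existence of $x_1\in\partial K\setminus\operatorname{Fix}\mathbb Z_p$ is not complete: Newman's theorem does not directly rule out $\partial K\subseteq\operatorname{Fix}\mathbb Z_p$.

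The paper resolves both problems with a single geometric trick: it takes $x_1$ to be a point of \emph{lowest $z$-coordinate} in $K$. This immediately forces $x_1\in\mathbb Z_pK_0$ (points added by the plus operation lie in bounded complementary components and cannot be extremal), whence a short case analysis gives $x_1\notin\operatorname{Fix}\mathbb Z_p$. More importantly, a short segment \emph{straight downward} from $x_1$ lies entirely outside $K$ (except at $x_1$), so one gets a clean exit. The path is then continued inside $B(x_1,\eta)\setminus\mathbb Z_px_1$ (using Lemma~\ref{nodisconnect}) to a point $x_2'\in K^\circ$, and $x_2$ is \emph{defined} as $\gamma(t)$ where $t=\min(\gamma^{-1}(K)\setminus\{0\})$; this guarantees by construction that $L_0=\gamma([0,t])$ meets $K$ only at its endpoints. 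In other words, the paper does not try to pick $x_2$ in advance and then connect, but lets the first return to $K$ determine $x_2$; this is the missing idea in your sketch. Once you have such an $L_0$, your remaining verification of $K\cap L=\mathbb Z_px_1\cup\mathbb Z_px_2$ via Lemma~\ref{Kpathconnected} and Lemma~\ref{plusprop}(iii) is essentially the paper's argument.
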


\begin{proof}
Let $x_1\in K$ be any point of lowest $z$-coordinate in $K$.  We claim that $x_1\notin\operatorname{Fix}\mathbb Z_p$.  First, observe that since $x_1\in K=(\mathbb Z_pK_0)^+$ is a point of lowest $z$-coordinate, necessarily $x_1\in\mathbb Z_pK_0$.  If $x_1\in K_0$, then $x_1=x_0$ (recall from Definition \ref{Kdef} that $x_0$ is the unique point of lowest $z$-coordinate in $K_0$), and by definition $x_0\notin\operatorname{Fix}\mathbb Z_p$.  On the other hand, if $x_1\in(\mathbb Z_pK_0)\setminus K_0$, then certainly $x_1\notin\operatorname{Fix}\mathbb Z_p$.  Thus $x_1\notin\operatorname{Fix}\mathbb Z_p$.

Since $x_1\in\mathbb Z_pK_0$, we may pick $x_2'\in\mathbb Z_p(K_0^\circ)\subseteq K^\circ$ which is arbitrarily close to $x_1$.  Specifically, let us fix $x_2'\in K^\circ\cap B(x_1,\eta)$.  Note that $x_2'\notin\mathbb Z_px_1$ since $x_1\notin K^\circ$.  Now we claim that there exists a continuous path $\gamma:[0,1]\to B(x_1,\eta)$ such that:\\
\indent i.\ $\gamma(0)=x_1$.\\
\indent ii.\ $\gamma(1)=x_2'$.\\
\indent iii.\ $\gamma^{-1}(\mathbb Z_px_1)=\{0\}$.\\
\indent iv.\ $0$ is not a limit point of $\gamma^{-1}(K)$.\\
To construct such a path $\gamma$, argue as follows.  First, define $\gamma:[0,\frac 12]\to B(x_1,\eta)$ to be some path straight downward from $x_1$ (this takes care of (i)).  Since $x_1$ is a point of smallest $z$-coordinate in $K\supseteq\mathbb Z_px_1$, this also takes care of (iv) and is consistent with (iii).  Now by Remark \ref{orbithasdimzero} and Lemma \ref{nodisconnect}, we know $B(x_1,\eta)\setminus\mathbb Z_px_1$ is path-connected, so we can find a path $\gamma:[\frac 12,1]\to B(x_1,\eta)\setminus\mathbb Z_px_1$ from $\gamma(\frac 12)$ to $x_2'$ (this takes care of (ii) and (iii)).  Splicing these two functions together gives $\gamma:[0,1]\to B(x_1,\eta)$ satisfying (i), (ii), (iii), and (iv).

Now by (ii) and (iv) we have that $t:=\min(\gamma^{-1}(K)\setminus\{0\})$ exists and is positive.  Let $x_2=\gamma(t)$, and define $L_0=\gamma([0,t])$.  By (iii), we have $\mathbb Z_px_1\cap\mathbb Z_px_2=\varnothing$.  Define $L=(\mathbb Z_pL_0)^+$, which is $\mathbb Z_p$-invariant by Lemma \ref{plusinvariant}.  By construction, $L$ contains a path from $x_1$ to $x_2$.  Certainly $L_0\subseteq B(x_1,\eta)$ so $L\subseteq B(x_1,2\eta)$ by Step 1(ii), and so $L$ has diameter $\leq 4\eta$.  It remains only to show that $K\cap L=\mathbb Z_px_1\cup\mathbb Z_px_2$ (certainly the containment $\supseteq$ is given by definition).

Note that $K\setminus\mathbb Z_pL_0=K\setminus(\mathbb Z_px_1\cup\mathbb Z_px_2)$, which by Lemma \ref{Kpathconnected} is path-connected.  Thus $K\setminus\mathbb Z_pL_0$ lies in a single connected component of $\mathbb R^3\setminus\mathbb Z_pL_0$.  Now $\mathbb Z_pL_0$ has diameter $\leq 4\eta$ as remarked above, and $K\setminus\mathbb Z_pL_0$ contains a large handlebody, so it must lie in the unbounded component of $\mathbb R^3\setminus\mathbb Z_pL_0$.  Hence $K\setminus(\mathbb Z_px_1\cup\mathbb Z_px_2)$ is disjoint from $(\mathbb Z_pL_0)^+=L$.
\end{proof}

\begin{definition}
Fix $x_1$, $x_2$, and $L$ as exist by Lemma \ref{constructL}, and define $Z=K\cup L$.
\end{definition}

\subsection{Step 4: The cohomology of $Z$}\label{cohomologyZsec}

\begin{lemma}\label{KLcohomology}
$Z=Z^+$, and the action of $\mathbb Z_p$ on $\check H^1(Z)$ is nontrivial.
\end{lemma}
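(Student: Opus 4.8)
The plan is to establish the two assertions separately, using Mayer--Vietoris for \v Cech cohomology (Lemma \ref{mayervietoris}) applied to the decomposition $Z = K \cup L$, whose pieces $K$, $L$, and $K \cap L = \mathbb Z_p x_1 \cup \mathbb Z_p x_2$ are all compact and whose cohomology (or near-cohomology) we already understand. First I would record that $\check H^\ast(K)$ agrees with the \v Cech cohomology of a genus-two handlebody: $K = (\mathbb Z_p K_0)^+$ is $K_0$ together with wild fuzz of width $\eta$, and since $K$ admits a final system of handlebody neighborhoods (the Euclidean $\delta$-neighborhoods of $K_0$ for $\delta$ slightly larger than $\eta$ are genus-two handlebodies isotopic to $K_0$), Lemma \ref{continuityaxiom} gives $\check H^0(K) = \mathbb Z$, $\check H^1(K) = \mathbb Z^2$, $\check H^2(K) = 0$. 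Similarly $L = L^+$ is a set of diameter $\le 4\eta$ which is ``small'' in the Alexander-duality sense; since $L = L^+$ we have $\check H^2(L) = 0$ by Lemma \ref{plusprop}(iv), and as $L$ contains a path from $x_1$ to $x_2$ and has a final system of connected neighborhoods, $\check H^0(L) = \mathbb Z$ (I should also note $L$ is connected). The intersection $K \cap L = \mathbb Z_p x_1 \cup \mathbb Z_p x_2$ is a disjoint union of two Cantor sets (or finite orbits), hence has dimension zero, so $\check H^i(K\cap L) = 0$ for $i > 0$ and $\check H^0(K \cap L) = C(K\cap L;\mathbb Z)$ is the ring of continuous $\mathbb Z$-valued functions on two Cantor sets.

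Next I would feed these into the Mayer--Vietoris sequence
\begin{equation*}
0 \to \check H^0(Z) \to \check H^0(K) \oplus \check H^0(L) \to \check H^0(K\cap L) \xrightarrow{\delta} \check H^1(Z) \to \check H^1(K) \oplus \check H^1(L) \to 0
\end{equation*}
(using $\check H^1(K\cap L) = 0$ and $\check H^2(K) = \check H^2(L) = 0$ to get $\check H^2(Z) = 0$ as well). From $\check H^2(Z) = 0$ and the fact that $Z$ is closed and bounded, Lemma \ref{plusprop}(iv) immediately yields $Z = Z^+$, which is the first assertion. (I would double-check boundedness: $Z \subseteq B(1)$ since $K \subseteq B(1)$ by Lemma \ref{plusinvariant} and $L \subseteq B(x_1, 2\eta) \subseteq B(1)$.) For the second assertion, the map $\check H^0(K) \oplus \check H^0(L) \to \check H^0(K\cap L)$ sends the generators to the function that is $1$ on all of $K\cap L$ minus... concretely, its image is the locally constant functions pulled back from the two-point quotient $\{K\cap L \text{ near } \mathbb Z_p x_1\} \sqcup \{\ldots \mathbb Z_p x_2\}$, i.e.\ a rank-two summand, and $\check H^0(Z) = \mathbb Z$ since $Z$ is connected. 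So the cokernel of this map, which injects into $\check H^1(Z)$ via $\delta$, is the quotient $C(K\cap L;\mathbb Z) / (\text{functions constant on each of the two clusters})$; call this $Q$. The point is that $Q$ is an enormous $\mathbb Z_p$-module: it is the reduced \v Cech cohomology $\tilde{\check H}^0(K\cap L)$ relative to the two basepoint-clusters, which records the connected-component structure of the two Cantor sets $\mathbb Z_p x_1$ and $\mathbb Z_p x_2$.

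The main obstacle, and the crux of the argument, is showing the $\mathbb Z_p$-action on $\check H^1(Z)$ is nontrivial, equivalently (since $\mathbb Z_p$ acts trivially on $\check H^1(K)\oplus\check H^1(L)$ — the handlebody cohomology $\mathbb Z^2$ carries no interesting action, being pinned down by the coarse geometry, and $\check H^1(L) = 0$ by dimension/smallness of $L$, which I'd verify via $L = L^+$ and Alexander duality) that $\mathbb Z_p$ acts nontrivially on the submodule $\delta(Q) \subseteq \check H^1(Z)$. This comes down to: $\mathbb Z_p$ acts nontrivially on $C(\mathbb Z_p x_1; \mathbb Z)$ modulo constants, because $x_1 \notin \operatorname{Fix}\mathbb Z_p$ — so $\mathbb Z_p x_1$ is a genuine Cantor set on which $\mathbb Z_p$ acts faithfully (being a quotient $\mathbb Z_p / \operatorname{Stab}(x_1)$ with $\operatorname{Stab}(x_1) = p^k\mathbb Z_p$ a proper subgroup), and the induced action on locally constant functions modulo constants is nontrivial. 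One must check this nontriviality survives the passage from $Q$ to $\delta(Q)$ to $\check H^1(Z)$: since $\delta$ is injective on $Q$ (its kernel is the image of the previous map, already quotiented out) and $\mathbb Z_p$-equivariant, nontriviality of the action on $Q$ — which I just reduced to nontriviality on the clopen-subset lattice of $\mathbb Z_p x_1 = \mathbb Z_p / p^k\mathbb Z_p$, a finite nontrivial transitive $\mathbb Z_p$-set — transfers directly. I expect the delicate bookkeeping to be: (a) identifying the image of $\check H^0(K)\oplus\check H^0(L) \to \check H^0(K\cap L)$ precisely as the functions factoring through the two "where does this point of $K\cap L$ attach to $K$" clusters, which requires knowing $K$ and $L$ are each connected near each cluster; and (b) confirming $\mathbb Z_p$ acts trivially on $\check H^1(K)$, which follows because a handlebody-neighborhood system can be chosen $\mathbb Z_p$-coarsely-invariantly and $\operatorname{Aut}(H^1) = GL_2(\mathbb Z)$ has no small subgroups, so the image of $\mathbb Z_p$ there is finite, and being a continuous image of a connected-in-the-relevant-sense... — more simply, $\mathbb Z_p$ is $\eta$-close to the identity and $\eta$ is tiny relative to the handlebody, so it cannot permute the two handles or act on $H^1(K_0)$ nontrivially. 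The whole proof is then just assembling these MV-sequence facts.
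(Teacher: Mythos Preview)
Your overall strategy---Mayer--Vietoris for $Z=K\cup L$, then reading off $\check H^2(Z)=0$ from $K=K^+$ and $L=L^+$, and finally arguing nontriviality via the connecting map $\delta:\check H^0(K\cap L)\to\check H^1(Z)$---is exactly the paper's. The first assertion $Z=Z^+$ goes through as you say (though your computation of $\check H^\ast(K)$ via ``handlebody neighborhoods of $K_0$ form a final system for $K$'' is not correct: those neighborhoods are only final if $K=\overline{N_\eta(K_0)}$, which nothing guarantees; fortunately you only need $\check H^2(K)=0$, which follows directly from $K=K^+$ and Lemma~\ref{plusprop}(iv)).

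The genuine gap is in your identification of the image of $\check H^0(K)\oplus\check H^0(L)\to\check H^0(K\cap L)$. You claim it consists of the functions constant on each of the two ``clusters'' $\mathbb Z_px_1$ and $\mathbb Z_px_2$, hence rank two, and build your quotient $Q$ accordingly. But nothing in the construction says $L$ is connected, nor that each orbit $\mathbb Z_px_i$ lies in a single component of $L$; all Lemma~\ref{constructL} gives is that $x_1$ and $x_2$ lie in the \emph{same} component of $L$. If $L$ has several components (e.g.\ if the translates $\alpha L_0$ are pairwise disjoint), then $\check H^0(L)$ contributes functions to the image that are genuinely non-constant on $\mathbb Z_px_1$. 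So your claimed image neither contains nor is contained in the true image, and your $Q$ is not a subquotient of $\check H^1(Z)$; the nontriviality argument does not transfer. The paper sidesteps this entirely: rather than computing the image, it observes only that every function in the image takes the same value at $x_1$ and at $x_2$ (since $x_1,x_2$ lie in the same component of both $K$ and $L$), and then exhibits the explicit $q=\mathbf 1_{p\mathbb Z_px_1}$ and $\alpha\equiv 1\pmod p$ for which $(\alpha q-q)(x_1)=-1\ne 0=(\alpha q-q)(x_2)$. That single evaluation replaces all of your bookkeeping about clusters and renders the discussion of $\check H^1(K)$, $\check H^1(L)$, and the triviality of the $\mathbb Z_p$-action on them unnecessary.
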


\begin{proof}
The key to proving both statements is the following Mayer--Vietoris sequence (which exists and is exact by Lemma \ref{mayervietoris}):
\begin{equation}
\cdots\to\check H^\ast(Z)\to\check H^\ast(K)\oplus\check H^\ast(L)\to\check H^\ast(K\cap L)\to\check H^{\ast+1}(Z)\to\cdots
\end{equation}
Note that $\mathbb Z_p$ acts on all of these groups and that the maps are $\mathbb Z_p$-equivariant.

By Lemma \ref{constructL}, $K\cap L=\mathbb Z_px_1\cup\mathbb Z_px_2$, so by Remark \ref{orbithasdimzero}, $\check H^i(K\cap L)=0$ for $i>0$.  Thus we have $\check H^2(Z)=\check H^2(K)\oplus\check H^2(L)$.  Now applying Lemma \ref{plusprop}(iv) twice shows that $K=K^+$ and $L=L^+$ together imply that $Z=Z^+$.

Now let us show that $\mathbb Z_p$ acts on $\check H^1(Z)$ nontrivially.  We use the exact sequence:
\begin{equation}
\check H^0(K)\oplus\check H^0(L)\to\check H^0(K\cap L)\to\check H^1(Z)
\end{equation}
Note that $\check H^0$ is just the group of locally constant functions to $\mathbb Z$.  Thus it suffices to exhibit a locally constant function $q:K\cap L\to\mathbb Z$ and an element $\alpha\in\mathbb Z_p$ such that $\alpha q-q$ is not in the image of $\check H^0(K)\oplus\check H^0(L)$.  Let us define:
\begin{equation}
q(r)=\begin{cases}1&r\in p\mathbb Z_px_1\cr 0&r\in\bigcup_{a\in(\mathbb Z/p)\setminus\{0\}}(a+p\mathbb Z_p)x_1\cup\mathbb Z_px_2\end{cases}
\end{equation}
(observe by Lemma \ref{constructL} that $x_1\notin\operatorname{Fix}\mathbb Z_p$, so the sets on the right hand side are indeed disjoint).  Let $\alpha\in\mathbb Z_p$ be congruent to $1$ mod $p$.  Then we have:
\begin{equation}
(\alpha q-q)(r)=\begin{cases}-1&r\in p\mathbb Z_px_1\cr 1&r\in(1+p\mathbb Z_p)x_1\cr 0&r\in\bigcup_{a\in(\mathbb Z/p)\setminus\{0,1\}}(a+p\mathbb Z_p)x_1\cup\mathbb Z_px_2\end{cases}
\end{equation}
Now by Lemma \ref{constructL}, $x_1,x_2$ are in the same component of $L$, and by Lemma \ref{Kpathconnected} (taking $A=\varnothing$), they are in the same component of $K$.  Thus every function in the image of $\check H^0(L)\oplus\check H^0(K)$ assigns the same value to $x_1$ and $x_2$.  Clearly this is not the case for $\alpha q-q$, so we are done.
\end{proof}

We just proved that $Z=Z^+$, so by Lemma \ref{plusfinal}, $N^{\operatorname{inv}}_\epsilon(Z)^+$ is a final system of neighborhoods of $Z$.  Thus by Lemma \ref{continuityaxiom} we have $\check H^1(Z)=\varinjlim H^1(N^{\operatorname{inv}}_\epsilon(Z)^+)$ as abelian groups with an action of $\mathbb Z_p$ ($\mathbb Z_p$ acts on the latter since $N^{\operatorname{inv}}_\epsilon(Z)^+$ is $\mathbb Z_p$-invariant by Lemma \ref{plusinvariant}).  Since the action on the limit group is nontrivial, the following definition makes sense.

\begin{definition}\label{Udef}
Fix $\epsilon\in(0,\eta)$ such that the $\mathbb Z_p$ action on the image of the map $H^1(N^{\operatorname{inv}}_\epsilon(Z)^+)\to\check H^1(Z)$ is nontrivial.  Denote by $(N^{\operatorname{inv}}_\epsilon(Z)^+)_0$ the connected component containing $Z$, and define $U=(N^{\operatorname{inv}}_\epsilon(Z)^+)_0\setminus Z$, which is $\mathbb Z_p$-invariant by Lemma \ref{plusinvariant}.
\end{definition}

\subsection{Step 5: Special elements of $\mathcal S(U)$}

\begin{lemma}
The set $U$ is a quasicylinder in the sense of Definition \ref{approxsurf}.
\end{lemma}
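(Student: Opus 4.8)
The plan is to verify the three defining properties of a quasicylinder for $U$: that it is irreducible and orientable, that it has exactly two ends, and that $H_2(U)\cong\mathbb Z$. Orientability is immediate since $U$ is an open subset of $\mathbb R^3$. For the remaining two properties, the key is to exploit that $U$ is obtained by removing the set $Z$ (which satisfies $Z=Z^+$, looks coarsely like a genus-two handlebody, and is connected) from the connected component $(N^{\operatorname{inv}}_\epsilon(Z)^+)_0$ of a small neighborhood of $Z$.

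First I would handle the ends. The set $V:=(N^{\operatorname{inv}}_\epsilon(Z)^+)_0$ is an open connected subset of $B(1)\subseteq\mathbb R^3$ with $V=V^+$ locally near its ``outer'' side, so $\mathbb R^3\setminus V$ is connected (being, essentially by Lemma \ref{plusprop}(iii) applied to the closure, the unbounded component); thus $V$ has one ``outer'' end. On the ``inner'' side, $Z$ is compact and connected with $Z=Z^+$, so $\mathbb R^3\setminus Z$ is connected by Lemma \ref{plusprop}(iv); since $Z\subseteq V$ and $V$ is a small neighborhood of $Z$, the complement $V\setminus Z$ retracts (coarsely) onto a collar of the frontier of $Z$ inside $V$, which is connected because both $Z$ and $\mathbb R^3\setminus Z$ are connected. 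Hence $U=V\setminus Z$ has exactly two ends: one approaching $Z$ from outside, one approaching $\partial V$. I would make this rigorous using the $\check H^0$/Alexander-duality machinery of Section \ref{toolssec}: the number of ends of $U$ is controlled by $\check H^0$ of the relevant compact ``boundary'' pieces, and $\check H^0(Z)=\mathbb Z$ (connectedness) together with $\check H^0$ of the outer frontier being $\mathbb Z$ gives exactly two.

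Next, $H_2(U)\cong\mathbb Z$. Since $Z=Z^+$ and $V^+$-type arguments give $H_2(V)=0$, the long exact sequence of the pair $(V,U)$ — or equivalently a Mayer–Vietoris/Alexander-duality computation as in Lemmas \ref{mayervietoris}–\ref{alexanderduality} — relates $H_2(U)$ to $\check H^0$ and $\check H^1$ of $Z$. Concretely, $U$ sits between two ends, and a class in $H_2(U)$ is detected by a path from one end to the other, exactly as in the lemma characterizing generators of $H_2$ of a quasicylinder; the ``winding around $Z$'' gives a single $\mathbb Z$. One must check there is no extra $H_2$ coming from $\check H^1(Z)$ linking with handles: but a genuine $2$-cycle in $U$ bounding in $V$ on one side and capping off $Z$ on the other is determined by its intersection number with an arc joining the two ends, giving precisely $H_2(U)=\mathbb Z$.

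Finally, irreducibility: every embedded $2$-sphere $S\subseteq U\subseteq\mathbb R^3$ bounds a ball $B$ in $\mathbb R^3$ by the Schoenflies theorem (in the PL category after straightening via Lemma \ref{straightening}); I must show $B\subseteq U$. The ball $B$ is disjoint from $Z$ (if $B\supseteq Z$ then $S$ separates the two ends, contradicting $S$ being null-homotopic / $[S]=0\in H_2$; if $B$ meets $Z$ but doesn't contain it, use that $Z=Z^+$ so $\mathbb R^3\setminus Z$ is connected and $S\cap Z=\varnothing$ forces $Z$ entirely inside or outside $B$). Likewise $B$ must lie inside $V=V^+$ on the outer side. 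Hence $B\subseteq V\setminus Z=U$, so $U$ is irreducible. The main obstacle I anticipate is the end count and the $H_2$ computation: because $Z$ can be genuinely wild, one cannot argue with collar neighborhoods of a manifold boundary, and everything must be phrased via \v Cech cohomology and Alexander duality using $Z=Z^+$ and $V=V^+$ to kill the unwanted components and homology — the bookkeeping of which \v Cech group contributes to which end or to $H_2(U)$ is the delicate part.
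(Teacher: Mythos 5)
Your treatment of orientability, the computation $H_2(U)=\mathbb Z$, and the end count is essentially the paper's: one writes $\mathbb S^3\setminus U=Z\sqcup\bigl(\mathbb S^3\setminus(N^{\operatorname{inv}}_\epsilon(Z)^+)_0\bigr)$, observes that both pieces are connected compacta, and applies Lemma \ref{alexanderduality}. Your worry about an ``extra $H_2$ coming from $\check H^1(Z)$'' is a red herring --- Alexander duality identifies $H_2(U)$ with $\tilde{\check H}^0$ of the complement and nothing else --- and ``at most two ends'' then follows directly from $H_2(U)=\mathbb Z$, with no need for a collar of the frontier of $Z$ (which, as you note, cannot exist for wild $Z$).

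The genuine gap is in irreducibility. You must show that \emph{every} embedded sphere $S\subseteq U$ bounds a ball in $U$; there is no standing hypothesis that $[S]=0$ in $H_2(U)$. The dangerous case is precisely $Z\subseteq\operatorname{int}(S)$, where $S$ would represent a generator of $H_2(U)$, and you dismiss it by asserting it would ``contradict $S$ being null-homotopic / $[S]=0\in H_2$'' --- which assumes exactly what has to be ruled out. The claim is in fact false for a general compact $Z=Z^+$: if $Z$ were a point or a round ball, $U$ would contain spheres enclosing $Z$ that bound no ball in $U$, so some specific property of $Z$ must enter here. The paper's argument is: if $Z\subseteq\operatorname{int}(S)$, then $\operatorname{int}(S)$ is a ball sitting between $Z$ and $N^{\operatorname{inv}}_\epsilon(Z)^+$, so the restriction map $H^1(N^{\operatorname{inv}}_\epsilon(Z)^+)\to\check H^1(Z)$ factors through $H^1(\operatorname{int}(S))=0$ and hence has trivial image, contradicting the choice of $\epsilon$ in Definition \ref{Udef} (which makes $\mathbb Z_p$ act nontrivially on that image, so in particular the image is nonzero). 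This is the one place in the lemma where the cohomological nontriviality of $Z$ is actually used, and it is the step your proof is missing.
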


\begin{proof}
We have $\mathbb R^3\setminus U=Z\cup(\mathbb R^3\setminus(N^{\operatorname{inv}}_\epsilon(Z)^+)_0)$; the latter two sets are connected and disjoint, so by Lemma \ref{alexanderduality}, we have $H_2(U)=\mathbb Z$.

Suppose $F\subseteq U$ is a PL embedded sphere; then $\operatorname{int}(F)$ (the bounded component of $\mathbb R^3\setminus F$) is a ball \cite{alexander}.  If $Z\subseteq\operatorname{int}(F)$, then we have a factorization $H^1(N^{\operatorname{inv}}_\epsilon(Z)^+)\to H^1(\operatorname{int}(F))\to\check H^1(Z)$, which implies the composition is trivial, contradicting the definition of $U$.  Thus $Z\nsubseteq\operatorname{int}(F)$, so $\operatorname{int}(F)\subseteq U$, that is $F$ bounds ball in $U$.  Thus $U$ is irreducible.

Certainly $U$ has at least two ends, and since $H_2(U)=\mathbb Z$ it has at most two ends.

Since $U$ is an open subset of $\mathbb R^3$, it is orientable.
\end{proof}

Recall the definition of $(\mathcal S(U),\leq)$ from Section \ref{incompressiblesection}.  Certainly the action of $\mathbb Z_p$ on $U$ induces an action on $\mathcal S_{\operatorname{TOP}}(U)$.

\begin{lemma}\label{fixedsurface}
There exists an element $\mathfrak F\in\mathcal S(U)$ which is fixed by $\mathbb Z_p$.
\end{lemma}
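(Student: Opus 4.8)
The plan is to use the lattice structure established in Lemma~\ref{lattice} together with a finiteness property to produce a canonical, hence $\mathbb Z_p$-fixed, element. First I would pick an arbitrary PL embedded surface $G\subseteq U$ separating the two ends (such a surface exists since $H_2(U)=\mathbb Z$ and we can take a PL representative of a generator). Apply Lemma~\ref{maximalcompression} to $G$ to obtain an element $\mathfrak G_0\in\mathcal S(U)$. The group $\mathbb Z_p$ acts on $\mathcal S(U)$, but the key observation is that the $\mathbb Z_p$-orbit of $\mathfrak G_0$ should be \emph{finite}: indeed, any two surfaces in $\mathcal S(U)$ have a common upper bound (Lemma~\ref{existupperandlowerbounds}), and $G$ is compact, so its $\mathbb Z_p$-orbit $\mathbb Z_pG$ sits in a bounded region; compactness of $\mathbb Z_p$ forces the orbit of the isotopy class to be finite. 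More carefully, I would argue that since the action of $\mathbb Z_p$ on $U$ is continuous and $\mathbb Z_p$ is compact, the stabilizer of $\mathfrak G_0$ in $\mathbb Z_p$ is an open (hence finite-index) subgroup, so $\mathbb Z_p\mathfrak G_0$ is a finite set.

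Once I have a finite $\mathbb Z_p$-invariant subset $A=\mathbb Z_p\mathfrak G_0\subseteq\mathcal S(U)$, I would apply Lemma~\ref{lattice}: the lattice $\mathcal S(U)$ has finite joins, so $\mathfrak F:=\bigvee_{\mathfrak A\in A}\mathfrak A$ exists (iterate the binary join from Lemma~\ref{lattice}, using associativity of joins in a lattice, and noting Lemma~\ref{existupperandlowerbounds} guarantees the relevant sets $X(\cdot,\cdot)$ are nonempty). Because the join is characterized by a universal (order-theoretic) property and because $\mathbb Z_p$ acts on $\mathcal S(U)$ by order-automorphisms (the relation $\leq$ is defined purely in terms of the topology of $U$, which $\mathbb Z_p$ preserves) and permutes the finite set $A$, the element $\mathfrak F$ is fixed by $\mathbb Z_p$: for $\alpha\in\mathbb Z_p$, $\alpha\mathfrak F=\bigvee_{\mathfrak A\in A}\alpha\mathfrak A=\bigvee_{\mathfrak A\in A}\mathfrak A=\mathfrak F$, since $\alpha$ permutes $A$ and an order-automorphism commutes with taking joins.

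The main obstacle I anticipate is establishing that the $\mathbb Z_p$-orbit of $\mathfrak G_0$ (or of any single element of $\mathcal S(U)$) is finite. The action $\mathbb Z_p\to\operatorname{Homeo}(U)$ is continuous, but a priori a homeomorphism close to the identity could still move an isotopy class; what one needs is that for a fixed compact surface representative $G$, there is an open neighborhood of the identity in $\operatorname{Homeo}(U)$ all of whose elements send $G$ to an isotopic surface. This follows because a homeomorphism sufficiently $C^0$-close to the identity (uniformly on the compact set $G$ and on a bicollar of $G$) carries $G$ into a bicollar of $G$ and is therefore isotopic to the identity near $G$, hence sends $[G]$ to itself; pulling this back through the continuous map $\mathbb Z_p\to\operatorname{Homeo}(U)$ gives an open, hence finite-index, subgroup of $\mathbb Z_p$ fixing $\mathfrak G_0$. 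I would need to be somewhat careful that the representative $G$ can be taken compact and that ``isotopic after a small perturbation'' is legitimate here (this is where Lemma~\ref{homotopicisotopic} or a direct bicollar argument enters), but no deep input beyond the lattice machinery of Section~\ref{incompressiblesection} is required. Everything else — the existence of joins, the order-equivariance of the $\mathbb Z_p$-action, and the conclusion that the join is fixed — is formal.
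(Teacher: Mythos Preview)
Your proposal is correct and follows essentially the same approach as the paper: show that $\mathbb Z_p$ acts on $\mathcal S(U)$ with finite orbits (the paper phrases this as ``for sufficiently large $k$, $\alpha F$ is homotopic to $F$ for all $\alpha\in p^k\mathbb Z_p$'', which is your stabilizer-is-open argument), then take the least upper bound of an orbit in the lattice $\mathcal S(U)$. Your bicollar justification for the finiteness of orbits is exactly the content behind the paper's one-line claim, and the rest is the same formal lattice argument.
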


\begin{proof}
Observe that if $F$ is any surface in $U$, then for sufficiently large $k$, we have that $\alpha F$ is homotopic to $F$ for all $\alpha\in p^k\mathbb Z_p$.  Thus $\mathbb Z_p$ acts on $\mathcal S(U)$ with finite orbits.  Now any group acting with finite orbits on a nonempty lattice has a fixed point, namely the least upper bound of any orbit.  Recall that $\mathcal S(U)$ is nonempty by Remark \ref{nonempty}.
\end{proof}

\begin{definition}
Fix a PL $\pi_1$-injective surface $F\subseteq U$ such that $[F]\in\mathcal S(U)$ is fixed by $\mathbb Z_p$.  Denote by $\operatorname{int}(F)$ and $\operatorname{ext}(F)$ the two connected components of $\mathbb R^3\setminus F$.
\end{definition}

\begin{lemma}\label{coherentactions}
The $\mathbb Z_p$ action on $U$ induces a homomorphism $\mathbb Z_p\to\operatorname{MCG}(F)$, as well as actions on all the homology and cohomology groups appearing in (\ref{h1action})--(\ref{ccc}).  These actions are compatible with the maps in (\ref{h1action})--(\ref{ccc}), as well as with the map $\operatorname{MCG}(F)\to\operatorname{Aut}(H_1(F))$.
\begin{equation}\label{h1action}
\begin{CD}
H^1(N^{\operatorname{inv}}_\epsilon(Z)^+)@>>>H^1(\operatorname{int}(F))@>>>\check H^1(Z)\cr
@.@VVV\cr
@.H^1(F)
\end{CD}
\end{equation}
\begin{align}
\label{aaa}H_1(Z)&\to H_1(\operatorname{int}(F))\\
\label{bbb}H_1(M\setminus N^{\operatorname{inv}}_\epsilon(Z)^+)&\to H_1(\operatorname{ext}(F))\\
\label{ccc}H_1(F)&\xrightarrow\sim H_1(\operatorname{int}(F))\oplus H_1(\operatorname{ext}(F))
\end{align}
\end{lemma}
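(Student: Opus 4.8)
The plan is to observe that every object in the statement is constructed \emph{naturally} from either the pair $(M, \mathbb Z_p)$ or the $\mathbb Z_p$-fixed isotopy class $[F] \in \mathcal S(U)$, so the only real content is (a) producing a well-defined homomorphism $\mathbb Z_p \to \operatorname{MCG}(F)$ from the fact that $[F]$ is merely \emph{fixed} (not pointwise fixed), and (b) checking that all the displayed maps are built out of $\mathbb Z_p$-equivariant inclusions and Mayer--Vietoris/Alexander-duality isomorphisms, all of which we already know to be equivariant. I expect step (a) to be the only subtle point.

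First I would construct the homomorphism $\mathbb Z_p \to \operatorname{MCG}(F)$. For each $\alpha \in \mathbb Z_p$, the homeomorphism $\alpha\colon U \to U$ carries $F$ to $\alpha F$, and since $[\alpha F] = \alpha[F] = [F]$ in $\mathcal S_{\operatorname{TOP}}(U)$, the surfaces $F$ and $\alpha F$ are homotopic; since $U$ is irreducible and $F$ is $\pi_1$-injective, Lemma \ref{homotopicisotopic} gives a compactly supported isotopy $h_\alpha$ of $U$ with $h_\alpha(\alpha F) = F$. Then $h_\alpha \circ \alpha$ is a homeomorphism of $U$ preserving $F$, hence restricts to a homeomorphism of $F$, giving a class in $\operatorname{MCG}(F)$. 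The key point is \emph{well-definedness}: the isotopy $h_\alpha$ is not unique, but any two choices differ by a homotopy of $F$ to itself, and by Lemma \ref{uniquemcg} (applied to $M = U$, which is a quasicylinder) such a homotopy induces the trivial element of $\operatorname{MCG}(F)$. Thus the class of $h_\alpha \circ \alpha|_F$ in $\operatorname{MCG}(F)$ is independent of all choices. Homomorphism property: for $\alpha, \beta$ one checks $h_{\alpha\beta}\circ(\alpha\beta)$ and $(h_\alpha\circ\alpha)(h_\beta\circ\beta)$ both restrict to $F$ and differ by a homotopy of $F$ to itself, so induce the same mapping class by Lemma \ref{uniquemcg} again.

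Next I would record the actions on the homology/cohomology groups. The maps in \eqref{h1action}--\eqref{ccc} are each induced by an inclusion of $\mathbb Z_p$-invariant subsets of $\mathbb R^3$ (namely $Z \subseteq \operatorname{int}(F)$ up to the choices above — more precisely $\operatorname{int}(F)$ is $\mathbb Z_p$-invariant up to isotopy, which is exactly what lets $\mathbb Z_p$ act on $H^1(\operatorname{int}(F))$ etc.), together with the Mayer--Vietoris isomorphism $H_1(F) \xrightarrow{\sim} H_1(\operatorname{int}(F)) \oplus H_1(\operatorname{ext}(F))$ (using $F = \operatorname{int}(F) \cap \operatorname{ext}(F)$ up to neighborhoods, and that $\mathbb R^3$ has trivial $H_1, H_2$), and the \v Cech continuity isomorphism $\check H^1(Z) = \varinjlim H^1(N^{\operatorname{inv}}_\epsilon(Z)^+)$ from the discussion after Lemma \ref{KLcohomology}. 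For each $\alpha \in \mathbb Z_p$ one uses $h_\alpha \circ \alpha$ in place of $\alpha$: this genuinely preserves $F$, $\operatorname{int}(F)$, and $\operatorname{ext}(F)$ on the nose, so it induces honest automorphisms of $H_1(F)$, $H_1(\operatorname{int}(F))$, $H_1(\operatorname{ext}(F))$, $H^1(\operatorname{int}(F))$, $H^1(F)$ commuting with all the inclusion-induced maps; compatibility with $H^1(N^{\operatorname{inv}}_\epsilon(Z)^+) \to H^1(\operatorname{int}(F))$ holds because $h_\alpha$ can be taken supported away from a neighborhood of $Z$ (it is compactly supported in $U$, which is disjoint from $Z$), so $h_\alpha \circ \alpha$ agrees with $\alpha$ near $Z$. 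That these automorphisms are again independent of the choice of $h_\alpha$, and assemble into $\mathbb Z_p$-actions, follows once more from Lemma \ref{uniquemcg} together with the fact that a homotopy of $F$ supported in a small neighborhood induces the identity on the homology of $\operatorname{int}(F)$ and $\operatorname{ext}(F)$ as well. Compatibility of $\mathbb Z_p \to \operatorname{MCG}(F)$ with $\operatorname{MCG}(F) \to \operatorname{Aut}(H_1(F))$ is then immediate, since both are computed from the same representative $h_\alpha \circ \alpha|_F$.

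The main obstacle is purely the well-definedness bookkeeping around the non-canonical isotopies $h_\alpha$: one must consistently use Lemma \ref{uniquemcg} to kill the ambiguity at the level of $\operatorname{MCG}(F)$ and then propagate this to all the homology groups, being careful that $h_\alpha$ is chosen compactly supported in $U$ so that it does not interfere with the part of the picture near $Z$. Everything else — the equivariance of Mayer--Vietoris, of Alexander duality, of the \v Cech direct limit — is already established in Sections \ref{incompressiblesection}--\ref{toolssec} and in the proof of Lemma \ref{KLcohomology}, so no new topology is needed.
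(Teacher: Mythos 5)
Your proposal is correct and follows essentially the same route as the paper: correct each $\alpha\in\mathbb Z_p$ by a compactly supported isotopy of $U$ carrying $\alpha F$ back to $F$ (via Lemma \ref{homotopicisotopic}), then use Lemma \ref{uniquemcg} to kill the ambiguity in the choice of isotopy, both for well-definedness and for the homomorphism property, and exploit the supports of the isotopies for compatibility with the maps near $Z$ and near infinity. The paper packages the well-definedness and homomorphism checks into a single explicit computation, but the content is identical to yours.
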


\begin{proof}
For every $\alpha\in\mathbb Z_p$, we know by Lemma \ref{homotopicisotopic} that there is a compactly supported isotopy of $U$ sending $\alpha F$ to $F$.  Now such an isotopy can clearly be extended to all of $M$ as the constant isotopy on $M\setminus U$.  Thus for every $\alpha\in\mathbb Z_p$, let us pick an isotopy:
\begin{equation}
\psi_\alpha^t:M\to M\qquad(t\in[0,1])
\end{equation}
so that $\psi_\alpha^0=\operatorname{id}_M$, $\psi_\alpha^1(\alpha F)=F$, and $\psi_\alpha^t(x)=x$ for $x\in M\setminus U$.  Denote by $T_\alpha:M\to M$ the action of $\alpha\in\mathbb Z_p$.

Now observe that for every $\alpha\in\mathbb Z_p$, the map $\psi_\alpha^1\circ T_\alpha$ fixes $Z$, $N^{\operatorname{inv}}_\epsilon(Z)^+$, and $F$.  Thus the map $\psi_\alpha^1\circ T_\alpha$ induces an automorphism of each of the diagrams (\ref{h1action})--(\ref{ccc}) and gives a compatible element of $\operatorname{MCG}(F)$.  It remains only to show that this is a homomorphism from $\mathbb Z_p$.

We need to show that for all $\alpha,\beta\in\mathbb Z_p$, the two maps $\psi_\beta^1\circ T_\beta\circ\psi_\alpha^1\circ T_\alpha$ and $\psi_{\beta\alpha}^1\circ T_{\beta\alpha}$ induce the same action on (\ref{h1action})--(\ref{ccc}) and give the same element of $\operatorname{MCG}(F)$.  It of course suffices to show that $\psi_\beta^1\circ T_\beta\circ\psi_\alpha^1\circ T_\alpha\circ(\psi_{\beta\alpha}^1\circ T_{\beta\alpha})^{-1}$ induces the trivial action on (\ref{h1action})--(\ref{ccc}) and gives the trivial element of $\operatorname{MCG}(F)$.  Now we write:
\begin{align}\label{needstobetrivial}
\psi_\beta^1\circ T_\beta\circ\psi_\alpha^1\circ T_\alpha\circ(\psi_{\beta\alpha}^1\circ T_{\beta\alpha})^{-1}
&=\psi_\beta^1\circ T_\beta\circ\psi_\alpha^1\circ T_\alpha\circ T_{\beta\alpha}^{-1}\circ(\psi_{\beta\alpha}^1)^{-1}\cr
&=\psi_\beta^1\circ T_\beta\circ\psi_\alpha^1\circ T_\beta^{-1}\circ(\psi_{\beta\alpha}^1)^{-1}
\end{align}
This is the identity map on $Z$ and on $M\setminus N^{\operatorname{inv}}_\epsilon(Z)^+$, so the action on their (co)homology is trivial.  The map (\ref{needstobetrivial}) is isotopic to the identity map via $\psi_\beta^t\circ T_\beta\circ\psi_\alpha^t\circ T_\beta^{-1}\circ(\psi_{\beta\alpha}^t)^{-1}$ for $t\in[0,1]$, which only moves points in a compact subset of $U$.  Thus the action on the cohomology of $N^{\operatorname{inv}}_\epsilon(Z)^+$ is trivial as well.  By Lemma \ref{uniquemcg}, this isotopy induces the trivial element in $\operatorname{MCG}(F)$, and this means that the action on the (co)homology of $F$, $\operatorname{int}(F)$, and $\operatorname{ext}(F)$ is trivial as well.
\end{proof}

\begin{lemma}\label{mcgimage}
The map $\mathbb Z_p\to\operatorname{MCG}(F)$ annihilates an open subgroup of $\mathbb Z_p$ and has nontrivial image.
\end{lemma}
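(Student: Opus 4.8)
The plan is to establish the two assertions separately, both by reduction to properties already in hand. For the first assertion, that $\mathbb Z_p\to\operatorname{MCG}(F)$ annihilates an open subgroup: as observed in the proof of Lemma \ref{fixedsurface}, for sufficiently large $k$ every $\alpha\in p^k\mathbb Z_p$ moves $F$ to a homotopic surface via an ambient isotopy supported in a \emph{small} neighborhood of $F$ (since the action of $p^k\mathbb Z_p$ tends to the identity as $k\to\infty$, and one can arrange the straightening isotopy of Lemma \ref{homotopicisotopic} to be small). Composing $T_\alpha$ with such a small isotopy gives a homeomorphism of $M$ fixing $F$ which is isotopic to the identity \emph{through maps fixing nothing a priori}, but the point is that the resulting self-homeomorphism of $F$ is isotopic to the identity in $F$ — equivalently, the composite homotopy of $F$ to itself inside $U$ is, by Lemma \ref{uniquemcg}, trivial in $\operatorname{MCG}(F)$. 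Hence $p^k\mathbb Z_p$ lies in the kernel of $\mathbb Z_p\to\operatorname{MCG}(F)$, which is therefore open.

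For the second assertion, that the image is nontrivial, I would argue by contradiction using the commuting diagram (\ref{h1action}) together with the compatibility statement of Lemma \ref{coherentactions}. If $\mathbb Z_p\to\operatorname{MCG}(F)$ were trivial, then in particular the induced action on $H^1(F)$ would be trivial, and hence (by Lemma \ref{coherentactions}, using the map $\operatorname{MCG}(F)\to\operatorname{Aut}(H_1(F))$ and the compatibility with the decomposition (\ref{ccc})) the action on $H^1(\operatorname{int}(F))$ would be trivial as well. But by the definition of $U$ (Definition \ref{Udef}), the $\mathbb Z_p$-action on the image of $H^1(N^{\operatorname{inv}}_\epsilon(Z)^+)\to\check H^1(Z)$ is nontrivial, and this map factors through $H^1(\operatorname{int}(F))$ by the top row of (\ref{h1action}). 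An equivariant factorization of a map with nontrivial source-action of the image forces the action on the intermediate group $H^1(\operatorname{int}(F))$ to be nontrivial — a contradiction.

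The main obstacle I anticipate is the first assertion, specifically making rigorous that the straightening isotopy of Lemma \ref{homotopicisotopic} applied to the nearly-trivial surface $\alpha F$ (for $\alpha$ deep in $\mathbb Z_p$) can be taken small, so that the composite homotopy of $F$ to itself genuinely lands in the hypotheses of Lemma \ref{uniquemcg}; but in fact Lemma \ref{uniquemcg} is stated with no smallness hypothesis — \emph{any} homotopy of $F$ to itself in the quasicylinder $U$ induces the trivial mapping class — so once one knows that for large $k$ and $\alpha\in p^k\mathbb Z_p$ the surface $\alpha F$ is homotopic to $F$ (which is exactly what was used in Lemma \ref{fixedsurface}), the conclusion is immediate: the self-homeomorphism of $F$ obtained by restricting $\psi_\alpha^1\circ T_\alpha$ represents the mapping class induced by this homotopy, which is trivial. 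So the genuine content is entirely contained in Lemmas \ref{homotopicisotopic} and \ref{uniquemcg} plus the definition of $U$, and the proof is a short assembly of these pieces.
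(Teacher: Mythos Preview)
Your proposal is correct and follows essentially the same route as the paper. For the first assertion, the paper simply notes that continuity gives $\alpha F$ homotopic to $F$ in $U$ for $\alpha\in p^k\mathbb Z_p$ with $k$ large, which (via Lemma \ref{uniquemcg}, exactly as you say) forces the mapping class to be trivial; for the second, the paper also factors through $H^1(\operatorname{int}(F))$ using diagram (\ref{h1action}) and Definition \ref{Udef}, the only cosmetic difference being that the paper deduces injectivity of $H^1(\operatorname{int}(F))\to H^1(F)$ from the Mayer--Vietoris sequence for $\mathbb R^3=\operatorname{int}(F)\cup_F\operatorname{ext}(F)$ rather than from the splitting (\ref{ccc}).
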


\begin{proof}
The action of $\mathbb Z_p$ on $U$ is continuous, so for sufficiently large $k$, we have that $\alpha F$ and $F$ are homotopic as maps $F\to U$ for all $\alpha\in p^k\mathbb Z_p$.  Thus the homomorphism $\mathbb Z_p\to\operatorname{MCG}(F)$ annihilates a neighborhood of the identity in $\mathbb Z_p$.

To prove that the image is nontrivial, it suffices to show that the $\mathbb Z_p$ action on $H^1(F)$ is nontrivial.  For this, consider equation (\ref{h1action}).  By Definition \ref{Udef}, there exists an element of $H^1(N^{\operatorname{inv}}_\epsilon(Z)^+)$ whose image in $\check H^1(Z)$ is not fixed by $\mathbb Z_p$.  Thus the action of $\mathbb Z_p$ on $H^1(\operatorname{int}(F))$ is nontrivial.  Now the vertical map $H^1(\operatorname{int}(F))\to H^1(F)$ is injective (otherwise the Mayer--Vietoris sequence applied to $\mathbb R^3=\operatorname{int}(F)\cup_F\operatorname{ext}(F)$ would imply $H^1(\mathbb R^3)\ne 0$).  Thus the action of $\mathbb Z_p$ on $H^1(F)$ is nontrivial as well.
\end{proof}

\begin{lemma}\label{fixedhaslargeform}
There is a rank four submodule of $H_1(F)^{\mathbb Z_p}$ on which the intersection form is:
\begin{equation}\label{sympform}
\left(\begin{matrix}0&1\cr-1&0\end{matrix}\right)\oplus\left(\begin{matrix}0&1\cr-1&0\end{matrix}\right)
\end{equation}
\end{lemma}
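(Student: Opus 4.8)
The plan is to exhibit four explicit $\mathbb Z_p$-fixed classes in $H_1(F)$ whose Gram matrix for the intersection form is the matrix (\ref{sympform}); since that matrix is unimodular, the four classes are automatically $\mathbb Z$-linearly independent and span a rank four submodule carrying exactly that form. Two of them (call them $\beta_1,\beta_2$) will be the core curves of the genus-two handlebody inside $Z$, viewed in $H_1(\operatorname{int}(F))$ via (\ref{aaa}); the other two (call them $\alpha_1,\alpha_2$) will be dual ``meridians,'' viewed in $H_1(\operatorname{ext}(F))$ via (\ref{bbb}). The structural input I would use is that, through the isomorphism (\ref{ccc}), $H_1(\operatorname{int}(F))$ and $H_1(\operatorname{ext}(F))$ sit inside $H_1(F)$ as complementary Lagrangian subgroups perfectly paired by the intersection form — the standard picture for a closed surface separating $\mathbb R^3$ (from Mayer--Vietoris for $\mathbb R^3=\operatorname{int}(F)\cup_F\operatorname{ext}(F)$ together with Poincar\'e--Lefschetz duality on $F$), the pairing between the two Lagrangians being the linking pairing in $\mathbb R^3$ — and that all of this is $\mathbb Z_p$-equivariant by Lemma \ref{coherentactions}.

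To build the interior classes I would choose, inside the open handlebody $K_0^\circ\subseteq K\subseteq Z\subseteq\operatorname{int}(F)$, a genus-two handlebody $K_0'$ whose inclusion $K_0'\hookrightarrow K_0^\circ$ is a homotopy equivalence and whose Euclidean $\eta$-neighborhood still lies in $K_0^\circ$. Letting $a_1',a_2'$ be its core curves, set $\beta_i\in H_1(\operatorname{int}(F))$ equal to the image of $[a_i']$. These are $\mathbb Z_p$-fixed: by Lemma \ref{coherentactions} the map $\psi_\alpha^1\circ T_\alpha$ fixes $Z$ pointwise, so $\alpha\cdot\beta_i=[\alpha a_i']$ in $H_1(\operatorname{int}(F))$, and since $d_{\mathbb R^3}(x,\alpha x)\le\eta$ by Step 1(ii) the straight-line homotopy from $a_i'$ to $\alpha a_i'$ sweeps out a region inside the Euclidean $\eta$-neighborhood of $K_0'$, hence inside $\operatorname{int}(F)$, giving $[\alpha a_i']=[a_i']$. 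For the meridians I would take $K_0$ (as one is free to do in Definition \ref{Kdef}) to be a thin, centrally placed handlebody in $B(1)$, and pick disjoint PL circles $\mu_1,\mu_2\subseteq M$ with $\mu_j$ linking the $j$-th handle of $K_0$ once and the other handle zero times, at Euclidean distance $\gg\eta$ from $Z$. Since $Z$ lies within Euclidean distance $O(\eta)$ of $K_0$ (immediate from Lemma \ref{constructL} and the $\eta$-closeness of the action), and $N^{\operatorname{inv}}_\epsilon(Z)^+$ lies within Euclidean distance $O(\eta)$ of $Z$ (comparing $d_{\operatorname{inv}}$ with $d_{\mathbb R^3}$ and using $\epsilon<\eta$, together with $Z=Z^+$), both the $\mu_j$ and the straight-line homotopies from $\mu_j$ to $\alpha\mu_j$ lie in the unbounded component $M\setminus N^{\operatorname{inv}}_\epsilon(Z)^+$; hence $[\mu_j]\in H_1(M\setminus N^{\operatorname{inv}}_\epsilon(Z)^+)$ is $\mathbb Z_p$-fixed, and so is its image $\alpha_j\in H_1(\operatorname{ext}(F))$ under (\ref{bbb}).

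It then remains to compute. The $\beta_i$ lie in the Lagrangian $H_1(\operatorname{int}(F))$ and the $\alpha_j$ in the complementary Lagrangian $H_1(\operatorname{ext}(F))$, so $\beta_i\cdot\beta_j=0=\alpha_i\cdot\alpha_j$; and $\beta_i\cdot\alpha_j$ equals $\operatorname{lk}_{\mathbb R^3}(a_i',\mu_j)=\delta_{ij}$, since $a_i'$ is isotopic in $\mathbb R^3$ to the $i$-th core of $K_0$. Thus in the ordered basis $(\beta_1,\alpha_1,\beta_2,\alpha_2)$ the intersection form is exactly (\ref{sympform}), and unimodularity forces these four $\mathbb Z_p$-fixed classes to span a rank four submodule of $H_1(F)^{\mathbb Z_p}$ carrying that form. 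I expect the main obstacle to be precisely the $\mathbb Z_p$-invariance of the interior classes: $\operatorname{int}(F)$ may be pinched arbitrarily close to $Z$, so the homotopy carrying $a_i'$ to $\alpha a_i'$ must not be allowed to leave $Z$ — which is exactly why the core handlebody $K_0'$ is taken deep inside $K_0^\circ$ rather than equal to $K_0$, and dually why the meridians must be pushed a definite distance away from $Z$.
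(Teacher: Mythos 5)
Your proposal is correct and follows essentially the same route as the paper: two core loops of the genus-two handlebody (fixed in homology because the $\mathbb Z_p$-action is $\eta$-close to the identity) and two dual meridians in $M\setminus N^{\operatorname{inv}}_\epsilon(Z)^+$, pushed through the maps (\ref{aaa}), (\ref{bbb}), (\ref{ccc}) and paired via the identification of the intersection form on $H_1(F)$ with the linking pairing between $H_1(\operatorname{int}(F))$ and $H_1(\operatorname{ext}(F))$. Your extra care about keeping the straight-line homotopies inside $K_0^\circ$ (resp.\ away from $N^{\operatorname{inv}}_\epsilon(Z)^+$) just makes explicit what the paper dismisses as ``easy to see.''
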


\begin{proof}
There are two obvious loops in $K_0$ (Definition \ref{Kdef}) generating its homology, and two obvious dual loops in $M\setminus N^{\operatorname{inv}}_\epsilon(Z)^+$.  Since the $\mathbb Z_p$ action is within $\eta=2^{-10}$ of the identity, it is easy to see that their classes in homology are fixed by $\mathbb Z_p$.  Thus we have well-defined classes $\alpha_1,\alpha_2\in H_1(Z)^{\mathbb Z_p}$ and $\beta_1,\beta_2\in H_1(M\setminus N^{\operatorname{inv}}_\epsilon(Z)^+)^{\mathbb Z_p}$ with linking numbers $\operatorname{lk}(\alpha_i,\beta_j)=\delta_{ij}$.  Now the maps from equations (\ref{aaa}), (\ref{bbb}), and the isomorphism (\ref{ccc}) give us corresponding elements $\alpha_1,\alpha_2,\beta_1,\beta_2\in H_1(F)^{\mathbb Z_p}$.  The intersection form on $H_1(F)$ coincides with the linking pairing between $H_1(\operatorname{int}(F))$ and $H_1(\operatorname{ext}(F))$.  Thus the intersection form on the submodule of $H_1(F)^{\mathbb Z_p}$ generated by $\alpha_1,\alpha_2,\beta_1,\beta_2$ is indeed given by (\ref{sympform}).
\end{proof}

By Lemma \ref{mcgimage}, the image of $\mathbb Z_p$ in $\operatorname{MCG}(F)$ is a nontrivial cyclic $p$-group.  It has a (unique) subgroup isomorphic to $\mathbb Z/p$, and by Lemma \ref{fixedhaslargeform} this subgroup $\mathbb Z/p\subseteq\operatorname{MCG}(F)$ has the property that $H_1(F)^{\mathbb Z/p}$ has a submodule on which the intersection form is given by (\ref{sympform}).  This contradicts Lemma \ref{homologyaction} below, and thus completes the proof of Theorem \ref{hs3}.
\end{proof}

\begin{remark}
One expects that if we make $U$ thick enough around most of $Z$ (but still thin near $L$), then away from $L$ the surface $F$ can be made to look like a punctured surface of genus two.  However, it is not clear how to prove this stronger, more geometric property about $F$.

If we could prove this, then the final analysis is more robust.  Instead of homology classes $\alpha_1,\alpha_2,\beta_1,\beta_2\in H_1(F)^{\mathbb Z_p}$, we now have simple closed curves $\alpha_1,\alpha_2,\beta_1,\beta_2$ in $F$ which are fixed up to isotopy by the $\mathbb Z_p$ action.  Now the finite image of $\mathbb Z_p$ in $\operatorname{MCG}(F)$ is realized as a subgroup of $\operatorname{Isom}(F,g)$ for some hyperbolic metric $g$ on $F$.  A hyperbolic isometry fixing $\alpha_1,\alpha_2,\beta_1,\beta_2$ fixes their unique length-minimizing geodesic representatives, and thus fixes their intersections, forcing the isometry to be the identity map.  This contradicts the nontriviality of the homomorphism $\mathbb Z_p\to\operatorname{MCG}(F)$.  As the reader will readily observe, it is enough that $\alpha_1,\beta_1$ be fixed, so we could let $K_0$ be a handlebody of genus one and this argument would go through.
\end{remark}

\section{Actions of $\mathbb Z/p$ on closed surfaces}

\begin{lemma}\label{homologyaction}
Let $F$ be a closed connected oriented surface, and let $\mathbb Z/p\subseteq\operatorname{MCG}(F)$ be some cyclic subgroup of prime order.  Then the intersection form of $F$ restricted to $H_1(F)^{\mathbb Z/p}$ and taken modulo $p$ has rank at most two.
\end{lemma}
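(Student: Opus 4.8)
The plan is to invoke the Nielsen realization theorem for cyclic groups: a finite cyclic subgroup $\mathbb Z/p\subseteq\operatorname{MCG}(F)$ is realized by an honest homeomorphism $f\colon F\to F$ of order $p$ (for surfaces this is classical, going back to Nielsen and Fenchel; one may also put a hyperbolic or conformal structure on $F$ making $f$ an isometry). So without loss of generality $\mathbb Z/p$ acts on $F$ by a genuine periodic homeomorphism $f$, and $H_1(F)^{\mathbb Z/p}$ is the fixed subspace of $f_\ast$ on $H_1(F)$. The quotient $F/\langle f\rangle$ is an orbifold; its underlying surface is a closed oriented surface $\bar F$ of some genus $\bar g$, and $f$ has finitely many fixed points, say $b$ of them (when $p$ is odd every orbit of $f$ has size $1$ or $p$, so the singular points are exactly these $b$ fixed points; when $p=2$ the same, with $b$ even).

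First I would identify $H_1(F)^{\mathbb Z/p}$ rationally (or mod $p$, after controlling the non-semisimple part of $f_\ast$ which is all that can differ between $\mathbb Q$ and $\mathbb F_p$ coefficients; since $f_\ast$ has order $p$ its $\mathbb F_p[\mathbb Z/p]$-module structure is governed by the single indecomposable $\mathbb F_p[t]/(t-1)^k$ for $k\le p$, and $\ker(f_\ast-1)$ mod $p$ has the same dimension as the rational fixed space unless there is a nontrivial unipotent block). The clean statement is via transfer: for the branched cover $F\to\bar F$, the transfer map shows $H_1(\bar F;\mathbb Q)\cong H_1(F;\mathbb Q)^{\mathbb Z/p}$, so $\dim_{\mathbb Q}H_1(F)^{\mathbb Z/p}=2\bar g$. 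Second, and this is the crux, I would show that the intersection form restricted to $H_1(F)^{\mathbb Z/p}$, reduced mod $p$, has rank at most $2$ — equivalently that the $\mathbb Z/p$-invariant symplectic subspace $H_1(F;\mathbb Q)^{\mathbb Z/p}$ has, with respect to the intersection form, a radical of corank at most $2$ after mod-$p$ reduction. The mechanism: the invariant classes are pulled back (up to the transfer normalization by $p$) from $H_1(\bar F)$, and the intersection pairing of two pulled-back classes $\pi^\ast a,\pi^\ast b$ equals $p\cdot(a\cdot b)$ by the degree-$p$ multiplicativity of the intersection number under the branched cover. Hence the intersection form on $H_1(F)^{\mathbb Z/p}$ is $p$ times the (perfect, rank $2\bar g$) intersection form on $H_1(\bar F)$ — so mod $p$ it is identically zero on the image of the transfer, which has full rank $2\bar g$. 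The only subtlety is that $H_1(F)^{\mathbb Z/p}$ (over $\mathbb Z$, then mod $p$) may be slightly larger than the image of $H_1(\bar F)$: the difference is detected by the $b$ branch points, and a careful bookkeeping (Riemann–Hurwitz for homology, or the exact sequence relating $H_1(F)^{\mathbb Z/p}$, $H_1(\bar F)$, and the cone points) shows that this extra part contributes at most rank $2$ to the mod-$p$ intersection form — concretely, one class "around" the branch locus and its dual.

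I expect the main obstacle to be precisely this last point: controlling the mod-$p$ reduction of the integral invariant lattice $H_1(F;\mathbb Z)^{\mathbb Z/p}$ near the branch points, rather than the rational fixed space, and pinning down why the non-pulled-back part contributes exactly rank $\le 2$ and not more. One clean way to organize it: let $\Sigma\subseteq\bar F$ be the branch set, $\bar F^\circ=\bar F\setminus\Sigma$, $F^\circ$ its preimage; then $F^\circ\to\bar F^\circ$ is an honest $\mathbb Z/p$-cover, transfer gives $H_1(F^\circ;\mathbb F_p)^{\mathbb Z/p}$, and one compares with $H_1(F;\mathbb F_p)^{\mathbb Z/p}$ using the long exact sequences of the pairs $(F,F^\circ)$ and $(\bar F,\bar F^\circ)$; the boundary circles around punctures are permuted trivially (they're fixed-point neighborhoods), contributing a controlled number of invariant classes, but in $H_1(F)$ these peripheral classes become null-homologous except for one linear relation, which is where the "rank $\le 2$" comes from. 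Alternatively — and this may be the shortest route — invoke the Nielsen classification of periodic mapping classes directly (Nielsen's data: $\bar g$, $p$, and the rotation numbers at the fixed points) and read off the action on $H_1$ from the induced action on $H_1$ of the mapping torus or from the standard model of a $\mathbb Z/p$-action on a surface, for which the fixed homology and its intersection form are explicitly computable and visibly have the claimed property.
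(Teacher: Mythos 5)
Your setup (Nielsen realization, the quotient orbifold, the transfer) matches the paper's starting point, and your key computation is correct as far as it goes: $\tau(a)\cdot\tau(b)=a\cdot\pi_*\tau(b)=p\,(a\cdot b)$, so the intersection form is divisible by $p$ on the sublattice $\operatorname{im}(\tau)\subseteq\Lambda:=H_1(F;\mathbb Z)^{\mathbb Z/p}$. But the argument has a genuine gap at exactly the point you flag, and that point is the entire content of the lemma. Divisibility of the form on a finite-index sublattice does not bound the mod-$p$ rank on $\Lambda$ itself: if $[\Lambda:\operatorname{im}(\tau)]=p^k$, then in a basis of $\Lambda$ adapted to the sublattice the form is only guaranteed to vanish mod $p$ on a subspace of codimension $k$, so its mod-$p$ rank is bounded only by $2k$. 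You would therefore need to prove $k\leq 1$ (or otherwise pin down the overlattice), and no argument for this is given. Moreover, your heuristic for the discrepancy --- that it is ``detected by the branch points,'' contributing ``one class around the branch locus and its dual'' --- points in the wrong direction. In the unbranched case there are no branch points, yet $\Lambda$ still exceeds $\operatorname{im}(\tau)$ by index $p$: if $\ell$ is dual to the class $\alpha\in H^1(\bar F;\mathbb Z/p)$ classifying the cover, its $p$ lifts are mutually homologous upstairs and $\tau(\ell)=p\tilde\ell$, and the pair $(\tilde\ell,\tilde m)$ (with $\tilde m$ the connected lift of a dual curve) is precisely the unimodular rank-two block. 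Conversely, the paper's refined computation (\ref{fineintersectionformresult}) shows that when branch points \emph{are} present the mod-$p$ rank is $0$, not $2$. So the peripheral/branch-point bookkeeping you sketch would not produce the rank-two piece, and the piece that does occur is not accounted for.

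The paper closes this gap by normal form rather than by transfer: using transitivity of $\operatorname{Sp}(2g,\mathbb Z/p)$ (resp.\ of the relative mapping class group) on the possible classifying classes $\alpha$, it reduces to $\alpha$ dual to a nonseparating simple closed curve (resp.\ a simple arc between two cone points, after a connected-sum reduction to the cases $g=0$ and $n=2$), constructs the cover explicitly by cutting and cyclically gluing $p$ copies, and reads off the invariant lattice with its form, namely $\left(\begin{smallmatrix}0&p\cr-p&0\end{smallmatrix}\right)^{\oplus(g-1)}\oplus\left(\begin{smallmatrix}0&1\cr-1&0\end{smallmatrix}\right)$ in the unbranched case and $\left(\begin{smallmatrix}0&p\cr-p&0\end{smallmatrix}\right)^{\oplus g}$ otherwise. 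Your closing suggestion --- reading the answer off the standard models from the Nielsen classification --- is essentially this route; to make your proposal into a proof you would either carry that computation out, or else prove directly that $[\Lambda:\operatorname{im}(\tau)]\leq p$ and identify the extra generator and its pairing.
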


The proof below follows Symonds \cite[pp389--390 Theorem B]{symonds} and Chen--Glover--Jensen \cite[Theorem 1.1]{chengloverjensen}, who make explicit a classification due to Nielsen \cite{nielsen2} (in fact, Symonds' result actually implies ours for $p>2$).

\begin{proof}
We seek simply to classify all subgroups $\mathbb Z/p\subseteq\operatorname{MCG}(F)$ and prove that in each case, the conclusion of the lemma is satisfied.  This classification is essentially due to Nielsen \cite{nielsen2}, who showed that finite cyclic subgroups $G\subseteq\operatorname{MCG}(F)$ are classifed up to conjugacy by their ``fixed point data''.

By work of Nielsen \cite{nielsen}, any $\mathbb Z/p\subseteq\operatorname{MCG}(F)$ is realized by a genuine action of $\mathbb Z/p$ on $F$ by isometries in some metric (see Thurston \cite{thurston} or Kerckhoff \cite{kerckhoff} for a modern perspective on this fact and its generalization to any finite group).   Now let us switch notation and write $\tilde S=F$, $\mathcal S=F/(\mathbb Z/p)$ (the quotient orbifold), and $S$ for the coarse space of $\mathcal S$.  The cover $\tilde S\to\mathcal S$ is classified by an element $\alpha\in H^1(\mathcal S,\mathbb Z/p)$, which is nonzero since $\tilde S$ is connected.  Let $g$ denote the genus of $S$, and $n$ the number of orbifold points of $\mathcal S$.

Now for $(\tilde S,\mathcal S,\alpha,g,n)$ as above, we prove the following more precise statement (which certainly implies the lemma):
\begin{equation}\label{fineintersectionformresult}
\text{the intersection form on }H_1(\tilde S)^{\mathbb Z/p}\cong
\begin{cases}
\left(\begin{smallmatrix}0&p\cr-p&0\end{smallmatrix}\right)^{\oplus(g-1)}\oplus\left(\begin{smallmatrix}0&1\cr-1&0\end{smallmatrix}\right)&n=0\cr
\left(\begin{smallmatrix}0&p\cr-p&0\end{smallmatrix}\right)^{\oplus g}&n>0
\end{cases}
\end{equation}

First, let us treat the case $n=0$ (so $\mathcal S=S$).  It is well-known that both compositions $\operatorname{MCG}(\Sigma_g)\to\operatorname{Sp}(2g,\mathbb Z)\to\operatorname{Sp}(2g,\mathbb Z/p)$ are surjective, and that $\operatorname{Sp}(2g,\mathbb Z/p)$ acts transitively on the nonzero elements of $(\mathbb Z/p)^{\oplus 2g}$.  Thus without loss of generality, it suffices to consider one particular nonzero $\alpha\in H^1(S,\mathbb Z/p)$.  Thus, let us suppose that $\alpha$ is the Poincar\'e dual of a nonseparating simple closed curve $\ell$ on $S$.  Then the cover $\tilde S$ is obtained by cutting $S$ along $\ell$ and gluing together in a circle $p$ copies of the resulting cut surface.  Now one easily observes that there is a direct sum decomposition respecting intersection forms $H_1(\tilde S)\cong H_1(\Sigma_1)\oplus H_1(\Sigma_{g-1})^{\oplus p}$, where $\mathbb Z/p$ acts trivially on $H_1(\Sigma_1)$ and acts by rotation of the summands on $H_1(\Sigma_{g-1})^{\oplus p}$.  Then $H_1(\tilde S)^{\mathbb Z/p}$ is $H_1(\Sigma_1)$ with its usual intersection form, plus a copy of $H_1(\Sigma_{g-1})$ with its intersection form multiplied by $p$.  Hence equation (\ref{fineintersectionformresult}) holds in the case $n=0$.

Now, let us treat the case $n>0$.  Label the orbifold points $p_1,\ldots,p_n\in\mathcal S$, and define $r_i:H^1(\mathcal S,\mathbb Z/p)\to\mathbb Z/p$ to be the evaluation on a small loop around $p_i$.  By construction, the orbifold points are precisely the ramification points of $\tilde S\to S$, and hence $r_i(\alpha)\ne 0$ for all $i$.  On the other hand, if $\mathcal D\subseteq\mathcal S$ is a disk containing all $n$ orbifold points, then $\partial\mathcal D=\partial(\mathcal S\setminus\mathcal D)$ is null-homologous in $\mathcal S$, so $\sum_{i=1}^nr_i(\alpha)=0$.  Thus in particular $n\geq 2$.

Next, let us show how to reduce to the case where either $g=0$ or $n=2$.  So, suppose $g\geq 1$ and $n\geq 3$, and let $\mathcal D\subseteq\mathcal S$ be a disk containing $p_1,\ldots,p_{n-1}$.  Take $\mathcal D$ and $\mathcal S\setminus\mathcal D$ and glue in a disk with a single $\mathbb Z/p$ orbifold point to each, and call the resulting closed orbifolds $\mathcal S'$ and $\mathcal S''$ respectively.  The class $\alpha\in H^1(\mathcal S,\mathbb Z/p)$ naturally induces $\alpha'\in H^1(\mathcal S',\mathbb Z/p)$ and $\alpha''\in H^1(\mathcal S'',\mathbb Z/p)$, and so we get covers $\tilde S'$ and $\tilde S''$.  Since $\langle\alpha,\partial\mathcal D\rangle=\sum_{i=1}^{n-1}r_i(\alpha)=-r_n(\alpha)\ne 0$, it follows that $\partial\mathcal D$ lifts to a single separating loop in $\tilde S$, which exhibits $\tilde S$ as a connected sum of $\tilde S'$ and $\tilde S''$.  Hence $H_1(\tilde S)=H_1(\tilde S')\oplus H_1(\tilde S'')$ respecting the intersection form and the $\mathbb Z/p$ action.  Now $(g(S'),n(\mathcal S'))=(0,n(\mathcal S))$ and $(g(S''),n(\mathcal S''))=(g(S),2)$, so knowing equation (\ref{fineintersectionformresult}) for $\tilde S'$ and $\tilde S''$ implies it for $\tilde S$.  Hence it suffices to deal with the two cases $g=0$ and $n=2$.

Case $g=0$.  Embed a tree $T=(V,E)$ in $S$, where $V=\{p_1,\ldots,p_{n-1}\}$ and where no edge passes through $p_n$.  This gives a cell decomposition of $S$, which we lift to a $(\mathbb Z/p)$-equivariant cell decomposition of $\tilde S$.  Now let us consider the cellular chains computing $H_1(\tilde S)$.  There is exactly one $2$-cell, and it has zero boundary.  Thus $H_1(\tilde S)$ is simply the group of $1$-cycles.  Now a $1$-chain is fixed by $\mathbb Z/p$ iff for all $e\in E$ it assigns the same weight to each of the $p$ lifts of $e$.  Now it is easy to observe that since $T$ is a tree, such a $1$-chain cannot be a $1$-cycle unless it vanishes.  Thus $H_1(\tilde S)^{\mathbb Z/p}=0$, so equation (\ref{fineintersectionformresult}) holds in the case $g=0$.

Case $n=2$.  There is a natural exact sequence:
\begin{equation}
0\to H^1(S,\mathbb Z/p)\to H^1(\mathcal S,\mathbb Z/p)\xrightarrow{(r_1,r_2)}(\mathbb Z/p)^{\oplus 2}\xrightarrow{+}\mathbb Z/p\to 0
\end{equation}
Now, multiplying $\alpha$ by a nonzero element of $\mathbb Z/p$ yields an equivalent problem, so we assume without loss of generality that $\alpha\in A:=(r_1,r_2)^{-1}(1,-1)$.  We claim that $\operatorname{MCG}(\mathcal S\text{ rel }\{p_1,p_2\})$ acts transitively on $A$.  Let $[\ell]\in A$ be the Poincar\'e dual of a simple arc $\ell$ from $p_1$ to $p_2$.  We observed earlier that $\operatorname{MCG}(S\text{ rel }\ell)$ acts transitively on the nonzero elements of $H^1(S,\mathbb Z/p)$; hence $\operatorname{MCG}(\mathcal S\text{ rel }\{p_1,p_2\})$ acts transitively on $A\setminus\{[\ell]\}$ (by exactness).  On the other hand, since $g\geq 1$, there certainly exists $\gamma\in\operatorname{MCG}(\mathcal S\text{ rel }\{p_1,p_2\})$ for which $\gamma[\ell]-[\ell]\in H^1(S,\mathbb Z/p)$ is nonzero (for instance, $\gamma$ could be a Dehn twist around a nonseparating simple closed curve which intersects $\ell$ exactly once), and so $[\ell]$ is in the same orbit as $A\setminus\{[\ell]\}$.  Thus $\operatorname{MCG}(\mathcal S\text{ rel }\{p_1,p_2\})$ acts transitively on $A$.  Hence we may assume without loss of generality that $\alpha=[\ell]$.  Now we can see the cover $\tilde S\to S$ explicitly: we cut $S$ along $\ell$ and glue together $p$ copies in the relevant fashion.  One then easily observes that there is an isomorphism respecting intersection forms $H_1(\tilde S)\cong H_1(\Sigma_g)^{\oplus p}$, where $\mathbb Z/p$ acts by rotation of the summands.  Then $H_1(\tilde S)^{\mathbb Z/p}$ is a copy of $H_1(\Sigma_g)$ with its intersection form multiplied by $p$.  Hence equation (\ref{fineintersectionformresult}) holds in the case $n=2$ as well, and the proof is complete.
\end{proof}

\bibliographystyle{plain}
\bibliography{hilbertsmith}

\end{document}